\newcommand{\8}{\infty}
\theoremstyle{plain}
\newtheorem{thm}{Theorem}[section]
\newtheorem{defi}[thm]{Definition}
\newtheorem{lem}[thm]{Lemma}
\newtheorem{prop}[thm]{Proposition}
\newtheorem{remark}[thm]{Remark}
\theoremstyle{definition}
\theoremstyle{remark}
\numberwithin{equation}{section}
\newcommand{\R}{{\mathbb R}}
\newcommand{\N}{{\mathbb N}}
\newcommand{\Z}{{\mathbb Z}}
\newcommand{\M}{{\mathcal M}}
\newcommand{\bs}{\begin{split}}
\newcommand{\es}{\end{split}}
\newcommand{\be}{\begin{eqnarray*}}
\newcommand{\ee}{\end{eqnarray*}}
\newcommand{\beq}{\begin{align}}
\newcommand{\eeq}{\end{align}}
\def\Q{\mathcal{Q}}
\def\1{\mathbf{1}}
\def\I{\mathcal{F}}
\begin{document}

%
%
%
%
%
%
%
%
\setcounter{page}{1}
\title[Quantitative mean ergodic inequalities]
{Quantitative mean ergodic inequalities: power bounded operators acting on one single noncommutative $L_p$ space}

\author[G. Hong]{Guixiang Hong}
\address{
Institute for Advanced Study in Mathematics\\
 Harbin Institute of Technology\\
  Harbin 150001\\
  China}

\email{guixiang.hong@whu.edu.cn}
\author[W. Liu]{Wei Liu}
\address{
 School of Mathematical Sciences\\
 Fudan University\\
 Shanghai 200433\\
China}

\email{wliu\_math@fudan.edu.cn}
\author[B. Xu]{Bang Xu}
\address{Department of Mathematics\\
	University of Houston\\
	Houston TX 77204\\
	USA;
	Department of Mathematical Sciences\\
	Seoul National University\\
	Seoul 08826\\
	Republic of Korea}

\email{bangxu@whu.edu.cn}

\thanks{This work
 was partially supported by Natural Science Foundation of China (Grant: 12071355)}

\subjclass[2010]{Primary 46L52; Secondary 46L53, 46L51, 46L55}
\keywords{Mean ergodic theorems,  Noncommutative square functions, Noncommutative $L_{p}$ spaces}

\date{\today.
}
\begin{abstract}
In this paper, we establish the quantitative mean ergodic theorems for two subclasses of power bounded operators on a fixed noncommutative $L_p$-space with $1<p<\8$, which mainly concerns power bounded invertible operators and Lamperti contractions.  Our approach to the quantitative ergodic theorems relies on noncommutative square function inequalities. The establishment of the latter involves several new ingredients such as the almost orthogonality and Calder\'on-Zygmund arguments for non-smooth kernels from semi-commutative harmonic analysis, the extension properties of the operators under consideration from operator theory, and a noncommutative version of the classical transference method due to Coifman and Weiss. 




\end{abstract}

\maketitle

\section{Introduction}\label{introdu}
In classical ergodic theory, there are many works related to the convergence properties of certain averages along the orbits with respect to the transformations. Let $(X,\mathcal F,\mu)$ be a $\sigma$-finite measure space. The celebrated von Neumann mean ergodic theorem \cite{vonNeu32} states that when $T$ is a unitary operator on $L_2(X,\mu)$ induced by a $\mu$-preserving measurable transformation on $X$, the ergodic averages $M_{n}f$ defined by
\begin{equation}\label{ergodic-ave}
  M_{n}f(x)=\frac{1}{n+1}\sum_{k=0}^{n}T^{k}f(x)\ \ n\in\N,
\end{equation}
converges in $L_2(X)$ for any $f\in L_2(X)$. Later on, Riesz~\cite{Rie38} greatly generalized the von Neumann mean ergodic theorem; {he proved that the convergence of ergodic averages is also valid for contractive operators defined  simultaneously on all $L_{p}(X,\mu)$ ($1\leq p<\infty$) spaces, where $(X,\mu)$ is a probability space.} Also, Riesz~\cite{Rie41} gave a simple proof when $T$ is a contraction operator on some Hilbert space.  Furthermore, the mean ergodic theorem for contraction operators acting on general $L_p(X,\mu)$ spaces for all $1\le p\le\8$ was established by Dunford and Schwartz ~\cite[VIII]{DS58}.

It is then natural to ask for the speed of the convergence of the ergodic averages. Unfortunately,  Krengel~\cite{Kre} proved that the speed of the ergodic convergence can be arbitrarily slow. On the other hand, one can not capture any information on the rate of the convergence from the classical proofs of aforementioned works.  With the aid of the spectral theorem and the dilation theorem developed in~\cite{Sz-Fo70}, Jones, Ostrovskii and Rosenblatt~\cite{JOR96} established the square function inequalities for ergodic averages~\eqref{ergodic-ave} associated with a contraction on $L_2(X)$. More precisely, they proved that for a $L_2$-contraction $T$ and any sequence of finite positive integers $n_0<n_1<\cdots<n_m$,
\begin{align}\label{norm-vari}
  \Big(\sum_{i=1}^m\big\|M_{n_i}(T)f-M_{n_{i-1}}(T)f\big\|^2_{L_2(X)}\Big)^{1/2}\le 25\|f\|_{L_2(X)}.
\end{align}
This result can  be viewed as a quantitative and finer version of the mean ergodic theorem. Indeed, \eqref{norm-vari} implies that for any $\varepsilon>0$, the sequence $(M_n(f))_{n=1}^\infty$ admits at most $625(\varepsilon^{-2}\|f\|_2^2)$ jumps of size at least $\varepsilon$ in $L_2$ norm, and as a consequence the sequence converges. Moreover, many variants of the inequality~\eqref{norm-vari} have been obtained. Avigad and Rute~\cite{AR} extended~\eqref{norm-vari} with the power 2 of~\eqref{norm-vari} replaced by $q$ ($q\ge2$) to $q$-uniformly convex Banach spaces and specific power bounded operators. Bourgain~\cite{Bour89} considered the corresponding variational inequalities which deduces the pointwise convergence of ergodic averages immediately  and also the quantitative ergodic inequality (\ref{norm-vari}). For the latter, we refer the reader to \cite{JRW98, JRW03} due to Jones {\it et al} for more details. We remark that Calder{\'o}n's transference principle plays an important role in the above works which reduces~\eqref{norm-vari} to the study of the related operators in harmonic analysis where more tools are available.





Motivated by quantum physics, the convergence of the ergodic averages in von Neumann algebras has attracted much attention. For instance, Kov\'{a}cs and Sz\"{u}cs \cite{Ko-Sz} considered the mean ergodic theorem for an automorphism $T$ on von Neumann algebras equipped with a faithful $T$-invariant semifinite normal state, and Lance~\cite{Lan76} gave a complete discussion of this subject. Later on, Yeadon \cite{Ye, Yean80} established the mean ergodic theorem for positive Dunford-Schwartz operators on noncommutative $L_p$ spaces. For more results on the mean ergodic theorem in von Neumann algebras we refer the reader to~\cite{Ja1,Ja2}. On the other hand, on the study of pointwise ergodic theorems in noncommutative $L_p$, after the work in the case $p=\infty$ \cite{Lan76, Kum78, CoDa78} and in the case $p=1$ \cite{Ye}, a breakthrough was made by Junge and Xu~\cite{JX}, where they established the noncommutative maximal ergodic inequalities for positive Dunford-Schwartz operators. This celebrated work  motivated further research on noncommutative ergodic theory, such as \cite{An,Be,HS,Hu,Li}. In particular, the first author and his collaborators broke the framework of Junge and Xu by establishing the maximal and pointwise ergodic theorems for a large subclass of positive operators on one single noncommutative $L_p$ space, see~\cite{HLW, HRW} for more details.

However, to the best of the authors' knowledge, there is no quantitative estimate of noncommutative ergodic theorems in the literature.
This paper is devoted to the first study of quantitative mean ergodic theorem (\ref{norm-vari}) under the noncommutative framework. To better state our results, we need to introduce some notions. Let $\M$ be a semifinite von Neumann algebra equipped with a normal semifinite faithful (abbreviated as \emph{n.s.f}) trace $\tau$. Let $L_p(\M)$ be the associated noncommutative $L_{p}$ space and $L_p(\M;\ell^{rc}_{2})$ be one noncommutative analogue of Hilbert-valued $L_{p}$ space (see Section~\ref{Pre-Non} for the precise definition). Throughout the paper, $T$ stands for a bounded linear operator on $L_p(\mathcal{M})$.
The one-sided ergodic averages $M_n(T)$ is defined as
$$M_{n}(T)=\frac{1}{n+1}\sum_{k=0}^{n}T^{k},\quad \ \forall\ n\in\N;$$
and if $T$ is invertible, one may define the two-sided ergodic averages $B_{n}(T)$ by
$$B_{n}(T)=\frac{1}{2n+1}\sum_{k=-n}^{n}T^{k},\ \ \forall\ n\in\N.$$

\begin{thm}\label{theorem11}
Let $1<p<\infty$. Suppose that $T$ be an invertible power bounded operator,
\begin{equation}\label{con-1}
  \sup_{k\in\Z}\|T^{k}:L_p(\M)\rightarrow L_p(\M)\|<\infty.
\end{equation}
Then there exists a positive constant $C_{p}$ such that
%
$$\sup\big\|\big(M_{n_{i}}(T)x-M_{n_{i+1}}(T)x\big)_{i\in \N}\big\|_{L_p(\M;\ell^{rc}_{2})}\leq C_{p}\|x\|_{L_{p}(\M)},\forall x\in L_p(\M)$$
where the supremum is taken over all the increasing subsequence $(n_{i})_{i\in\N}$ of positive integers. Similar inequality holds for two-sided ergodic averages $(B_n(T))_{n\in\mathbb N}$.
\end{thm}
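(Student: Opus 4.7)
My plan is to prove the theorem in three stages. The first stage reduces the quantitative mean ergodic inequality to a square function inequality for Fejér-type convolution operators on the integers by a noncommutative analogue of the Coifman--Weiss transference principle. Under hypothesis~\eqref{con-1}, the map $k \mapsto T^k$ is a uniformly bounded representation of $\Z$ on $L_p(\M)$; transference then allows one to replace the abstract averages $M_n(T)f$ by convolutions against the discrete Fejér kernel $K_n = \frac{1}{n+1}\sum_{k=0}^n \delta_k$ acting on suitable vector-valued functions on $\Z$. A subtlety is that $T$ must be shown to act boundedly on an appropriate vector-valued $L_p$ space; this \emph{extension} step is where the operator-theoretic hypotheses enter crucially. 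The quantitative mean ergodic inequality then reduces to
\begin{equation*}
 \Bigl\| \Bigl( \sum_i |(K_{n_{i+1}} - K_{n_i}) * g|^2 \Bigr)^{1/2} \Bigr\|_{L_p(\n; \ell^{rc}_2)} \le C_p \|g\|_{L_p(\n)},
\end{equation*}
uniformly in the finite increasing sequence $(n_i)$, where $\n$ is an auxiliary semifinite von Neumann algebra carrying the lifted action of $\Z$.

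In the second stage, I would establish the displayed inequality by passing to a continuous model on $\R$: replace the kernels $K_n$ by the uniform densities $h_t(x) = t^{-1}\mathbf{1}_{[0,t]}(x)$, which differ from the discrete Cesàro kernels by a unit-scale error that is easily controlled. The Fourier multipliers $\widehat{h_t}(\xi)$ are explicit, so Plancherel combined with a telescoping argument gives uniform $L_2$ bounds on the square function; this settles $p=2$. For general $p$, I would invoke the Calder\'on--Zygmund machinery in the semicommutative setting, combined with an almost orthogonality and Cotlar--Stein type decomposition that groups the scales $t_i$ into dyadic blocks, separately for the row and column parts of the $\ell^{rc}_2$ norm. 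The two-sided averages $B_n(T)$ are handled in the same way, with the symmetric kernel $(2t)^{-1}\mathbf{1}_{[-t,t]}$ in place of $h_t$.

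The main obstacle will be the Calder\'on--Zygmund step for the non-smooth kernels $h_{t_{i+1}} - h_{t_i}$: these are linear combinations of characteristic functions and therefore fail the usual H\"ormander smoothness hypothesis on the kernel. Addressing this is precisely the role of the noncommutative Calder\'on--Zygmund theory for non-smooth kernels mentioned in the abstract: one extracts cancellation at each dyadic scale in an averaged sense and controls the off-diagonal pieces through $L_2$ Fourier estimates rather than pointwise kernel bounds. The other delicate point is the interaction of row and column components of $\ell^{rc}_2$: one naturally treats them separately, but their recombination (via duality, or complex interpolation using the two one-sided square function estimates) requires matching dual bounds on both sides, and this is where I expect the technical bookkeeping to be heaviest.
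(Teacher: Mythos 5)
Your high-level architecture matches the paper's: a noncommutative Coifman--Weiss transference reduces the ergodic inequality to an operator-valued square-function estimate for Fej\'er-type averages, after first extending $T$ to $L_p(\M;\ell_2^{rc})$ via noncommutative Khintchine; the square-function estimate is then proved for $p=2$ by Plancherel and for general $p$ by Calder\'on--Zygmund theory and interpolation (weak $(1,1)$ and $L_\infty\to \mathrm{BMO}$, interpolated against $L_2$). So Stage~1 and the skeleton of Stage~2 are essentially the paper's Proposition~\ref{trans} and Theorem~\ref{Th5}.

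The main point where your sketch diverges, and where I think you are glossing over the hardest structural idea, is the treatment of increments spanning several dyadic scales. The paper works directly on $\Z$ and splits each interval $[n_i,n_{i+1})$ into ``short'' pieces lying inside a dyadic block and a ``long'' piece $[2^{k_i},2^{l_i})$; the long piece is then compared against the dyadic conditional expectations by writing $M_{A_{2^{k_i}}}-M_{A_{2^{l_i}}} = (M_{A_{2^{k_i}}}-\mathsf{E}_{k_i}) + (\mathsf{E}_{k_i}-\mathsf{E}_{l_i}) + (\mathsf{E}_{l_i}-M_{A_{2^{l_i}}})$. This off-loads the long-jump part onto a genuine noncommutative martingale square function (handled by the Pisier--Xu/Randrianantoanina weak $(1,1)$ Burkholder--Gundy inequality, which requires the Cuculescu construction) plus a single fixed family $(M_{A_{2^k}}-\mathsf{E}_k)_k$ already treated in \cite{HX}. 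Your ``Cotlar--Stein over dyadic blocks'' would naturally handle the short increments (which is exactly what the paper's weak $(1,1)$ argument does, via an almost-orthogonality Schur test against the scale of the Cuculescu projections), but it is not clear how an almost-orthogonality argument alone recovers the $\ell_2$-valued weak-$(1,1)$ control needed when $n_{i+1}/n_i$ is unbounded; in the noncommutative setting this is precisely where the martingale machinery is indispensable, and a reader following your outline would not know to import it. The passage to a continuous model on $\R$ is a harmless cosmetic change, and your remarks about non-smooth kernels and the row/column recombination (real interpolation for $1<p<2$, complex interpolation through $\mathrm{BMO}$ for $2<p<\infty$) are accurate.
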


\begin{remark}{\rm
Theorem~\ref{theorem11} is a noncommutative version of~\cite[Theorem 1.2]{JOR96}. Note also that Theorem~\ref{theorem11} can be viewed as an ergodic theorem with respect to a bounded $L_p(\mathcal M)$-representation of the group $\mathbb Z$. This motivates us to further consider
 the bounded noncommuative $L_p$-representations of other groups such as the ones of polynomial growth that appeared in \cite{HLW}. Similar quantitative mean ergodic theorems will appear in the subsequent paper \cite{HLX}.}
\end{remark}

The second main result concerns the quantitative mean ergodic theorem for Lamperti operators.
\begin{defi}\rm
 An operator $T$ is called a Lamperti operator (or supports separating) if for any two $\tau$-finite projections $e,f\in\mathcal{M}$ satisfies $ef=0$, then
$$(Te)^*Tf=Te(Tf)^*=0.$$
\end{defi}

In~\cite{HRW},  the authors established the maximal inequalities for the convex combinations of positive Lamperti contractions on one single noncommutative $L_p$ spaces, which can be viewed as the first Akcoglu's maximal ergodic inequalities in the noncommutative setting.

In this paper, we establish the quantitative mean ergodic theorem for Lamperti operators where the positivity assumption can be relaxed. 


\begin{thm}\label{thm-1}
	Let $1< p<\infty$. Suppose that $T$ belong to the family
\begin{equation}\label{con-lam}
	\mathfrak{S}=\overline{\mbox{conv}}^{\mbox{sot}}\{S:L_p(\mathcal M)\rightarrow L_p(\mathcal M)~\mbox{Lamperti contractions}\},
\end{equation}	
that is, the closed convex hull of all Lamperti contractions on $L_p(\mathcal M)$ with respect to strong operator topology.	For $2\leq p<\infty$, there exists a positive constant $C_{p}$ such that for any increasing subsequence of positive integers $(n_{i})_{i\in\N}$,
$$\big\|\big(M_{n_{i}}(T)x-M_{n_{i+1}}(T)x\big)_{i\in \N}\big\|_{L_p(\M;\ell^{rc}_{2})}\leq C_{p}\|x\|_{L_{p}(\M)}\;\forall x\in L_p(\M).$$
For $1<p<2$, if $T\in \mathfrak{S}$ is positive, then the above conclusion holds too.

\end{thm}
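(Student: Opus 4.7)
The plan is to deduce Theorem~\ref{thm-1} from Theorem~\ref{theorem11} by dilating $T$ to an invertible isometry on a larger noncommutative $L_p$-space.

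First, for a single Lamperti contraction $S$ on $L_p(\mathcal{M})$, the noncommutative extension theory for Lamperti operators---the \emph{extension properties} highlighted in the abstract---provides a semifinite von Neumann algebra $\tilde{\mathcal{M}}$, an invertible Lamperti isometry $U$ on $L_p(\tilde{\mathcal{M}})$, an isometric embedding $J:L_p(\mathcal{M})\hookrightarrow L_p(\tilde{\mathcal{M}})$ coming from a trace-preserving $*$-homomorphism, and a contractive projection $Q:L_p(\tilde{\mathcal{M}})\to L_p(\mathcal{M})$ arising from a normal conditional expectation, satisfying the dilation relation $S^n=Q\,U^n\,J$ for all $n\ge 0$. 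This is the noncommutative analogue of the classical Akcoglu--Peller dilation theorem. For a general $T\in\mathfrak{S}$, one approximates $T$ strongly by finite convex combinations $\sum_j\lambda_j S_j$ of Lamperti contractions, dilates each $S_j$ into a common ambient algebra (for instance via direct sums or an ultraproduct), and passes to the limit to obtain an invertible isometric dilation for $T$ itself.

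Once the dilation is in hand, linearity gives, for every increasing sequence $(n_i)$,
$$M_{n_i}(T)x-M_{n_{i+1}}(T)x \;=\; Q\bigl(M_{n_i}(U)Jx-M_{n_{i+1}}(U)Jx\bigr),$$
so applying Theorem~\ref{theorem11} to the invertible isometry $U$ (trivially power bounded) yields
$$\bigl\|\bigl(M_{n_i}(U)Jx-M_{n_{i+1}}(U)Jx\bigr)_{i\in\mathbb{N}}\bigr\|_{L_p(\tilde{\mathcal{M}};\ell_2^{rc})} \;\le\; C_p\|x\|_{L_p(\mathcal{M})}.$$
It then remains to transfer this bound through $Q$. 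Because $Q$ is a completely positive conditional expectation, the Kadison--Schwarz inequality $Q(y)^*Q(y)\le Q(y^*y)$ shows $Q$ is a contraction on both $L_p(\,\cdot\,;\ell_2^c)$ and $L_p(\,\cdot\,;\ell_2^r)$. For $p\ge 2$ the $L_p(\ell_2^{rc})$-norm equals the maximum of the row and column norms, so this contractivity transfers directly and the first assertion of the theorem follows.

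For $1<p<2$ the $L_p(\ell_2^{rc})$-norm is defined by the infimum decomposition $\|y\|_{L_p(\ell_2^{rc})}=\inf\{\|y^r\|_{L_p(\ell_2^r)}+\|y^c\|_{L_p(\ell_2^c)} : y=y^r+y^c\}$, and one has to make the optimal decomposition of $M_{n_i}(U)Jx-M_{n_{i+1}}(U)Jx$ pass through $Q$. The positivity hypothesis on $T$ enters here: it forces the Lamperti contractions in the convex combinations, and hence the dilating isometry $U$ and the averages $M_n(U)$, to be positive, so that the decomposition can be chosen compatibly with $Q$ and with the positive ergodic structure. The main obstacle I foresee is to execute the dilation of the first step uniformly over the whole closed convex hull $\mathfrak{S}$---in particular, to guarantee that the limiting procedure preserves both the invertibility and the isometric character of $U$ together with the complete positivity of $Q$---and, in the $p<2$ regime, to control the $\ell_2^{rc}$ decomposition in a way that is genuinely compatible with the dilation.
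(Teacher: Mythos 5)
Your proposal rests on dilating to an \emph{invertible} isometry and then citing Theorem~\ref{theorem11}, but this is not what the paper's dilation machinery delivers, and the gap is substantive. Lemma~\ref{dilation} (from \cite[Cor.~4.7]{HRW}) only produces, for each $T_j\in\mbox{conv}(\mathbb{SS}(L_p(\mathcal M)))$ and each finite horizon $N$, an \emph{$N$-dilation} $T_j^{n}=Q_{j,N}U_{j,N}^{n}J_{j,N}$ with $U_{j,N}$ an isometry on $L_p$ of a larger semifinite algebra --- not a surjective (hence invertible) isometry, and with $Q_{j,N}$, $J_{j,N}$ merely contractions, not normal conditional expectations. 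Since Theorem~\ref{theorem11} and its transference engine (Proposition~\ref{trans}) hinge on the inequality \eqref{ergo32}, which needs $T^{-k}$ to be uniformly bounded, it simply cannot be applied to a one-sided isometry. If the dilation really did produce an invertible isometry as you claim, your argument would remove the positivity hypothesis for $1<p<2$ altogether, which is precisely the case the paper cannot handle in general --- a signal that the claimed invertibility is not available here.

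The paper's actual route is different and is worth internalizing: it establishes a separate quantitative inequality for (non-invertible) isometries, Lemma~\ref{thm-iso}, by redoing the Coifman--Weiss transference and replacing the ``$\lesssim$'' of \eqref{ergo32} by an equality. That step requires the isometry to extend to an \emph{isometry} of $L_p(\mathcal M;\ell_2^{rc})$, and this is exactly the content of Proposition~\ref{extend of Lam}: true for every isometry when $2\le p<\infty$, but for $1<p<2$ a general isometry only extends to a contraction, and only a positive isometry extends isometrically. So the positivity hypothesis for $p<2$ is consumed at the transference step for $U$, not --- as you suggest --- in pushing a decomposition through $Q$. Once Lemma~\ref{thm-iso} is in hand, the passage back to $\mathcal M$ is soft: $Q_{j,N}$ and $J_{j,N}$ extend to bounded maps on $L_p(\,\cdot\,;\ell_2^{rc})$ by the Khintchine-based Lemma~\ref{exten-lin}, with no need for Kadison--Schwarz, complete positivity, or any conditional-expectation structure. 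Finally, the paper does not dilate $T$ itself (no ultraproduct or direct-sum amalgamation is needed): it proves the uniform bound for the approximants $T_j\to T$ in SOT and then passes to the limit for each fixed finite sequence, using Proposition~\ref{nonkin} to control the $\ell_2^{rc}$ norm by finitely many $L_p$ norms. Your proposed amalgamated dilation of the whole closed convex hull is an additional, unverified construction that the paper deliberately avoids.
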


\bigskip

\begin{remark}{\rm
(i) Theorem \ref{theorem11} and Theorem \ref{thm-1} 
seem new even in the commutative case since the quantitative mean ergodic inequalities are deduced for a large class of operators acting on a fixed  $L_p$ spaces with $1<p\neq2<\8$. It should be pointed out that when $\mathcal M$ is commutative, the class of operators in the two theorems should be able to be enlarged. We left this to the interested reader.

 (ii) In the remarkable paper \cite{AR}, the authors obtained a form of quantitative mean ergodic theorems for power-bounded operators on a general Banach space $X$ having martingalge cotype $p$ property with $p\geq2$,
\begin{align}\label{AR}\sum_i\|M_{n_{i}}(T)x-M_{n_{i+1}}(T)x\|^p_{X}\leq C_{p}\|x\|^p_{X}\;\forall x\in X.\end{align}
   More interestingly, they exploited this quantitative estimate to provide lower bounds on the rate of metastability, which is related to Kohlenbach's `proof mining' program (see the end of \cite{AR} and the references therein).  It is easy to observe that our Theorems \ref{theorem11} and \ref{thm-1} strengthen greatly \eqref{AR} in the case $X$ being the noncommutative $L_p$ spaces, and thus one might  expect further applications. We will take care of this elsewhere.}
 \end{remark}

With a moment's thought, there are many difficulties to apply the classical methods exploited in~\cite{A, AR, JOR96, JRW98} to our setting. Our approach is mainly motivated by the study of maximal ergodic inequalities and semi-commutative harmonic analysis.


The proof of Theorem \ref{theorem11} and Theorem \ref{thm-1} rely on several auxiliary results. Our first key ingredient is to explore the noncommutative version of the classical transference principle due to Coifman and Weiss \cite{CoWe}. Before doing this, we introduce some notation. Let $(n_{i})_{i\in\N}$ be an increasing sequence in $\N$. A sequence of intervals $(A_{n_i})_{i\in\N}\subset\Z$ is called {\it nested} if it satisfies one of the following two cases:
\begin{enumerate}[\noindent]
\item~{(a)}~each $A_{n_i}$  can be written as $[0,n_i]$;
\item~{(b)}~each $A_{n_i}$  can be written as $[-n_i,n_i]$.
\end{enumerate}
Let $A\subset\Z$ be an interval and $f: \Z\rightarrow S_{\M}$ be a bounded operator-valued function, where $S_{\M}$ is the subset of $\M$ with $\tau$-finite support. The averaging operator over $A$ is defined by
\begin{equation*}
  M_A f(v)= \frac{1}{|A|} \sum_{y\in A}f(v+y), \quad v\in\Z.
\end{equation*}
Given a nested sequence $(A_{n_i})_i$, set $T_i=M_{A_{n_i}}-M_{A_{n_{i+1}}}$.
\medskip

\begin{prop}\label{trans}
 Let $1<p<\8$ and $(A_{n_i})_i$ be the case {\rm (a)}.
 If there exists a positive constant $C_p$ such that
\begin{equation}\label{assume-1}
      \| (T_{i}f)_{i\in\N}\|_{L_p(\mathcal{N};
\ell_{2}^{rc})}\leq C_{p} \|f\|_{L_p(\mathcal N)}\ \forall f\in L_p(\mathcal N),
    \end{equation}
then for any invertible power bounded operator $T$, there exists an absolute constant $C$ such that
$$\big\|\big(M_{n_{i}}(T)x-M_{n_{i+1}}(T)x\big)_{i\in \N}\big\|_{L_p(\M;\ell^{rc}_{2})}\leq CC_{p}\|x\|_{L_{p}(\M)}\ \forall x\in L_p(\M).$$

A similar assertion holds for two-sided ergodic averages $(B_{n_i}(T))_{i}$.
\end{prop}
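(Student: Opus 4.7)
The approach is a noncommutative Coifman--Weiss transference. Fix $x \in L_p(\mathcal M)$; by a monotone convergence / density argument it suffices to treat a finite increasing sequence $n_1 < \cdots < n_L$. For a large integer $N \gg n_L$ introduce the test function
$$F_x(v) = T^v x \cdot \mathbf{1}_{[-3N,3N]}(v), \qquad v \in \Z,$$
regarded as an element of $L_p(\mathcal N)$ with $\mathcal N = \ell_\infty(\Z)\bar\otimes \mathcal M$, so that $L_p(\mathcal N) \cong \ell_p(\Z;L_p(\mathcal M))$. The key identity of the argument is that for every interior point $v \in [-N,N]$ and every $i$,
$$(M_{A_{n_i}} F_x)(v) = \frac{1}{n_i+1}\sum_{y=0}^{n_i} T^{v+y} x = T^v M_{n_i}(T) x,$$
and therefore $(T_i F_x)(v) = T^v\bigl(M_{n_i}(T)-M_{n_{i+1}}(T)\bigr)x$; that is, on the interior window the $\Z$-averaging differences $T_i$ realise the ergodic differences on $L_p(\mathcal M)$ conjugated by a translate $T^v$.

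Applying hypothesis~\eqref{assume-1} to $F_x$ and using power-boundedness with constant $K = \sup_{k\in\Z}\|T^k\|$ to bound $\|F_x\|_{L_p(\mathcal N)}$, one obtains
$$\|(T_i F_x)_i\|_{L_p(\mathcal N;\ell_2^{rc})} \le C_p \|F_x\|_{L_p(\mathcal N)} \le C_p (6N+1)^{1/p} K \|x\|_{L_p(\mathcal M)}.$$
Combining this with the Fubini description of $L_p(\mathcal N;\ell_2^{rc})$ over the commutative factor $\ell_\infty(\Z)$ --- which for $p \ge 2$ gives
$$\|(G_i)_i\|^p_{L_p(\mathcal N;\ell_2^{rc})} \gtrsim \sum_{v\in\Z}\|(G_i(v))_i\|^p_{L_p(\mathcal M;\ell_2^{rc})},$$
with an analogous inequality for $1<p<2$ obtained by splitting $\ell_2^{rc}=\ell_2^r+\ell_2^c$ --- and restricting the outer sum to $v \in [-N,N]$, one deduces
$$\sum_{v=-N}^{N}\|(T^v T_i(T)x)_i\|^p_{L_p(\mathcal M;\ell_2^{rc})} \lesssim N\, C_p^p K^p \|x\|^p_{L_p(\mathcal M)}.$$

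The final step is to discard the factor $T^v$ pointwise in $v$. In the scalar situation $\|T^v y\|_p \ge K^{-1}\|y\|_p$ is immediate from $\|T^{-v}\|_p \le K$; in the vector-valued setting one needs the corresponding uniform $\ell_2^{rc}$-extension
$$\|(T^{-v} w_i)_i\|_{L_p(\mathcal M;\ell_2^{rc})} \le \tilde K\|(w_i)_i\|_{L_p(\mathcal M;\ell_2^{rc})}\quad\forall v\in\Z,$$
applied with $w_i = T^v T_i(T)x$ to obtain the pointwise lower bound $\|(T^v T_i(T)x)_i\|_{L_p(\mathcal M;\ell_2^{rc})} \ge \tilde K^{-1}\|(T_i(T)x)_i\|_{L_p(\mathcal M;\ell_2^{rc})}$. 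Inserting this into the previous display and dividing by $N$, then letting $N \to \infty$, yields the desired inequality with final constant of order $C_p K \tilde K$; case~(b) is identical in structure with the symmetric intervals $[-n_i,n_i]$ and an enlarged window. The main obstacle in this plan is establishing the $\ell_2^{rc}$-extension bound for $T^{-v}$, since a bounded operator on $L_p(\mathcal M)$ is not automatically completely bounded and therefore the $\ell_2^{rc}$-valued extension is not free; this is precisely the operator-theoretic ``extension property'' input flagged in the abstract, and must be supplied by an auxiliary result tailored to invertible power bounded operators on a single $L_p(\mathcal M)$.
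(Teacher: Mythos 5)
Your proposal is correct and follows essentially the same Coifman--Weiss transference route as the paper: build the window function $F_x(v)=T^vx$, observe the intertwining identity $T_iF_x(v)=T^v(M_{n_i}(T)-M_{n_{i+1}}(T))x$, use the Fubini/Khintchine description of $L_p(\mathcal N;\ell_2^{rc})$ over the abelian factor $\ell_\infty(\Z)$, remove $T^v$ pointwise via the extension of $T^{-v}$ to $L_p(\mathcal M;\ell_2^{rc})$, and let the window grow. The one point you flag as the ``main obstacle'' is in fact not a genuine one: the $\ell_2^{rc}$-extension of a bounded operator on $L_p(\mathcal M)$ requires no complete boundedness and is not particular to power bounded operators --- it follows immediately for \emph{any} bounded operator from the noncommutative Khintchine inequalities (Proposition~\ref{nonkin}), since $\|(y_i)_i\|_{L_p(\mathcal M;\ell_2^{rc})}\approx\|\sum_i\varepsilon_iy_i\|_{L_p(\Omega;L_p(\mathcal M))}$ and $\mathrm{id}\otimes T$ acts boundedly on the latter; this is precisely the paper's Lemma~\ref{exten-lin}, applied to $T^{-v}$ with the uniform bound from power boundedness. (The completely bounded issue you have in mind is relevant for $L_p(\mathcal M;\ell_\infty)$, not for $\ell_2^{rc}$.) With this clarification your argument is complete and matches the paper's proof up to cosmetic differences in the choice of window.
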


The above noncommutative transference technique is partly motivated by noncommutative Calder\'on's transference principle developed in \cite{HLW}. According to the transference principle, to obtain~\eqref{assume-1}, we need the operator-valued square function inequalities related to the Hardy-Littlewood averages on $\Z$.

\smallskip

\begin{thm}\label{Th5}
Let $1\le p\le \8$ and $(n_{i})_{i\in\N}$ be an increasing sequence of positive integers. Then the following assertions are true with a
positive constant $C_p$ depending only on $p$:
\begin{enumerate}[\noindent]
\item\emph{(i)}~for $p=1$,
$$\|(T_{i}f)_{i\in\N}\|_{L_{1,\infty}(\mathcal{N},
\ell_{2}^{rc})}\leq C_p\|f\|_{1},\; \forall f\in L_{1}(\mathcal N);$$

\item\emph{(ii)}~for $p=\infty$,
$$\Big\|
\sum_{i:i\in\N} T_i \hskip-1pt f \otimes e_{1i}
\Big\|_{\mathrm{BMO}_{d}(\mathcal{R})} +
\Big\|
\sum_{i:i\in\N} T_i \hskip-1pt f \otimes e_{i1}
\Big\|_{\mathrm{BMO}_{d}(\mathcal{R})} \leq C_p\,
\|f\|_\infty,\; \forall f\in L_{\infty}(\mathcal N);$$
\item\emph{(iii)}~for $1<p<\8$,
$$\| (T_{i}f)_{i\in\N}\|_{L_p(\mathcal{N};
\ell_{2}^{rc})}\leq C_{p} \|f\|_{p}, \; \forall f\in L_{p}(\mathcal N).$$
\end{enumerate}
Here $\mathcal{N}=L_\infty(\Z)\overline{\otimes}\M$ is equipped with the tensor trace $\varphi=\sum_{\mathbb Z}\otimes \tau$ and $\mathcal{R}=\mathcal{N}\overline{\otimes}\mathcal{B}(\ell_2)$ with the tensor trace $\varphi\otimes tr$, where $tr$ is the canonical trace on $\mathcal{B}(\ell_2)$.
\end{thm}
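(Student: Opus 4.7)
The plan is to prove the three assertions of Theorem \ref{Th5} by a Calder\'on--Zygmund/interpolation scheme adapted to the operator-valued setting on $\mathbb Z$. First I would establish the $L_2$ bound (which is (iii) at $p=2$) directly; then I would prove the weak type endpoint (i) and the $\mathrm{BMO}$ endpoint (ii) via the noncommutative Calder\'on--Zygmund theory for non-smooth kernels; and finally I would deduce the full range $1<p<\8$ in (iii) by interpolation.

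For the $L_2$ step, since each $T_i$ acts only on the $\mathbb Z$ variable and commutes with the $\M$-action, it is an $\M$-linear Fourier multiplier. By Plancherel on $\mathbb Z$,
$\|(T_if)_i\|_{L_2(\mathcal{N};\ell_2^{rc})}^2 = \sum_i \|T_if\|_{L_2(\mathcal{N})}^2 = \int_{\mathbb T}\bigl(\sum_i |\widehat{T_i}(\xi)|^2\bigr)\|\widehat f(\xi)\|_{L_2(\M)}^2\,d\xi,$
so it suffices to bound $\sup_\xi \sum_i |\widehat{T_i}(\xi)|^2$ uniformly in the nested sequence $(n_i)$. This reduces to a scalar multiplier estimate which, via the Dirichlet-kernel formula for $\widehat{M_n}(\xi)$ combined with a telescoping argument on nested intervals, is essentially the Jones--Ostrovskii--Rosenblatt bound transported to $\mathbb Z$.

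For the endpoint inequalities (i) and (ii) I would invoke the noncommutative Calder\'on--Zygmund theory for non-smooth kernels, in the semi-commutative spirit of Mei and of Parcet. The kernels of the averages $M_{A_{n_i}}$ are essentially indicator functions of intervals and hence lack the H\"ormander/Lipschitz regularity assumed in classical CZ theory. The core technical work is therefore twofold: to verify that the family $\{T_i\}$ enjoys a Cotlar--Stein type almost-orthogonality off the diagonal, and to check that the associated kernels satisfy the weaker regularity hypotheses of the non-smooth CZ theorem, both with constants independent of $(n_i)$. With these in hand, one runs a Cuculescu construction on $f\in L_1(\mathcal N)$ to produce a good/bad decomposition; the $L_2$ step handles the good part, while the off-diagonal kernel estimates control the bad part to yield (i). The dyadic $\mathrm{BMO}$ endpoint (ii) is proved along the same lines, treating the row-valued object $\sum_i T_if\otimes e_{1i}$ and the column-valued object $\sum_i T_if\otimes e_{i1}$ separately, according to the row and column BMO structures on $\mathcal R$.

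Part (iii) for $1<p<\8$ then follows by interpolation: between (i) and the $L_2$ bound for $1<p\leq 2$ by the noncommutative Marcinkiewicz interpolation, and between the $L_2$ bound and the $\mathrm{BMO}$ endpoint (ii) for $2\leq p<\8$ by the noncommutative $\mathrm{BMO}$--$L_p$ interpolation of Musat and Junge--Mei. The principal obstacle I anticipate is precisely the non-smoothness of the averaging kernels together with the need to carry the row and column Hilbertian structures through the whole argument while keeping all constants independent of the nested sequence $(A_{n_i})_i$; this is the technical heart that requires the non-smooth CZ machinery and the almost-orthogonality argument mentioned in the abstract.
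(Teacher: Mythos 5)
Your high-level architecture (establish the $L_2$ bound, prove the two endpoints via noncommutative Calder\'on--Zygmund theory on a Cuculescu good/bad decomposition, then interpolate) coincides with the paper's overall scheme, and your $L_2$ argument via Plancherel is essentially what the paper invokes in Proposition~\ref{t1}. However, there is a genuine gap in your plan: you treat the family $\{T_i\}_{i\in\N}$ as a single object amenable to one Calder\'on--Zygmund argument, whereas the paper must first perform a crucial reduction (Section~\ref{redu}) before any CZ machinery applies.

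The point is that the differences $T_i f = M_{A_{n_i}}f - M_{A_{n_{i+1}}}f$ can straddle arbitrarily many dyadic generations, and the off-diagonal almost-orthogonality decay that makes the bad-function estimate close (the $2^{-|k-n|}$ gain in~\eqref{bad1}) hinges precisely on all intervals $[n_i,n_{i+1})$ with $i\in\mathcal{S}_k$ lying inside a single dyadic generation $[2^k,2^{k+1})$, so that $|A_{n_i}|\approx 2^k$ uniformly and the boundary counting $|\mathcal{I}_1(\cdot,n)|\lesssim 2^n$ produces a summable weight $2^{n-k}$. The paper therefore first splits each $[n_i,n_{i+1})$ into at most three pieces: two ``short'' pieces confined to single dyadic generations, and one ``long'' piece of the form $[2^{k_i},2^{l_i})$. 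The long piece is \emph{not} handled by CZ theory at all; it is further decomposed as $(M_{A_{2^{k_i}}}-\mathsf{E}_{k_i}) + (\mathsf{E}_{k_i}-\mathsf{E}_{l_i}) + (\mathsf{E}_{l_i}-M_{A_{2^{l_i}}})$, and one appeals to the already-known square function estimates for $(M_{A_{2^k}}-\mathsf{E}_k)_k$ (Lemma~\ref{long-ineq}, from~\cite{HX}) and for noncommutative dyadic martingale differences (noncommutative Burkholder--Gundy, \cite{Ran1}). Only the short pieces, collected in Theorem~\ref{t5343232}, receive the Cuculescu/CZ/almost-orthogonality treatment you describe, and there the arguments of Sections~\ref{weak} and~\ref{strong} do proceed along the lines you outline (good part via $L_2$, bad part via cancellation and off-diagonal decay, row/column split for dyadic $\mathrm{BMO}$, Marcinkiewicz real interpolation on one side and $\mathrm{BMO}$--$L_p$ complex interpolation on the other). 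Without the long/short reduction, the Cotlar-type summability you are counting on simply fails, because $n_{i+1}-n_i$ is unconstrained relative to the averaging scale; your proposal as written would therefore stall at the bad-function estimate.
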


{Theorem \ref{Th5} is an operator-valued version of the square function inequalities that were obtained in the fundamental works \cite{JOR96,JRW98} for  $p\le 2$, and in \cite{Hong} (see also \cite{HM1}) for $p>2$. We refer to Section \ref{strong} for the definition of the dyadic $\mathrm{BMO}$ space $\mathrm{BMO}_{d}(\mathcal R)$. 

\medskip

To establish Proposition \ref{trans}, we need the following extension property of the bounded linear operators.

\begin{lem}\label{exten-lin}
Let $1\le p<\8$. Assume that $T$ is a bounded linear operator on $L_p(\mathcal M)$. Then $T$ extends to a bounded operator on $L_p(\mathcal M;\ell^{rc}_2)$.
\end{lem}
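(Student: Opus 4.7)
The plan is to reduce the boundedness of the natural extension $(x_i)\mapsto(Tx_i)$ on $L_p(\M;\ell^{rc}_{2})$ to the (essentially trivial) boundedness of an amplification over a commutative factor, using the noncommutative Khintchine inequality to linearize the column/row $\ell_2$ norms.

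Fix a probability space $(\Omega,\mathbb{P})$ carrying a sequence $(\varepsilon_i)_{i\in\N}$ of independent Rademacher random variables and set $\mathcal{A}=L_\infty(\Omega)\overline{\otimes}\M$. The noncommutative Khintchine inequality of Lust-Piquard (the case $1\le p\le 2$ being supplied by Lust-Piquard and Pisier) states that for any finite sequence $(x_i)\subset L_p(\M)$,
$$\Big\|\sum_i\varepsilon_i\otimes x_i\Big\|_{L_p(\mathcal{A})}\sim_p\|(x_i)\|_{L_p(\M;\ell_2^{rc})},$$
where the right-hand side is the maximum of the column and row $L_p$ norms for $p\ge 2$, and the infimum of $\|(y_i)\|_{L_p(\M;\ell_2^{c})}+\|(z_i)\|_{L_p(\M;\ell_2^{r})}$ over all splittings $x_i=y_i+z_i$ for $1\le p\le 2$, matching the paper's definition of $L_p(\M;\ell_2^{rc})$ in both regimes.

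Since $L_\infty(\Omega)$ is commutative, Fubini identifies $L_p(\mathcal{A})$ isometrically with the Bochner space $L_p(\Omega;L_p(\M))$. In particular, the pointwise-in-$\omega$ action $\mathrm{id}_{L_p(\Omega)}\otimes T$ is automatically bounded on $L_p(\mathcal{A})$ with norm at most $\|T\|_{L_p(\M)\to L_p(\M)}$: for any $F\in L_p(\mathcal{A})$,
$$\|(\mathrm{id}\otimes T)F\|_{L_p(\mathcal{A})}^p=\int_\Omega\|TF(\omega)\|_{L_p(\M)}^p\,d\mathbb{P}(\omega)\le\|T\|^p\,\|F\|_{L_p(\mathcal{A})}^p.$$

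Combining the two displays with $F=\sum_i\varepsilon_i\otimes x_i$ gives, for every finite sequence $(x_i)\subset L_p(\M)$,
$$\|(Tx_i)\|_{L_p(\M;\ell_2^{rc})}\lesssim_p\Big\|\sum_i\varepsilon_i\otimes Tx_i\Big\|_{L_p(\mathcal{A})}\le\|T\|\,\Big\|\sum_i\varepsilon_i\otimes x_i\Big\|_{L_p(\mathcal{A})}\lesssim_p\|T\|\,\|(x_i)\|_{L_p(\M;\ell_2^{rc})},$$
and density of finitely supported sequences in $L_p(\M;\ell_2^{rc})$ (valid for $p<\infty$) extends the estimate to the whole space. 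There is no serious obstacle: the noncommutative Khintchine inequality covers the whole range $1\le p<\infty$, and the only delicate point is the endpoint $p=1$, which is still within the scope of the original Lust-Piquard inequality.
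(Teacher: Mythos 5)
Your proof is correct and is essentially the same argument the paper has in mind (and indeed the same as the proof the authors drafted but left out): apply the noncommutative Khintchine inequality (Proposition~\ref{nonkin}) to reduce the $L_p(\M;\ell_2^{rc})$ norm to a Rademacher average in $L_p(L_\infty(\Omega)\overline\otimes\M)\cong L_p(\Omega;L_p(\M))$, observe that $\mathrm{id}\otimes T$ is trivially bounded there with norm $\|T\|$, and apply Khintchine again. Nothing to add.
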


Lemma \ref{exten-lin} follows easily from the noncommutative Khintchine inequalities, Proposition \ref{nonkin}, which should be known to experts. So we leave the proof to the interested reader. {Then it is easy to see that Theorem \ref{theorem11} is a product of Theorem \ref{Th5} and Proposition~\ref{trans}.}

\medskip

However, in sharp contrast to showing Theorem \ref{theorem11}, to exploit the transference technique in order to prove Theorem \ref{thm-1} is much more complicated. To this end, we start with a reduction. More precisely, we first reduce Lamperti operators to isometries by applying the structural characterizations and dilation properties of Lamperti operators recently developed in~\cite{HRW} (see Section~\ref{main-thm2}). With this reduction, the effort will be devoted to the strong type $(p,p)$ estimate of the square function for isometric operators. This will be achieved by using a similar noncommutative  transference principle as Proposition \ref{trans} for isometries, {\it once} there holds the following property that concerns the isometric extension of isometries to $L_p(\mathcal M;\ell^{rc}_2)$.

\begin{prop}\label{extend of Lam}
	Let $1< p<\infty$ and $T:L_p(\mathcal M)\rightarrow L_p(\mathcal M)$ be an isometry. Then $T$ extends to an isometry (resp. a contraction) on $L_p(\mathcal M;\ell_2^{rc})$ if $2\le p<\8$ (resp. $1< p<2$). If $T$ is morevoer positive, then $T$ extends also to an isometry on $L_p(\mathcal M;\ell_2^{rc})$ for $1<p<2$.
\end{prop}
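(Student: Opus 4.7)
The plan is to invoke the Yeadon-Sherman structural theorem for $L_p$-isometries, recalled in [HRW]: for $p\ne2$, every isometry $T:L_p(\M)\to L_p(\M)$ admits a canonical form
$$T(x)=w\,B^{1/p}\,J(x),$$
where $w\in\M$ is a partial isometry with $w^*w=J(1)$, $J:\M\to\M$ is a normal Jordan $*$-monomorphism, and $B\ge 0$ is affiliated with the commutant of $J(\M)$ inside $J(1)\M J(1)$, subject to the trace-preserving condition $\tau(B\,J(z))=\tau(z)$ for $z\in\M_+$. The case $p=2$ is trivial since $L_2(\M;\ell_2^{rc})$ coincides with the Hilbert space $L_2(\M)\otimes_2\ell_2$ and every $L_2$-isometry is automatically a Hilbert-space isometry that extends entrywise. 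So we focus on $p\ne 2$.

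Next, the Kadison-Stormer decomposition splits the Jordan $*$-monomorphism as $J=\pi\oplus\pi^{\mathrm{op}}$, where a central projection in $J(\M)''$ separates a normal $*$-homomorphism $\pi$ from a normal $*$-anti-homomorphism $\pi^{\mathrm{op}}$. Correspondingly $T=T_1+T_2$, with mutually orthogonal ranges, $T_1(x)=w_1 B_1^{1/p}\pi(x)$ and $T_2(x)=w_2 B_2^{1/p}\pi^{\mathrm{op}}(x)$. A direct calculation, using that each $B_j$ commutes with the range of the corresponding (anti-)homomorphism and that $\pi^{\mathrm{op}}(x^*)\pi^{\mathrm{op}}(x)=\pi^{\mathrm{op}}(xx^*)$, yields
$$\sum_i T_1(x_i)^*T_1(x_i)=B_1^{2/p}\,\pi\Bigl(\sum_i x_i^*x_i\Bigr),\qquad \sum_i T_2(x_i)^*T_2(x_i)=B_2^{2/p}\,\pi^{\mathrm{op}}\Bigl(\sum_i x_ix_i^*\Bigr),$$
together with symmetric identities for the row sums $\sum_i T_j(x_i)T_j(x_i)^*$. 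Combined with $\tau(B\,J(z))=\tau(z)$ and the Radon-Nikodym theorem for normal semifinite traces, one obtains central positive elements $h_1,h_2\in Z(\M)$ with $h_1+h_2=1$ such that the column and row norms of $T(x_i)$ reduce to central-weight combinations of the column and row norms of $(x_i)$.

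The extension claim then follows by case analysis. For $2\le p<\infty$, $\|\cdot\|_{L_p(\M;\ell_2^{rc})}$ is the maximum of column and row norms; the $*$-homomorphism piece $T_1$ preserves both norms on its central support, the $*$-anti-homomorphism piece $T_2$ interchanges them isometrically, and the orthogonality of ranges together with the central structure of $h_1,h_2$ conspire to preserve the overall maximum, giving isometric extension. For $1<p<2$, $\|\cdot\|_{L_p(\M;\ell_2^{rc})}$ is the infimum-sum norm over column-plus-row decompositions, and the same preservation/swapping behaviour yields contractivity via the triangle inequality on such decompositions. For positive $T$ with $1<p<2$, positivity forces $\pi^{\mathrm{op}}=0$, so $T=T_1$ is purely of $*$-homomorphic type and extends isometrically on $L_p(\M;\ell_2^c)$ and $L_p(\M;\ell_2^r)$ separately, hence on their infimum-sum; this can alternatively be obtained by duality from the already-proven $p'>2$ isometry applied to the positive $L_{p'}$-isometry $T^*$.

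The main technical obstacle is the bookkeeping in the mixed Jordan case: one must verify that the column/row interchange carried out by $T_2$ interacts with the central weights $h_1,h_2$ in exactly the way that preserves the intersection-max norm (for $p\ge 2$) and sits correctly under the infimum-sum norm (for $1<p<2$). The centrality of $h_1,h_2$ inside $Z(\M)$ is the structural feature that enables this; the absence of the anti-homomorphism component in the positive case is what makes the endpoint $1<p<2$ go through as an isometry.
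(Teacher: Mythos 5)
Your overall template---use the Yeadon/Junge--Ruan--Sherman canonical form $T(x)=wbJ(x)$, split $J$ via St{\o}rmer into a $*$-homomorphic piece and a $*$-anti-homomorphic piece, compute how $T$ acts on column and row square functions, and recombine using the trace-preservation identity $\tau(b^pJ(z))=\tau(z)$---is exactly the paper's approach. However there are three concrete gaps.

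First, and most importantly, you identify the $2\le p<\infty$ norm on $L_p(\M;\ell_2^{rc})$ as the \emph{maximum} of the column and row norms, but this paper deliberately uses the $\ell_p$-sum norm $\|(x_n)\|_{rc}=(\|(x_n)\|_c^p+\|(x_n)\|_r^p)^{1/p}$. This choice is not cosmetic: the anti-homomorphism component of $J$ sends column square functions to row square functions (and vice versa), but \emph{only on the central support $f$}. The resulting column norm of $(T(x_n))$ is $\tau(b^pJ(y_1^p)e)+\tau(b^pJ(y_2^p)f)$ and the row norm is $\tau(b^pJ(y_2^p)e)+\tau(b^pJ(y_1^p)f)$; neither individually matches the original column or row norm, and neither is a pure interchange, so the maximum is generally \emph{not} preserved. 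It is only the sum $\|(T(x_n))\|_c^p+\|(T(x_n))\|_r^p=\tau(b^pJ(y_1^p))+\tau(b^pJ(y_2^p))=\tau(y_1^p)+\tau(y_2^p)$ that collapses, and this is what the $\ell_p$-sum norm captures. Your sketch ``the orthogonality of ranges together with the central structure conspire to preserve the overall maximum'' does not hold.

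Second, for positive $T$ with $1<p<2$ you claim positivity forces the anti-homomorphism piece to vanish. This is false: the transpose map on $M_2$ is a positive Lamperti isometry and a $*$-anti-automorphism. The paper handles the positive case quite differently: $T=bJ$, pass to the opposite algebra via $\Sigma=\mathrm{Id}\oplus\sigma$ to turn $J$ into an honest $*$-homomorphism, define a new trace $\varphi(x)=\tau(b^p\Sigma^{-1}x)$ on $\Sigma(J(\M))$, and show the map $(x_n)\mapsto(b\Sigma^{-1}x_n)$ is an $L_p$-isometry by proving its predual map $\phi$ on $L_{p'}$ is an isometry (an explicit computation, not a formal transpose of $T$). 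Your fallback via ``the positive $L_{p'}$-isometry $T^*$'' also fails at the starting block: $T$ need not be surjective, so its Banach adjoint $T^*$ is a quotient map, not an isometry.

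Third, the ``Radon--Nikodym theorem'' step producing central elements $h_1,h_2\in Z(\M)$ with $h_1+h_2=1$ is not what the paper does and is not obviously available: the relevant projections $e,f$ live in the center of the von Neumann algebra generated by $J(\M)$, not in $Z(\M)$, and the paper never needs to transport them back via Radon--Nikodym---it works directly with $\tau(b^pJ(\cdot)e)$ and $\tau(b^pJ(\cdot)f)$ and only combines them at the end. I would tighten the computation to stay entirely inside $J(\M)''$ and use the $\ell_p$-sum norm explicitly, and rethink the positive case from scratch following the opposite-algebra/predual-isometry strategy.
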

When $\mathcal M$ is commutative, the above extension of Proposition~\ref{extend of Lam} is almost plain. The truly noncommutative case is highly non-trivial. Our argument depends on the structural description of isometries, see e.g. \cite{Jun-Ruan-Sher05, Yea81}. Moreover, for $1<p<2$, some complicated duality argument are explored, and similar one has appeared in \cite{HRW} in dealing with noncommutative maximal inequalities. For more details we refer to Section~\ref{main}.

\begin{remark}\label{app}{\rm
What we need to point out here is that the  estimates stated in all the aforementioned theorems for infinite summations should be understood as a consequence of the corresponding uniform boundedness for all finite summations by the standard approximation arguments (see e.g. \cite[Section 6.A]{JMX}). For this reason, as in \cite{HX}, we are not going to explain the convergence of infinite sums appearing in the whole paper if there is no ambiguity.}
\end{remark}

We end the introduction by mentioning the organization of the paper. In Section \ref{Pre-Non}, we recall the necessary background including noncommutative $L_p$ spaces and Hilbert-valued $L_p$ spaces, as well as the noncommutative Calder{\'o}n-Zygmund decomposition recently developed in~\cite{CCP}. Section~\ref{redu}-\ref{strong} is devoted to the proof of Theorem~\ref{Th5}. In Section~\ref{main}, we prove  Proposition \ref{trans} and Theorem~\ref{theorem11}. Finally, in Section~\ref{main-thm2}, we  prove Proposition~\ref{extend of Lam} and Theorem~\ref{thm-1}, which involves the intermediate square function inequalities for isometries---Lemma~\ref{thm-iso}.

\smallskip


\textbf{Notation:} In all what follows, we use the same letter $C$ to denote various positive constants
that may change at each occurrence. Also, we write
$X\lesssim Y$ for non-negative quantities $X$ and $Y$ to mean that $X\le CY$ for some inessential constant $C>0$. Similarly, we use the notation $X\thickapprox Y$ if both $X\lesssim Y$ and $Y \lesssim X$ hold.

\section{Preliminaries}\label{Pre-Non}
\subsection{Noncommutative $L_{p}$ spaces}\quad

\medskip

Throughout this paper, $\M$ denotes a semifinite von Neumann algebra equipped with a \emph{n.s.f} trace $\tau$. Let $\M_{+}$ be the cone of positive elements in $\M$. Given $x\in\M_{+}$, the support projection of $x$, denoted by $\mathrm{supp}x$, is defined as the least projection $e$ in $\M$ such that $ex=xe=x$. Let $\mathcal{S_{\M+}}$ be the set of all $x\in\M_{+}$ such that $\tau(\mathrm{supp}x)<\infty$ and  $\mathcal{S}_{\M}$ be the linear span of $\mathcal{S_{\M+}}$. Then $\mathcal{S}_{\M}$ is a $w^{*}$-dense $\ast$-subalgebra of $\M$. Given $0< p<\infty$ and $x\in\mathcal{S}_{\M}$, if we set
$$\|x\|_{p}=\big(\tau(|x|^p)\big)^{1/p},$$
where $|x|=(x^{\ast}x)^{\frac{1}{2}}$ is the modulus of $x$, then it turns out that $\|\cdot\|_{p}$ is a norm in $\mathcal{S}_{\M}$ for $1\leq p<\infty$, and a $p$-norm for $0< p<1$. The completion of $(\mathcal{S}_{\M},\|\cdot\|_{p})$ is the noncommutative $L_{p}$ space associated to the pair $(\M,\tau)$, which is simply denoted by $L_{p}(\M)$. As usual, we set $L_{\infty}(\M) = \M$ equipped with the operator norm. 

We also work with noncommutative weak $L_{p}$ spaces. Let $\M'$ be the commutant of $\M$. A closed densely defined operator on $\mathcal{H}$ ($\mathcal{H}$ being the Hilbert space on which $\M$ acts) is said to be affiliated with $\M$ if it commutes with any unitary in $\M'$. Given a densely defined selfadjoint operator $x$, its spectral projection $\int_{\mathcal{I}} d
\gamma_x(\lambda)$ will be simply denoted by $\chi_{\mathcal{I}}(x)$, where $\mathcal{I}$ is a measurable subset of $\R$.
A closed and densely defined operator
$x$ affiliated with $\mathcal{M}$ is said to be \emph{$\tau$-measurable} if
there is $\lambda\in\R_{+}$ such that $$\tau \big( \chi_{(\lambda,\infty)}
(|x|) \big) < \infty.$$
Let $L_{0}(\M)$ be the set of the $\ast$-algebra of \emph{$\tau$-measurable} operators. For $0< p<\infty$, the weak $L_{p}$ space $L_{p,\infty}(\M)$ is defined as the set of all $x$ in $L_0(\M)$ with the following finite quasi-norm
$$\|x\|_{p,\infty}=\sup_{\lambda > 0}\lambda\tau \big( \chi_{(\lambda,\infty)}
(|x|) \big)^{\frac{1}{p}}.$$
The following property has already been proved in 
\cite[Lemma 16]{Su12} that
for any $x_1, x_2 \in
L_{1,\infty}(\M)$ and any $\lambda\in\R_{+}$
\begin{align}\label{distri}
\tau\big((\chi_{(\lambda,\infty)}
(|x_1+x_2|)\big)\leq \tau\big(\chi_{(\lambda/2,\infty)}
(|x_1|)\big)+\tau\big(\chi_{(\lambda/2,\infty)}(|x_2|)\big).
\end{align}

The reader is referred to e.g. \cite{FK,P2} for a comprehensive study of noncommutative $L_p$ spaces.
\subsection{Noncommutative Hilbert-valued $L_p$ spaces}\quad

\medskip

In this subsection, we recall the noncommutative Hilbert-valued $L_p$ spaces \cite{P2}. Let $(x_{n})$ be a finite sequence in $L_{p}(\mathcal M)$. Define
$$\|(x_{n})\|_{L_{p}(\mathcal M; \ell_{2}^{r})}=\|(\sum_{n}|x^{\ast}_{n}|^{2})^{\frac{1}{2}}\|_{p},\ \|(x_{n})\|_{L_{p}(\mathcal M; \ell_{2}^{c})}=\|(\sum_{n}|x_{n}|^{2})^{\frac{1}{2}}\|_{p}.$$
Then ${L_{p}(\mathcal M; \ell_{2}^{r})}$ (resp. ${L_{p}(\mathcal M; \ell_{2}^{c})}$) is defined as the completion of all finite sequences in $L_p(\mathcal M)$ with respect to $\|\cdot\|_{{L_{p}(\mathcal M; \ell_{2}^{r})}}$ (resp. $\|\cdot\|_{{L_{p}(\mathcal M; \ell_{2}^{c})}}$). The space $L_p(\mathcal{M};
\ell_{2}^{rc})$ is defined as follows.
\begin{itemize}
	\item If $2\leq p\leq\infty$,
	$$L_p(\mathcal{M};
	\ell_{2}^{rc})=L_{p}(\mathcal M; \ell_{2}^{c})\cap L_{p}(\mathcal M; \ell_{2}^{r})$$
	equipped with the intersection norm:
	$$\|(x_{n})\|_{L_p(\mathcal{M};
		\ell_{2}^{rc})}=\Big(\|(x_{n})\|^p_{L_{p}(\mathcal M; \ell_{2}^{c})}+\|(x_{n})\|^p_{L_{p}(\mathcal M; \ell_{2}^{r})}\Big)^{\frac1p}.$$
	\item If $1\leq p<2$,
	$$L_p(\mathcal{M};
	\ell_{2}^{rc})=L_{p}(\mathcal M; \ell_{2}^{c})+ L_{p}(\mathcal M; \ell_{2}^{r})$$
	equipped with the sum norm:
	$$\|(x_{n})\|_{L_p(\mathcal{M};
		\ell_{2}^{rc})}=\inf\Big(\|(y_{n})\|^p_{L_{p}(\mathcal M; \ell_{2}^{c})}+\|(z_{n})\|^p_{L_{p}(\mathcal M; \ell_{2}^{r})}\Big)^{\frac1p},$$
	where the infimum runs over all possible decompositions $x_{n}=y_{n}+z_{n}$ with $y_{n}$ and $z_{n}$ in $L_{p}(\mathcal{M})$.
\end{itemize}
 This procedure is also used to define the spaces
$L_{1,\infty}(\mathcal{M}; \ell_{2}^{r})$ (resp.
$L_{1,\infty} (\mathcal{M}; \ell_{2}^{c})$) and  $L_{1,\infty} (\mathcal{M}; \ell_{2}^{rc})$ with the sum norm,
$$\|(x_{n})\|_{L_{1,\infty}(\mathcal{M};
	\ell_{2}^{rc})}=\inf_{x_{n}=y_{n}+z_{n}}\big\{\|(y_{n})\|_{L_{1,\infty}(\mathcal{M}; \ell_{2}^{c})}+\|(z_{n})\|_{L_{1,\infty}(\mathcal{M}; \ell_{2}^{r})}\big\}.$$

We remark that the definition of space ${L_{p}(\mathcal M; \ell_{2}^{rc})}$ equals the classical one~\cite{P2}, and one can easily see that the following basic properties related to space ${L_{p}(\mathcal M; \ell_{2}^{rc})}$ are also valid.

\begin{prop}\label{dual}
Let $1\le p<\8$ and $p^\prime$ be its conjugate index. Then
\begin{equation*}
  (L_p(\M;\ell_2^{c}))^*=L_{p^\prime}(\M;\ell_2^{c}),\quad (L_p(\M;\ell_2^{r}))^*=L_{p^\prime}(\M;\ell_2^{r})
\end{equation*}
and
\begin{equation*}
  (L_p(\M;\ell_2^{rc}))^*=L_{p^\prime}(\M;\ell_2^{rc}).
\end{equation*}
The anti-duality bracket is given by
$$\langle (y_n), (x_n)\rangle=\sum_n\tau(x_ny^*_n),\quad (x_n)\subset L_p(\M), (y_n)\subset L_{p^\prime}(\M).$$
\end{prop}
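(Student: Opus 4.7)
The plan is to reduce everything to the duality of ordinary noncommutative $L_p$ spaces on an amplified von Neumann algebra by realising the column and row spaces as complemented corners of $L_p(\M\overline{\otimes}B(\ell_2))$, and then to handle the $rc$ case via the standard sum/intersection duality principle.

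First I would set up the isometric embeddings. For the column space I identify a finite sequence $(x_n)\subset L_p(\M)$ with the column matrix $\xi^c:=\sum_n x_n\otimes e_{n1}\in L_p(\M\overline{\otimes}B(\ell_2))$ and observe that $|\xi^c|^2=\sum_n |x_n|^2\otimes e_{11}$, so that
\begin{equation*}
\|(x_n)\|_{L_p(\M;\ell_2^c)}=\|\xi^c\|_{L_p(\M\overline{\otimes}B(\ell_2))}.
\end{equation*}
Thus $L_p(\M;\ell_2^c)$ is isometric to the subspace $L_p(\M\overline{\otimes}B(\ell_2))(1\otimes e_{11})$ of the matrix-level noncommutative $L_p$ space, which is the range of the completely contractive projection $z\mapsto z(1\otimes e_{11})$. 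Similarly $L_p(\M;\ell_2^r)$ is isometric to $(1\otimes e_{11})L_p(\M\overline{\otimes}B(\ell_2))$ via $(x_n)\mapsto\xi^r:=\sum_n x_n\otimes e_{1n}$.

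Next I would transport the classical duality $(L_p(\M\overline{\otimes}B(\ell_2)))^*=L_{p'}(\M\overline{\otimes}B(\ell_2))$ (Proposition given in any standard reference, see e.g.\ Pisier-Xu) through these corner embeddings. For the column case the surviving pairing is, with $\eta^c=\sum_n y_n\otimes e_{n1}$,
\begin{equation*}
(\tau\otimes\mathrm{tr})(\xi^c(\eta^c)^*)=\sum_n\tau(x_ny_n^*),
\end{equation*}
which gives $(L_p(\M;\ell_2^c))^*=L_{p'}(\M;\ell_2^c)$ with the asserted bracket; the row case is entirely analogous. The upper bound $|\langle(y_n),(x_n)\rangle|\le \|(x_n)\|_c\|(y_n)\|_c$ comes directly from the Cauchy--Schwarz / tracial Hölder inequality on $\M\overline{\otimes}B(\ell_2)$, and the reverse inequality from the fact that the corner is $1$-complemented, so every bounded functional on the corner extends to $L_p(\M\overline{\otimes}B(\ell_2))$ without loss of norm.

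Finally, for the $rc$ case I would invoke the Banach-space identities $(X\cap Y)^*=X^*+Y^*$ and $(X+Y)^*=X^*\cap Y^*$, which hold isometrically provided $(X,Y)$ form a compatible couple continuously embedded in a common Hausdorff topological vector space. Here $X=L_p(\M;\ell_2^c)$ and $Y=L_p(\M;\ell_2^r)$ are both continuously embedded in $L_p(\M;\ell_2^c)+L_p(\M;\ell_2^r)$ (or in $L_p(\M\overline{\otimes}B(\ell_2))$ after a natural embedding), and the pairing in the duality of the couple is consistent with the one already exhibited for the column and row spaces, namely $\sum_n\tau(x_ny_n^*)$. Combining the column/row duality with these sum/intersection identities yields $(L_p(\M;\ell_2^{rc}))^*=L_{p'}(\M;\ell_2^{rc})$ in both cases $p\ge 2$ and $1\le p<2$ (with $p'$ in the complementary range).

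The only genuine subtlety to verify is the compatibility of the anti-duality pairings on $L_p(\M;\ell_2^c)$ and $L_p(\M;\ell_2^r)$ so that the abstract sum/intersection duality produces the single bilinear form $\sum_n\tau(x_ny_n^*)$; this is a matter of checking that the pairing agrees on the intersection $L_p(\M;\ell_2^c)\cap L_p(\M;\ell_2^r)$, which is immediate because both are restrictions of the same tracial duality on $L_p(\M\overline{\otimes}B(\ell_2))$. Once this compatibility is recorded, the proof is complete, and as pointed out in the paragraph before the statement, the identification of norms with the usual column/row/$rc$ norms from \cite{P2} is straightforward.
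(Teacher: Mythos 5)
The paper offers no proof of Proposition~\ref{dual}: after recording the definition of $L_p(\M;\ell_2^{rc})$ the authors simply remark that ``one can easily see that the following basic properties \dots are also valid,'' pointing to \cite{P2}. So there is no authorial argument to compare against. Your corner-embedding route is the standard one and is essentially correct: identifying $(x_n)$ with $\sum_n x_n\otimes e_{n1}$ realizes $L_p(\M;\ell_2^c)$ isometrically as the $1$-complemented corner $L_p(\M\overline{\otimes}\mathcal B(\ell_2))(1\otimes e_{11})$, the tracial anti-duality on the amplified algebra restricts to corners, and the surviving pairing is $\sum_n\tau(x_n y_n^*)$; the row case and the reduction of the $rc$ case to a sum/intersection duality are handled the same way.

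Two points should be tightened. First, the paper's norm on $L_p(\M;\ell_2^{rc})$ is not the usual $\max$ (intersection) or $\ell_1$-infimal-sum norm but the $p$-th-power combination, namely $(\|\cdot\|_c^p+\|\cdot\|_r^p)^{1/p}$ for $p\ge 2$ and the infimum of the same quantity over decompositions for $1\le p<2$. The abstract duality you need is therefore the variant with $\ell_p$/$\ell_{p'}$ normalization on the intersection and sum, under which the identification is isometric for $p\ne 2$; at $p=2$ the intersection and sum norms differ by an absolute constant (harmless here, since $\ell_2^c=\ell_2^r$), so ``the standard sum/intersection duality principle'' should be invoked with this normalization made explicit. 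Second, the parenthetical suggestion that both $L_p(\M;\ell_2^c)$ and $L_p(\M;\ell_2^r)$ sit compatibly ``in $L_p(\M\overline{\otimes}\mathcal B(\ell_2))$ after a natural embedding'' is misleading: the first column and first row are essentially disjoint corners, meeting only at the $(1,1)$ entry, so they do not form a compatible couple there. The compatible embedding is the one you mention first, into the common space of $L_p(\M)$-valued sequences (or into the sum space), on which the term-by-term pairing $\sum_n\tau(x_n y_n^*)$ is manifestly consistent. With those adjustments the argument is exactly the one the paper is implicitly relying on.
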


The following noncommutative Khintchine inequalities will be frequently used. See e.g.  ~\cite{LP86,LP-Pi91,Pis98, C1} for the proof.
\begin{prop}\label{nonkin}
Let $(\varepsilon_n)$ be a sequence of independent Rademarcher random variables on a probability space $(\Omega, P)$. Let $1\le p<\8$ and $(x_n)$ be a sequence in $L_p(\M;\ell^{rc}_2)$. For $1\le p<\8$, there exist two positive constants $c_p$ and $C_p$ such that
\begin{equation*}
  c_p\|(x_n)\|_{L_p(\mathcal M;\ell^{rc}_2)}\le \bigg\|\sum_{n}\varepsilon_nx_n\bigg\|_{L_p(L_{\infty}(\Omega)\overline{\otimes}
\mathcal M)}\le  C_p\|(x_n)\|_{L_p(\mathcal M;\ell^{rc}_2)}.
\end{equation*}
The above estimate is still true if one replaces the $L_p$ spaces by the weak $L_{1}$ space. 


\end{prop}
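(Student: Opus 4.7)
The plan is to bootstrap from the trivial Hilbertian case $p=2$ to all exponents via moment expansion, interpolation, and duality. At $p=2$, the orthonormality of the Rademachers gives $\E\,\tau(|\sum_n\varepsilon_nx_n|^2)=\sum_n\tau(x_n^*x_n)=\|(x_n)\|_{L_2(\M;\ell_2^c)}^2$, and symmetrically $\|(x_n)\|_{L_2(\M;\ell_2^r)}^2$; since both column and row norms agree at $p=2$, both directions of the inequality hold with constant one.

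To reach even integer exponents $p=2m$, I would expand $\tau(|\sum_n\varepsilon_nx_n|^{2m})$ multinomially and integrate out the $\varepsilon_n$: only words in which each index occurs an even number of times survive, and the combinatorial regrouping of these words, originally due to Lust-Piquard, controls them by the column/row $L_{2m}(\M;\ell_2^{rc})$-norm, yielding the upper bound. For general $2\le p<\infty$ the estimate then follows by complex interpolation between $p=2$ and some $p=2m\ge p$, working in the noncommutative column/row $L_p$-scales.

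For $1\le p<2$ I would invoke Proposition \ref{dual}: the dual of the sum space $L_p(\M;\ell_2^{rc})$ is the intersection space $L_{p'}(\M;\ell_2^{rc})$ via the pairing $\langle(y_n),(x_n)\rangle=\sum_n\tau(x_ny_n^*)$. The upper bound on the intersection norm at $p'>2$ dualizes to a lower bound on the sum norm at $p<2$, and, conversely, the lower bound at $p'$ dualizes to the upper bound at $p$; the norming decomposition $x_n=y_n+z_n$ required to realize the sum norm is produced abstractly by Hahn-Banach applied to this pairing. The weak-$L_1$ assertion is then established separately by distributional estimates and a truncation/good-$\lambda$ argument in the spirit of the noncommutative Doob and maximal inequalities, since one can no longer appeal to Banach-space interpolation at the endpoint.

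The main obstacle I expect is the lower bound for $p>2$, equivalently the upper bound for $p<2$: the $p=2$ orthogonality argument does not deliver it, and naive interpolation yields only one direction. Here I would lean on the martingale/Schur-multiplier machinery of Lust-Piquard-Pisier, which has no commutative analogue and is the genuinely noncommutative ingredient of the statement; the weak-$L_1$ endpoint is even more delicate because one must work with quasi-norms and good-$\lambda$ truncations rather than with standard duality.
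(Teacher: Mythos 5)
The paper does not actually prove Proposition~\ref{nonkin}; it simply cites \cite{LP86,LP-Pi91,Pis98,C1}, so there is no internal proof to compare against. Your sketch has the right flavour (moment expansion at even exponents, interpolation, duality), but it contains a genuine conceptual error. You have the hard and easy directions reversed: for $p>2$ the \emph{lower} bound $c_p\|(x_n)\|_{L_p(\M;\ell_2^{rc})}\le\|\sum_n\varepsilon_nx_n\|_p$ is the easy one --- it follows directly from $\sum_n x_n^*x_n=\mathbb E_\varepsilon\big|\sum_n\varepsilon_nx_n\big|^2$, the triangle inequality in $L_{p/2}$ (here $p/2\ge1$ is used), and H\"older; no Lust-Piquard machinery is required --- while the genuinely difficult direction is the \emph{upper} bound with the intersection norm on the right, equivalently by duality the lower bound for $p<2$, which is where the row/column decomposition must be produced. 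Your closing paragraph identifies the obstruction as ``the lower bound for $p>2$, equivalently the upper bound for $p<2$,'' which is precisely the wrong pair and also contradicts the earlier structure of your own argument, where the combinatorial and interpolation work is correctly aimed at the upper bound at $p=2m$.

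Two further gaps remain. The duality passage breaks down at $p=1$: to run it you must bound $\|(g_n)\|_{L_{p'}(\ell_2^{rc})}$ by $\|g\|_{p'}$ for $g_n=\mathbb E(\varepsilon_ng)$, which requires the Rademacher projection to be bounded on $L_{p'}(L_\infty(\Omega)\overline\otimes\M)$; this $K$-convexity holds for $1<p'<\infty$ but fails at $p'=\infty$, so the $p=1$ case does not follow by duality and needs a direct construction as in \cite{LP-Pi91}. Finally, the weak-$L_1$ endpoint is dismissed with a remark about ``truncation/good-$\lambda$,'' but the real difficulty there is to exhibit, from distributional information on the single operator $\sum_n\varepsilon_nx_n$, a splitting $x_n=y_n+z_n$ with control of $\|(y_n)\|_{L_{1,\infty}(\ell_2^c)}$ and $\|(z_n)\|_{L_{1,\infty}(\ell_2^r)}$; a good-$\lambda$ argument gives no mechanism to construct such a decomposition, and the known proofs (e.g.\ \cite{C1}) use genuinely different noncommutative real-variable techniques such as Cuculescu projections rather than scalar stopping-time/good-$\lambda$ schemes.
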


\subsection{Noncommutative Calder{\'o}n-Zygmund decomposition}\quad

\medskip

In this subsection, we introduce the  noncommutative Calder{\'o}n-Zygmund decomposition developed in \cite{CCP}, whose construction is based on the noncommutative martingale theory. To this end, we first introduce the related notions. For each $n\in\N$, let $\I_n$ be the set of dyadic intervals with length of $2^n$ in $\mathbb{Z}$, that is each interval in $\I_n$ can be written as $[s2^n,(s+1)2^n)$, where $s$ is an integer.
Let $\sigma_n$ be the $n$-th $\sigma$-algebra generated by $\I_n$ and $\mathcal N_n=L_\infty(\mathbb \Z,\sigma_n)\overline{\otimes}\M$. Recall that $\mathcal N=L_\infty(\mathbb Z)\overline{\otimes}\M$. Then $(\mathcal{N}_n)_{n \in \N}$ is a sequence of decreasing von Neumann subalgebras of $\mathcal N$. 
Hence, $(\mathcal{N}_n)_{n \in \N}$ forms a filtration and the resulting conditional expectations $(\mathsf{E}_n)_{n\in\N}$ satisfy
\begin{equation*}
  \forall~m,n\in\N,\quad \mathsf{E}_m\mathsf{E}_n=\mathsf{E}_n\mathsf{E}_m=\mathsf{E}_{\max(m,n)}
\end{equation*}
 and for $f\in L_p(\mathcal N)$ with $1\le p<\8$,
 \begin{equation}\label{martingale}
  f_{n} :=\mathsf{E}_n(f)=\sum_{I \in \I_n}^{\null} f_I \chi_I,
 \end{equation}
where $\chi_I$ is the characteristic function of $I$ and
$$f_I = \frac{1}{|I|} \sum_{y\in I} f(y).$$
It is easy to check that $(f_{n})_{n\in\N}$ is a $L_p$-reverse martingale, namely $\sup_{n\in\N}\|f_{n}\|_{L_p(\mathcal N)}<\8$.
The resulting martingale difference sequence $df=(df_{n})_{n\in\N}$ is defined by $df_{n}=f_{n-1}-f_{n}$ for $n\ge 1$ and $df_0=0$.

To give the content of noncommutative Calder{\'o}n-Zygmund decomposition, consider
$$\mathcal N_{c,+} =\Big\{
f: \Z \to \M\cap L_1(\M) \, \big| \ f \geq0, \
\overrightarrow{\mathrm{supp}} \hskip1pt f \ \ \mathrm{is \
finite} \Big\},$$
which is dense in $L_1(\mathcal N)_{+}$. Here
$\overrightarrow{\mathrm{supp}}f=\mathrm{supp}\|f\|_{_{L_{1}(\M)}}$. Observe that for any given $f \in \mathcal{N}_{c,+}$ and $\lambda>0$, there exists $m_{\lambda}(f)\in\N$ such that $f_{n}\leq\lambda\1_{\mathcal{N}}$ for all $n\geq m_{\lambda}(f)$ (see \cite[Lemma 3.1]{JP1}), where $\1_{\mathcal{N}}$ denotes the unit
element in $\mathcal{N}$.

The following modified Cuculescu's theorem \cite{Cuc} was obtained in \cite[Lemma 3.1]{JP1}.
\begin{lem}\label{cucu}
Let $f \in \mathcal{N}_{c,+}$ and consider its related dyadic martingale $(f_n)_{n\in\N}$. Given $\lambda>0$, there exists an increasing sequence of projections $(q_n)_{n\in\N}$ defined by $q_n = \1_\mathcal{N}$ for $n\ge m_{\lambda}(f)$ and recursively for $n< m_{\lambda}(f)$
$$q_n=q_n(f,\lambda)=\chi_{(0,\lambda]}(q_{n+1} f_n q_{n+1})$$
such that the following conclusions hold:
\begin{enumerate}[\noindent]
	\item~\emph{(i)} $q_n$ commutes with $q_{n+1} f_n
	q_{n+1}$ for each $n$;
	\item~\emph{(ii)} $q_n$ belongs to $\mathcal{N}_n$ and $q_n f_n q_n \le \lambda \hskip1pt
	q_n$ for each $n$;
	\item~\emph{(iii)} set $q_0:=q=\bigwedge_{n=0}^{m_{\lambda}(f)} q_n$, then $\lambda\varphi(
	\mathbf{1}_\mathcal{N} - q) \le\|f\|_1.$  
\end{enumerate}
\end{lem}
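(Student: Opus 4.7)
The plan is to verify the three conclusions in turn, working with the downward recursion $q_n=\chi_{(0,\lambda]}(q_{n+1}f_nq_{n+1})$ and the boundary convention $q_n=\1_\mathcal{N}$ for $n\ge m_\lambda(f)$. Part (i) is immediate from the Borel functional calculus, since $q_n$ is by definition a spectral projection of the self-adjoint operator $q_{n+1}f_nq_{n+1}$ and therefore commutes with it.

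For (ii), I would proceed by downward induction on $n$. The inductive hypothesis $q_{n+1}\in\mathcal{N}_{n+1}$ together with the decreasing character of the filtration ($\mathcal{N}_{n+1}\subseteq\mathcal{N}_n$) and $f_n\in\mathcal{N}_n$ yields $q_{n+1}f_nq_{n+1}\in\mathcal{N}_n$, and hence $q_n\in\mathcal{N}_n$. The key auxiliary step is the monotonicity $q_n\le q_{n+1}$: writing $A:=q_{n+1}f_nq_{n+1}\ge 0$, the range of $A$ lies in $q_{n+1}\mathcal{H}$, so $\chi_{\{0\}}(A)\ge\1_\mathcal{N}-q_{n+1}$ and therefore $q_n=\chi_{(0,\lambda]}(A)\le q_{n+1}$. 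Using (i) and this monotonicity, $q_n f_n q_n=q_n A\le\lambda q_n$ by the spectral theorem.

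For (iii), the chain $q_0\le q_1\le\cdots\le q_{m_\lambda(f)}=\1_\mathcal{N}$ gives $q=q_0$ and the telescoping identity
\[
\1_\mathcal{N}-q=\sum_{n=0}^{m_\lambda(f)-1}(q_{n+1}-q_n),
\]
a sum of pairwise orthogonal projections, each of which lies in $\mathcal{N}_n$. Since $q_{n+1}-q_n$ is precisely the spectral projection of $A=q_{n+1}f_nq_{n+1}$ on $(\lambda,\infty)$, it satisfies $(q_{n+1}-q_n)A\ge\lambda(q_{n+1}-q_n)$. Taking $\varphi$-traces, using $(q_{n+1}-q_n)q_{n+1}=q_{n+1}-q_n$, and invoking the module and trace-preserving properties of $\mathsf{E}_n$ (legitimate because $q_{n+1}-q_n\in\mathcal{N}_n$), I obtain
\[
\lambda\,\varphi(q_{n+1}-q_n)\le\varphi\bigl((q_{n+1}-q_n)f_n\bigr)=\varphi\bigl((q_{n+1}-q_n)f\bigr).
\]
Summing over $n$ and using positivity of $qf$ then gives $\lambda\,\varphi(\1_\mathcal{N}-q)\le\varphi((\1_\mathcal{N}-q)f)\le\varphi(f)=\|f\|_1$.

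No single step is a genuine obstacle; the argument is a careful transcription of Cuculescu's classical projection construction to the reverse-filtration setting. The one point demanding attention is that here the monotonicity reads $q_n\le q_{n+1}$ (opposite to the usual forward dyadic case), and this is exactly what makes the telescoping estimate in (iii) work.
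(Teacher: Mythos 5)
Parts (i) and (ii) of your argument are fine, but the pivotal step in (iii) is not valid with the $\chi_{(0,\lambda]}$ convention as written. Setting $A_n=q_{n+1}f_nq_{n+1}$, the decomposition $\1_{\mathcal N}=\chi_{\{0\}}(A_n)+\chi_{(0,\lambda]}(A_n)+\chi_{(\lambda,\infty)}(A_n)$ together with the fact that $q_{n+1}$ commutes with $A_n$ (hence with all its spectral projections) gives
\[
q_{n+1}-q_n \;=\; q_{n+1}\chi_{\{0\}}(A_n)\;+\;\chi_{(\lambda,\infty)}(A_n),
\]
not just $\chi_{(\lambda,\infty)}(A_n)$. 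The extra summand is the projection onto $\ker f_n\cap q_{n+1}\mathcal H$; it is annihilated by $A_n$, so the spectral theorem only yields $(q_{n+1}-q_n)A_n\ge\lambda\,\chi_{(\lambda,\infty)}(A_n)$, which is strictly weaker than the $(q_{n+1}-q_n)A_n\ge\lambda(q_{n+1}-q_n)$ you assert, and the chain of inequalities for (iii) breaks. The gap is not cosmetic: take $\M=\mathbb{C}$ (so $\mathcal N=L_\infty(\Z)$), $f=\chi_{\{0\}}$ and $\lambda=1/2$. Then $f_n=2^{-n}\chi_{I_{0,n}}$, $m_\lambda(f)=1$, $q_1=\1_{\mathcal N}$ and $q_0=\chi_{(0,1/2]}(\chi_{\{0\}})=0$, so $\lambda\,\varphi(\1_{\mathcal N}-q)=\infty$ while $\|f\|_1=1$; conclusion (iii) fails as literally stated.

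The culprit is the $\chi_{(0,\lambda]}$ in the recursion, which is a slip in the paper's statement (the paper cites \cite[Lemma~3.1]{JP1} and gives no proof of its own, so there is nothing to compare your argument against). The usable formulation is $q_n=q_{n+1}\chi_{[0,\lambda]}(q_{n+1}f_nq_{n+1})$, a projection because the two factors commute. With that definition one still has $q_n\le q_{n+1}$, and now
\[
q_{n+1}-q_n \;=\; q_{n+1}\chi_{(\lambda,\infty)}(A_n)\;=\;\chi_{(\lambda,\infty)}(A_n),
\]
the last equality since the range of $A_n$ lies inside $q_{n+1}\mathcal H$ --- which is exactly the identity your proof of (iii) needs. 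Modulo swapping in this corrected definition, the rest of your argument is sound: the telescoping sum $\1_{\mathcal N}-q=\sum_n(q_{n+1}-q_n)$, the passage from $f_n$ to $f$ via $q_{n+1}-q_n\in\mathcal N_n$ and the trace--conditional-expectation identity, and the final step $\varphi((\1_{\mathcal N}-q)f)\le\varphi(f)$ (which in fact rests on $\varphi(qf)=\varphi(f^{1/2}qf^{1/2})\ge 0$ rather than on positivity of $qf$ itself).
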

In fact, for each $n$, $q_n$ admits the following expression (see e.g. \cite{JP1})
$$q_{n}=\sum_{I\in\I_{n}}q_{I}\chi_{I},$$
with $q_{I}$ projections in $\M$ defined by
$$q_{I}=\begin{cases} \1_\M & \mbox{if} \ n\ge m_{\lambda}(f),
\\ \chi_{(0,\lambda]} \big( q_{\widehat{I}} f_I q_{\widehat{I}}\big) & \mbox{if} \ 0\le n < m_{\lambda}(f),
\end{cases}$$
where $\widehat{I}$ is the dyadic father of $I$. Accordingly, these projections satisfy
\begin{equation}\label{czd5}
\  q_I\leq q_{\widehat{I}},\ \
\  q_I\  \mbox{commutes\ with}\  q_{\widehat{I}} f_I q_{\widehat{I}},\ \
\   q_I f_I q_I \le \lambda q_I.
\end{equation}
If we define the sequence $(p_n)_{n}$ of pairwise disjoint projections by
\begin{equation}\label{czd1}
p_n=q_{n+1}-q_n=\sum_{I\in\I_{n}}(q_{\widehat{I}}-q_{I})\chi_{I}\triangleq\sum_{I\in\I_{n}}p_I\chi_{I}
\end{equation}
for each $n$, then
\begin{equation}\label{346679}
\sum_{n} p_n = \1_\mathcal{N} - q =q^\perp
\end{equation}
and for each $n$
\begin{equation}\label{czd1123}
\|p_nf_{n}p_n\|_{\infty}\leq2\lambda.
\end{equation}

Based on the previous notation, the noncommutative analogue for the Calder{\'o}n-Zygmund decomposition was recently found by Caldilhac et al \cite{CCP}.
\begin{prop}\label{czdecom}
Fix $f\in\mathcal N_{c,+}$ and $\lambda>0$. Let $(q_n)_{n}$ and $(p_n)_{n}$ be the two sequences of projections appeared in the above Cuculescu's construction. Then there exist a projection $\zeta\in \mathcal N$ defined by
\begin{equation}\label{czd9}
\zeta = \big(\bigvee_{I\in \I} p_I\chi_{5I}\big)^{\bot},
\end{equation}
where $5I$ denotes the interval with the same center as $I$ with length $|5I|=5|I|$,
and a decomposition of $f$,
\begin{equation}\label{czd76789}
f = g + b
\end{equation}
such that the following assertions hold.

\begin{enumerate}[\noindent]
\item~\emph{(i)} $\lambda\varphi(\mathbf 1_\mathcal{N}-\zeta) \leq 5\|f\|_1$.
\item~\emph{(ii)} $g=qfq+\sum_{n}p_{n}f_{n}p_{n}$ satisfies
$\| g\|_1 \le\|f\|_1 \quad \mbox{and} \quad \|
g\|_\infty \le 2\lambda$.
\item~\emph{(iii)} $b=\sum_{n}b_{n}$, where
\begin{equation}\label{czd6}
b_{n}=p_n (f-f_{n}) q_n+q_{n+1}(f-f_{n})p_n.
\end{equation}
Each $b_{n}$ satisfies two cancellation conditions:
     $\mathsf{E}_n b_{n} = 0$; and
    for all $x,y\in\Z$ with $y \in 5I_{x,n}$, $\zeta(x)b_{n}(y)\zeta(x) = 0$, where $I_{x,n}$ is the unique interval in $\I_n$ containing $x$.
\end{enumerate}
\end{prop}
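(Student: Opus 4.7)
The argument proceeds in four stages: (a) verify the algebraic identity $f = g + \sum_n b_n$; (b) bound $\varphi(\mathbf{1}_{\mathcal N} - \zeta)$, yielding (i); (c) control $\|g\|_\infty$ and $\|g\|_1$, yielding (ii); (d) check the two cancellation properties of each $b_n$, yielding (iii). The crucial technical input driving (a) and (c) is a hidden consequence of Lemma~\ref{cucu}(i): since $q_n$ commutes with $q_{n+1} f_n q_{n+1}$, expanding $q_{n+1} = q_n + p_n$ gives $q_n f_n q_{n+1} = q_{n+1} f_n q_n$; right-multiplying by $p_n$ and using $q_n p_n = 0$ together with $q_{n+1} p_n = p_n$ produces the orthogonality
$q_n f_n p_n = 0 = p_n f_n q_n$.

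For (a), expanding $f = (q + \sum_n p_n) f (q + \sum_m p_m)$ via \eqref{346679} and unfolding each $b_n$ through $q_{n+1} = q_n + p_n$ reveals that the cross-terms $\pm p_n f_n q_n$ and $\pm q_n f_n p_n$ cancel thanks to the vanishing above, leaving $g + \sum_n b_n = qfq + \sum_n p_n f q_n + \sum_n q_{n+1} f p_n = f$. For (b), subadditivity of $\varphi$ on the projections defining $\zeta^\perp$ in \eqref{czd9} gives
\[
\varphi(\mathbf{1}_{\mathcal N} - \zeta) \leq \sum_n \sum_{I\in\I_n} \tau(p_I)\,|5I| = 5\sum_n \varphi(p_n) = 5\,\varphi(\mathbf{1}_{\mathcal N} - q),
\]
and Lemma~\ref{cucu}(iii) then yields $\lambda\varphi(\mathbf{1}_{\mathcal N} - \zeta) \le 5\|f\|_1$.

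For (c), the pairwise orthogonality of $q, p_0, p_1,\ldots$ makes $g$ block-diagonal with respect to these projections, so $\|g\|_\infty = \max(\|qfq\|_\infty, \sup_n \|p_n f_n p_n\|_\infty)$. Since $\I_0$ consists of singletons of $\mathbb Z$, we have $\mathcal N_0 = \mathcal N$ and $f_0 = f$; combined with $q \le q_0$ and Lemma~\ref{cucu}(ii) this gives $qfq \le \lambda q$, so $\|qfq\|_\infty \le \lambda$, and together with \eqref{czd1123} we get $\|g\|_\infty \le 2\lambda$. The same block-diagonal structure (using $f \ge 0$) yields $\|g\|_1 = \varphi(qfq) + \sum_n \varphi(p_n f_n p_n)$. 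Cyclicity, the cross-term vanishing (which gives $q_{n+1} f_n q_{n+1} = q_n f_n q_n + p_n f_n p_n$), and the decreasing filtration (so $q_n, q_{n+1} \in \mathcal N_n$ and $\varphi(f_n q_k) = \varphi(f q_k)$), combine to produce the telescoping identity $\varphi(p_n f_n p_n) = \varphi(f q_{n+1}) - \varphi(f q_n) = \varphi(f p_n)$. Summing gives $\|g\|_1 = \varphi(fq) + \varphi(f(\mathbf{1}_{\mathcal N}-q)) = \|f\|_1$.

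For (d), the mean-zero identity $\mathsf E_n b_n = 0$ is immediate: $p_n, q_n, q_{n+1} \in \mathcal N_{n+1} \subset \mathcal N_n$, so $\mathsf E_n$ passes through the products and hits $\mathsf E_n(f - f_n) = 0$. For the geometric cancellation, fix $y \in 5I_{x,n}$ and set $J = I_{y,n} \in \I_n$; since $I_{x,n}$ and $J$ both have length $2^n$, the displacement of their centers is an integer multiple of $2^n$, and the overlap $J \cap 5I_{x,n} \neq \emptyset$ forces this multiple to lie in $\{-2,-1,0,1,2\}$. A direct case-check shows $I_{x,n} \subset 5J$ in each case, so $\chi_{5J}(x) = 1$ and hence $p_J \le \zeta^\perp(x)$, i.e.\ $\zeta(x) p_n(y) = p_n(y)\zeta(x) = 0$; sandwiching $b_n(y)$ between $\zeta(x)$'s then kills both of its summands. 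The main subtlety I foresee is stage (a): the asymmetric shapes of $g$ and $b_n$ recombine to $f$ only because of the hidden orthogonality $q_n f_n p_n = 0$; once that is noticed, both the decomposition and the telescoping identity used for $\|g\|_1$ follow by direct book-keeping.
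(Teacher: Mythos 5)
Your proof is correct. Note that the paper does not prove this proposition but cites it from Cadilhac--Conde-Alonso--Parcet \cite{CCP} (ultimately following Parcet \cite{JP1}); your argument reproduces the standard derivation in that line of work.

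A few remarks on the details, all of which you handle correctly. The ``hidden orthogonality'' $q_nf_np_n=0=p_nf_nq_n$ is indeed the crux, and your derivation via commutation of $q_n$ with $q_{n+1}f_nq_{n+1}$ followed by right-multiplication by $p_n$ is exactly the short route (the paper silently uses the same identity, in the form $p_nq_{n+1}f_nq_{n+1}q_n=p_nq_nq_{n+1}f_nq_{n+1}=0$, when rewriting $b_n$ in Lemma~\ref{bad-1}). Your computation $g+\sum_n b_n = qfq+\sum_n p_nfq_n+\sum_n q_{n+1}fp_n$ does recombine to $f$ once one expands $q_n=q+\sum_{k<n}p_k$ and $q_{n+1}=q+\sum_{k\le n}p_k$ and observes that the two triangular double sums fill in, together with the diagonal, the full square $\sum_{k,m}p_kfp_m$. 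The observation $q=q_0$, $f_0=f$ (since $\I_0$ consists of singletons so $\mathcal N_0=\mathcal N$) is what makes $\|qfq\|_\infty\le\lambda$ immediate from Lemma~\ref{cucu}(ii), and the block-diagonal structure of $g$ with respect to the pairwise orthogonal family $\{q,p_0,p_1,\dots\}$ justifies both the $L_\infty$ maximum and the additivity of $\|\cdot\|_1$. Your telescoping for $\|g\|_1$ is fine, though it can be shortened: $\varphi(p_nf_np_n)=\varphi(f_np_n)=\varphi(fp_n)$ by traciality of $\varphi\circ\mathsf E_n$ and $p_n\in\mathcal N_n$, with no need to invoke the cross-term identity at this point. (An even quicker route to $\|g\|_1=\|f\|_1$: $g\ge0$ so $\|g\|_1=\varphi(g)=\varphi(f)-\varphi(b)$, and $\varphi(b_n)=\varphi(\mathsf E_n b_n)=0$.) Finally, for the geometric cancellation, the integer arithmetic in $\mathbb Z$ does require the care you exhibit: with $I_{x,n}=[s2^n,(s+1)2^n)$, $5I_{x,n}=[(s-2)2^n,(s+3)2^n)$, and $y\in5I_{x,n}$ forces $J=I_{y,n}=[t2^n,(t+1)2^n)$ with $|t-s|\le2$, whence $I_{x,n}\subset5J$ and $\zeta(x)p_J=p_J\zeta(x)=0$, annihilating both summands of $b_n(y)$ after conjugation by $\zeta(x)$.
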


\section{Proof of Theorem~\ref{Th5}: one reduction}\label{redu}\quad
To prove Theorem \ref{Th5}, we give a reduction in the present section. Motivated by the study of the variational inequalities \cite{Bour89}, we split the square function into the `long one' and the `short one'. To be more precise, fix an increasing sequence $(n_{i})_{i\in\N}$ and let $(A_{n_i})_{i\in\N}$ be the associated nested sequence. For an interval $I_i=[n_{i},n_{i+1})$, one can see that there are two cases:
\begin{enumerate}[(1)]
	\item [$\bullet$] Case 1: $I_i$ contains no dyadic point, that is, for any $k\in\mathbb N$, $2^k\notin I_i$;
	\item [$\bullet$] Case 2: $I_i$ contains at least one dyadic point $2^k$ for $k\in\N$.
\end{enumerate}
According to the above classification, for each interval $I_i=[n_{i},n_{i+1})$, we split it into at most three disjoint parts
\begin{equation}\label{decomposition}
  I_i:=[n_i,\tilde{n}_i)\cup[\tilde{n}_i, \tilde{\tilde{n}}_{i})\cup[{\tilde{\tilde{n}}}_{i},n_{i+1})
\end{equation}
by the law: if $I_i$ belongs to Case 1, then set $\tilde{n}_i=\tilde{\tilde{n}}_{i}=n_{i+1}$; if $I_i$ belongs to Case 2, we set
$\tilde{n}_i=2^{k_{i}}:=\min\{2^{k}:2^{k}\in I_i\}$ and $\tilde{\tilde{n}}_{i}=2^{l_{i}}:=\max\{2^{k}:2^{k}\in \bar{I_i}\}$ where $\bar{I_i}$ is the closure of $I_i$.

By above decomposition of intervals and using the quasi-triangle inequality for weak $L_1$ norm $\|\cdot\|_{L_{1,\8}(\mathcal{N}; \ell_{2}^{rc})}$, we have
\begin{equation}\label{long-short}
\begin{split}
  &\|(M_{A_{n_{i}}}f-M_{A_{n_{i+1}}}f)_{i\in\N}\|_{L_{1,\8}(\mathcal{N};\ell_{2}^{rc})}\\
  &\le 3\|(M_{A_{n_{i}}}f-M_{A_{\tilde{n}_i}}f)_{i\in\N}\|_{L_{1,\8}(\mathcal{N};\ell_{2}^{rc})}+3\|(M_{A_{\tilde{n}_{i}}}f
  -M_{A_{\tilde{\tilde{n}}_{i}}}f)_{i\in\N}\|_{L_{1,\8}(\mathcal{N};\ell_{2}^{rc})}\\
  &\ \ \ \ +3\|(M_{A_{\tilde{\tilde{n}}_{i}}}f-M_{A_{n_{i+1}}}f)_{i\in\N}\|_{L_{1,\8}(\mathcal{N};\ell_{2}^{rc})}.
\end{split}
\end{equation}

On the other hand, by~\eqref{decomposition}, we introduce two collections of intervals with respect to $\{[n_i,n_{i+1})\}_i$:
\begin{enumerate}[(1)]
\item [$\bullet$] ${\mathrm S}$ consists of all intervals $I_i$ belonging to Case 1, or $[{n}_{i},{\tilde{n}}_{i})$, $[{\tilde{\tilde{n}}}_{i},n_{i+1})$ in~\eqref{decomposition}. 
 	\item [$\bullet$] ${\mathrm L}$ consists of all intervals $[\tilde{n}_i, \tilde{\tilde{n}}_{i})$ in~\eqref{decomposition}.
\end{enumerate}
It is not difficult to check that $\mathrm L\cup \mathrm S$ is a disjoint family of intervals and forms a finer partition of $\big\{[n_{i},n_{i+1})\big\}_{i\in\mathbb N}$. By~\eqref{long-short}, we have
\begin{equation}\label{lo-sh}
\begin{split}
  &\|(M_{A_{n_{i}}}f-M_{A_{n_{i+1}}}f)_{i\in\N}\|_{L_{1,\8}(\mathcal{N};\ell_{2}^{rc})}\\
  &\le
  3\|(M_{A_{\tilde{n}_i}}f-M_{A_{\tilde{\tilde{n}}_{i:}}}f)_{i:[\tilde{n}_i,\tilde{\tilde{n}}_{i})\in L}\|_{L_{1,\8}(\mathcal{N};\ell_{2}^{rc})}\\
  &\ \ +6\|(M_{A_{m_{i}}}f-M_{A_{\widetilde{m}_{i}}}f)_{i:[m_i,\widetilde{m}_i)\in\mathrm S}\|_{L_{1,\8}(\mathcal{N};\ell_{2}^{rc})}.
\end{split}
\end{equation}
We now focus on the first term on the right hand side of the above inequality. Fix an interval $[\tilde{n}_i,\tilde{\tilde{n}}_{i})$. By~\eqref{decomposition}, we write $[\tilde{n}_i,\tilde{\tilde{n}}_{i})=[2^{k_{i}},2^{l_{i}})$. Decompose 	
\begin{align*}
 (M_{A_{2^{k_{i}}}}f-M_{A_{2^{l_{i}}}}f)=(M_{A_{2^{k_{i}}}}f-\mathsf{E}_{k_i}f)
 +(\mathsf{E}_{k_i}f-\mathsf{E}_{l_{i}}f)+(\mathsf{E}_{l_{i}}f-M_{A_{2^{l_i}}}f),
\end{align*}
where $(\mathsf{E}_{k})_k$ is the dyadic conditional expectations defined in the preliminary section. As a consequence, there exists a sequence of positive integers $k_0<l_0\leq k_1<l_1< \dotsm\leq k_i<l_i\leq\dotsm$ such that
\begin{equation*}
\begin{split}
  &\|(M_{A_{\tilde{n}_i}}f-M_{A_{\tilde{\tilde{n}}_{i}}}f)_{i:[\tilde{n}_i,\tilde{\tilde{n}}_{i})\in\mathrm L}\|_{L_{1,\8}(\mathcal{N};\ell_{2}^{rc})}\le3\|(M_{A_{2^{k_{i}}}}f-\mathsf{E}_{k_i}f)_{{i}}\|_{L_{1,\8}(\mathcal{N};\ell_{2}^{rc})}\\
  &+3\|(\mathsf{E}_{k_i}f-\mathsf{E}_{l_{i}}f)_{{i}}\|_{L_{1,\8}(\mathcal{N};\ell_{2}^{rc})}
  +3\|(M_{A_{2^{l_{i}}}}f-\mathsf{E}_{l_{i}}f)_{{i}}\|_{L_{1,\8}(\mathcal{N};\ell_{2}^{rc})}.
\end{split}
\end{equation*}
Using the fact that $n\mapsto(\sum_{k=1}^n|x_k|^2)^{\frac12}$ is increasing, one easily checks that
\begin{equation}\label{long}
\begin{split}
  &\|(M_{A_{\tilde{n}_i}}f-M_{A_{\tilde{\tilde{n}}_{i}}}f)_{i:[\tilde{n}_i,\tilde{\tilde{n}}_{i})\in\mathrm L}\|_{L_{1,\8}(\mathcal{N};\ell_{2}^{rc})}\\
  &\le3\|(\mathsf{E}_{k_i}f-\mathsf{E}_{l_{i}}f)_{{i}}\|_{L_{1,\8}(\mathcal{N};\ell_{2}^{rc})}+6\|(M_{A_{2^k}}f-\mathsf{E}_{k}f)_{k\in\Z}\|_{L_{1,\8}(\mathcal{N};\ell_{2}^{rc})}.
\end{split}
\end{equation}
Together with~\eqref{lo-sh} and~\eqref{long}, we obtain
\begin{equation}\label{divide}
\begin{split}
  &\|(M_{A_{n_{i}}}f-M_{A_{n_{i+1}}}f)_{i\in\N}\|_{L_{1,\8}(\mathcal{N};\ell_{2}^{rc})}\\
  &\le 6\|(M_{A_{m_{i}}}f-M_{A_{\widetilde{m}_{i}}}f)_{i:[m_i,\widetilde{m}_i)\in\mathrm S}\|_{L_{1,\8}(\mathcal{N};\ell_{2}^{rc})}\\
  &\ \ \ +9\|(\mathsf{E}_{k_i}f-\mathsf{E}_{l_{i}}f)_{{i}}\|_{L_{1,\8}(\mathcal{N};\ell_{2}^{rc})}+18\|(M_{A_{2^k}}f-\mathsf{E}_{k}f)_{k\in\Z}\|_{L_{1,\8}(\mathcal{N};\ell_{2}^{rc})}.
\end{split}
\end{equation}
\begin{remark}\label{ture}{\rm
By applying the same arguments as above,  it is clear that there exist similar dominations as \eqref{divide} for $\|\cdot\|_{L_p(\mathcal{N}; \ell_{2}^{rc})}$ and $\|\cdot\|_{\mathrm{BMO}_{d}(\mathcal{R})}$ via the triangle inequalities with possibly different constants.}
\end{remark}

Concerning the sequence $(M_{A_{2^k}}f-\mathsf{E}_{k}f)_{k\in\Z}$, the first and third authors~\cite{HX} established the following results.
\begin{lem}[\cite{HX}]\label{long-ineq}
 Let $1\le p\le \8$. Set $L_kf=M_{A_{2^k}}f-\mathsf{E}_{k}f$. Then the following assertions are true with a
positive constant $C_p$ depending only on $p$:
\begin{enumerate}[\noindent]
\item~\emph{(i)} for $p=1$, $$\|(L_kf)_{k\in\Z}\|_{L_{1,\infty}(\mathcal{N};
\ell_{2}^{rc})}\le  C_{p}\|f\|_{1},\; \forall f\in L_{1}(\mathcal N);$$
\item~\emph{(ii)} for $p=\infty$,
\begin{align*}
\Big\|\sum_{{k\in\Z}} L_kf\otimes e_{1k}
\Big\|_{\mathrm{BMO}_{d}(\mathcal{R})}+
\Big\|\sum_{{k\in\Z}} L_kf \otimes e_{k1}
\Big\|_{\mathrm{BMO}_{d}(\mathcal{R})}\le  C_{p}\,\|f\|_\infty,\; \forall f\in L_{\infty}(\mathcal N);
\end{align*}
\item~\emph{(iii)} for $1<p<\infty$, $$\|(L_kf)_{k\in\Z}\|_{L_p(\mathcal{N};
\ell_{2}^{rc})}\le  C_{p} \|f\|_{p}, \; \forall f\in L_{p}(\mathcal N).$$
\end{enumerate}
\end{lem}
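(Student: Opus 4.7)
The plan is to view $L_k = M_{A_{2^k}} - \mathsf{E}_k$ as a Littlewood--Paley-type difference operator at the dyadic scale $2^k$: its kernel is piecewise constant, mean-zero on each dyadic interval of length $2^k$, and essentially supported at scale $2^k$. Thus $(L_k)_k$ is almost orthogonal and the square function behaves like a noncommutative Littlewood--Paley $g$-function. I will establish the three endpoints---strong $L_2$, weak $(1,1)$, and $L_\infty \to \mathrm{BMO}_d$---independently and then interpolate. Row and column versions are handled symmetrically using the noncommutative Khintchine inequalities of Proposition~\ref{nonkin} and the duality of Proposition~\ref{dual}, so I focus on the column case.

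For $p = 2$, the $\ell_2^c$ norm reduces to $\sum_k \|L_k f\|_2^2$, and I would prove a Cotlar--Stein estimate $\|L_j^* L_k\|_{L_2 \to L_2} + \|L_j L_k^*\|_{L_2 \to L_2} \lesssim 2^{-\varepsilon|j-k|}$. The key observation is that for $j < k$ the operator $\mathsf{E}_k M_{A_{2^j}}$ differs from $\mathsf{E}_k$ only by a boundary effect of relative size $2^{j-k}$, and by symmetry the reverse composition produces a similar tail, so distant pairs of scales interact only through geometrically small boundary terms.

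For the weak-type $(1,1)$ endpoint I apply Proposition~\ref{czdecom} at level $\lambda$ to split $f = g + b$ with projection $\zeta$ satisfying $\lambda \varphi(\mathbf 1 - \zeta) \lesssim \|f\|_1$. The good part is controlled by Chebyshev and Step 1 via $\|g\|_2^2 \le \|g\|_\infty \|g\|_1 \lesssim \lambda \|f\|_1$. For $b = \sum_n b_n$ I exploit the two cancellation conditions $\mathsf{E}_n b_n = 0$ and $\zeta(v) b_n(y) \zeta(v) = 0$ for $y \in 5I_{v,n}$. Expanding $L_k b_n$ against the piecewise-constant kernel of $L_k$, the mean-zero property of $b_n$ at scale $2^n$ together with the scale-$2^k$ structure of that kernel yields an off-diagonal estimate of H\"ormander type; after compressing by $\zeta$ on both sides the column-valued double sum in $(k,n)$ converges geometrically. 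For the $L_\infty \to \mathrm{BMO}_d$ endpoint, on each dyadic interval $I$ I split $f = f\chi_{5I} + f\chi_{(5I)^c}$: the local piece is handled by the $L_2$ bound, while for the far piece $L_k$ either has support disjoint from $5I$ (when $2^k \lesssim |I|$) or contributes a nearly constant average on $I$ with geometrically small oscillation (when $2^k \gtrsim |I|$). Real interpolation between the weak $(1,1)$ and $L_2$ estimates covers $1 < p \le 2$, and the noncommutative $\mathrm{BMO}_d$--$L_p$ complex interpolation of~\cite{JMX} covers $2 \le p < \infty$.

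The hardest step will be the non-smooth Calder\'on--Zygmund estimate for $b$ in the weak-$(1,1)$ bound: the kernel of $L_k$ is only piecewise constant, so classical pointwise H\"ormander smoothness is unavailable, and one must extract the off-diagonal decay purely from the dyadic mean-zero property of $b_n$ together with the $5I$-enlargement built into $\zeta$, applied to a column-valued target. This is precisely the semi-commutative non-smooth kernel machinery of~\cite{CCP}, adapted to square-function estimates.
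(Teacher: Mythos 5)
The paper does not actually prove Lemma~\ref{long-ineq}: it is imported verbatim from \cite{HX}, the earlier work of the first and third authors, and the present paper only proves the parallel ``short-scale'' Theorem~\ref{t5343232} in Sections~\ref{weak}--\ref{strong} by exactly this route ($L_2$ via almost orthogonality against the dyadic filtration, weak $(1,1)$ via the noncommutative Calder\'on--Zygmund decomposition with the cancellations of $b_n$ and the $\zeta$-compression, $L_\infty\to\mathrm{BMO}_d$ via a local/far split on dyadic intervals, then real and complex interpolation). Your outline therefore reproduces essentially the same approach as the cited source and the paper's companion argument; the only cosmetic remarks are that the $L_2$ bound is more transparently done by decomposing $f$ in reverse martingale differences $d_n$, showing $\|L_k d_n\|_2\lesssim 2^{-\varepsilon|n-k|}\|d_n\|_2$ and applying Young's inequality as in Lemma~\ref{L1} rather than invoking Cotlar--Stein, that \cite{HX} predates \cite{CCP} and uses Parcet's decomposition \cite{JP1}, and that the $\mathrm{BMO}_d$--$L_p$ interpolation is the one in \cite{Mu} rather than \cite{JMX}.
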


On the other hand, $(\mathsf{E}_{k_i}f-\mathsf{E}_{l_{i}}f)_{{i}}$ forms a new sequence of martingale differences, and the dyadic martingale analogue of Lemma \ref{long-ineq} have been established in \cite{Ran1}. See also \cite{PX, Ran1,Ran2} for more on noncommutative Burkholder-Gundy inequalities.

Hence, we give our main efforts to the  first term on the right hand side of \eqref{divide}, namely the sequence $(M_{A_{m_{i}}}f-M_{A_{\widetilde{m}_{i}}}f)_{i:[m_i,\widetilde{m}_i)\in\mathrm S}$. We denote, by abuse of notation, the sequence $\{m_0,\widetilde{m}_0,m_1,\dotsm, m_i,\widetilde{m}_i,\dotsm\}$ as  $\{m_0,m_1, m_2\dotsm, m_i,m_{i+1},\dotsm\}$. We abbreviate each interval $[s_i,s_{i+1})$ to $i$ and denote the collection of such $i$ by $\mathcal S$. Setting $T_if=M_{A_{m_{i}}}f-M_{A_{m_{i+1}}}f$, and thus we get $(T_if)_{i\in\mathcal S}$ standing for
 $(M_{A_{m_{i}}}f-M_{A_{\widetilde{m}_{i}}}f)_{i:[m_i,\widetilde{m}_i)\in\mathrm S}$.
 Combining \eqref{divide}, Remark~\ref{ture}, Lemma~\ref{long-ineq} with the fact that $n\mapsto(\sum_{k=1}^n|x_k|^2)^{\frac12}$ is increasing, to establish Theorem \ref{Th5}, it suffices to show the following result.

\begin{thm}\label{t5343232}
Let $\mathcal S$ and $(T_if)_{i\in\mathcal S}$ be defined as above. Let $1\le p\le \8$. Then the following assertions are true with a
positive constant $C_p$ depending only on $p$:
\begin{enumerate}[\noindent]
\item~\emph{(i)} for $p=1$, $$\|(T_{i}f)_{i\in\mathcal{S}}\|_{L_{1,\infty}(\mathcal{N};
\ell_{2}^{rc})}\le  C_{p}\|f\|_{1},\; \forall f\in L_{1}(\mathcal N);$$
\item~\emph{(ii)} for $p=\infty$, $$\Big\|
\sum_{i\in\mathcal{S}} T_{i}f \otimes e_{1i}
\Big\|_{\mathrm{BMO}_{d}(\mathcal{R})} +
\Big\|\sum_{i\in\mathcal{S}} T_{i}f \otimes e_{i1}
\Big\|_{\mathrm{BMO}_{d}(\mathcal{R})} \le  C_{p}\,
\|f\|_\infty,\; \forall f\in L_{\infty}(\mathcal N);$$
\item~\emph{(iii)} for $1<p<\infty$, $$\| (T_{i}f)_{i\in\mathcal{S}}\|_{L_p(\mathcal{N};
\ell_{2}^{rc})}\le  C_{p} \|f\|_{p}, \; \forall f\in L_{p}(\mathcal N).$$
\end{enumerate}
\end{thm}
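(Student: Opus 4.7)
\textbf{Proof plan for Theorem~\ref{t5343232}.} The strategy is the classical endpoint–plus–interpolation template from \cite{JOR96, JRW98, HX}, adapted to the noncommutative square function inequalities by means of the Calder\'on--Zygmund machinery for non–smooth kernels developed in \cite{CCP}. More precisely, I would establish separately the $L_2$ bound, the $L_\infty\to\mathrm{BMO}_d$ bound, and the weak type $(1,1)$ bound, and then obtain the strong $L_p$ estimates for $1<p<\infty$ by interpolation between these endpoints.

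The $L_2$ bound is the conceptual starting point. Since $\mathcal N=L_\infty(\mathbb Z)\bar\otimes\mathcal M$ and each $T_i$ is the convolution operator on $\mathbb Z$ with the kernel $\psi_i=|A_{m_i}|^{-1}\chi_{A_{m_i}}-|A_{m_{i+1}}|^{-1}\chi_{A_{m_{i+1}}}$, a Plancherel/Fubini argument at the operator–valued level reduces the $L_2$ estimate to the pointwise Fourier bound
\[
\sup_{\xi\in\mathbb T}\sum_{i\in\mathcal S}\bigl|\widehat{\psi_i}(\xi)\bigr|^{2}\lesssim 1.
\]
Exploiting that both $m_i$ and $m_{i+1}$ lie in the same dyadic block $[2^{k},2^{k+1})$, a standard mean–value estimate on differences of Dirichlet kernels, combined with the telescoping identity $\sum_{i:[m_i,m_{i+1})\subset[2^k,2^{k+1})}(m_{i+1}-m_i)\le 2^k$, yields the short variation bound uniformly in $\xi$.

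For the $L_\infty\to\mathrm{BMO}_d$ estimate, I would analyze directly the column and row vector–valued kernels $\Psi^c(x,y)=\sum_i\psi_i(x-y)\otimes e_{i1}$ and $\Psi^r(x,y)=\sum_i\psi_i(x-y)\otimes e_{1i}$. For a dyadic interval $Q$ with father $\widehat Q$, I would choose the constant $c_Q=\sum_i T_i(f\chi_{(5\widehat Q)^c})(x_Q)\otimes e_{i1}$ at a fixed $x_Q\in Q$, and control the local part via the $L_2$ bound and the non-local part via the pointwise kernel regularity
\[
\sum_{i\in\mathcal S}\bigl|\psi_i(x-y)-\psi_i(x_Q-y)\bigr|^{2}\lesssim |Q|/|x-y|^{2}
\]
obtained again by summing differences within a dyadic block. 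Passing to the diagonal $\ell^c_2$–valued kernel realizes this as the Hilbert–space analogue of a Calder\'on--Zygmund operator on $\mathcal R=\mathcal N\bar\otimes\mathcal B(\ell_2)$.

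The main obstacle, and the step requiring the most care, is the weak type $(1,1)$ inequality, because the kernels $\psi_i$ are only of bounded variation (no smoothness) and, after assembling the square function, are not of standard Calder\'on--Zygmund type. I would apply the noncommutative Calder\'on--Zygmund decomposition of Proposition~\ref{czdecom} at level $\lambda$ to split $f=g+b$. For the good part I would use the $L_2$ bound together with $\|g\|_\infty\le 2\lambda$ and $\|g\|_1\le\|f\|_1$; this is standard. For the bad part $b=\sum_n b_n$, I would exploit the two cancellation properties of Proposition~\ref{czdecom}(iii): the zero conditional expectation $\mathsf E_n b_n=0$ allows me to replace each $\psi_i(x-\cdot)$ by its increment $\psi_i(x-y)-(\psi_i)_{I_{x,n}}$ on each dyadic interval $I_{x,n}$ of scale $2^n$, while the support condition $\zeta(x)b_n(y)\zeta(x)=0$ for $y\in 5I_{x,n}$ restricts the integration to scales $|x-y|\gtrsim 2^n$. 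A summation of the non-smooth Hörmander–type bound
\[
\sum_{n}\sum_{I\in\mathcal I_n}\sum_{y\in I}\Bigl(\sum_{i}\bigl|\psi_i(x-y)-(\psi_i)_{I}\bigr|^{2}\Bigr)^{1/2}\chi_{(5I)^c}(x)\lesssim 1
\]
then allows me to dominate the relevant $\ell_2^c$--norm on the projection $\zeta$ by an $L_1$ quantity, and Lemma~\ref{cucu}(iii) plus Proposition~\ref{czdecom}(i) closes the row and column estimates. The row piece is more delicate due to the non-tracial nature of $\mathcal B(\ell_2)$ in that slot; I would handle it by duality as in \cite{CCP, HX}, pairing against $L_{\infty}(\mathcal R)$ and exploiting the $\mathrm{BMO}_d$ estimate proven above together with a noncommutative Khintchine step from Proposition~\ref{nonkin}.

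Finally, the strong $L_p$ estimates for $1<p<\infty$ follow by noncommutative Marcinkiewicz interpolation between the weak $(1,1)$ inequality and the $L_\infty\to\mathrm{BMO}_d$ inequality, applied separately to the column and row components and then combined via the intersection/sum structure of $L_p(\mathcal M;\ell_2^{rc})$ recalled in Section~\ref{Pre-Non}.
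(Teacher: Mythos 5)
Your high-level scaffolding (CZ decomposition, $L_2$ endpoint, $L_\infty\to\mathrm{BMO}_d$ endpoint, interpolation) agrees with the paper's, and the $L_2$ and BMO arguments are essentially what the paper does. But there are two places where your plan for the weak type $(1,1)$ and the interpolation diverges from the paper in ways that matter.

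First, the bad-part estimate. You propose a Hörmander-type $L_1$-argument, bounding the contribution of $b=\sum_n b_n$ off the blown-up dyadic blocks by summing a scalar kernel regularity condition and closing with Lemma~\ref{cucu}(iii); you then handle the row piece by duality. The paper does something quite different: it first applies Khintchine (Lemma~\ref{inte1234}) to reduce the whole vector-valued weak $(1,1)$ estimate to a scalar weak $(1,1)$ estimate for the randomized operator $T=\sum_i\varepsilon_iT_i$, eliminating the need to treat the row and column slots separately at all. Then, for $\zeta Tb\zeta$, it works entirely in $L_2$ via Chebyshev and an almost-orthogonality lemma (Lemma~\ref{L1}), proving $\sum_{i\in\mathcal S_k}\|\zeta T_ib_n\zeta\|_2^2\lesssim 2^{-|k-n|}\lambda^2\|p_n\|_2^2$ by exploiting the explicit operator structure $b_n=p_nfq_n+q_{n+1}fp_n-q_{n+1}f_np_n$, the inequality $q_nf_nq_n\le\lambda q_n$, and trace-preservation of $\mathsf E_n$ (Lemmas~\ref{bad-1},~\ref{bad12333}). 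This is not a pointwise kernel estimate: you cannot in general dominate $\zeta(x)T_ib_n(x)\zeta(x)$ by a scalar Hörmander sum times $\|b_n(y)\|$, because the operator-valued square function does not submit to the triangle-inequality reductions used in the commutative $L_1$ bad-part argument. Your displayed Hörmander condition, summed over all $n$, $I$, and $y$, is also not a finite quantity as stated; after reorganization it can be made to express a kernel regularity bound, but the real obstacle is the noncommutativity, and the paper sidesteps it by never leaving $L_2$.

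Second, the interpolation. You say ``Marcinkiewicz interpolation between the weak $(1,1)$ inequality and the $L_\infty\to\mathrm{BMO}_d$ inequality, applied separately to the column and row components.'' For $2\le p<\infty$ this is legitimate (intersection norm; the paper interpolates $T_c$ and $T_r$ separately between $L_2$ and $\mathrm{BMO}$). For $1<p<2$, however, $L_p(\mathcal N;\ell_2^{rc})$ carries the \emph{sum} norm and the decomposition $T_if=y_i+z_i$ entering the quasi-norm is not a fixed linear map of $f$, so you cannot interpolate the column map and the row map separately and recombine. The paper avoids this by interpolating the single randomized map $T=\sum_i\varepsilon_iT_i:L_p(\mathcal N)\to L_p(L_\infty(\Omega)\bar\otimes\mathcal N)$ (real interpolation between weak $(1,1)$ and $L_2$) and only afterwards invoking Khintchine to pass back to the $\ell_2^{rc}$ square function. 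Your sketch should be revised to route the $1<p<2$ case through the Rademacher-randomized operator in the same way.
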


\section{Proof of Theorem \ref{t5343232}: Weak type $(1,1)$ estimate}\label{weak}
In this section, we show the weak type $(1,1)$ estimate stated in Theorem \ref{t5343232}.
\subsection{Some technical lemmas}\quad

\medskip

Recall that $(\varepsilon_{i})$ is a Rademacher sequence on a fixed probability space $(\Omega,P)$. Define
\begin{equation}\label{9898}
{T}f(x)=\sum_{i\in\mathcal{S}}\varepsilon_{i}T_if(x)=\sum_{i\in\mathcal{S}}\varepsilon_{i}(M_{A_{n_{i}}}-M_{A_{n_{i+1}}})f(x).
\end{equation}
Then Proposition~\ref{nonkin}(ii) immediately implies the following result.
\begin{lem}\label{inte1234}
Let $h\in \mathcal{N}_{c,+}$. Then
$$\|(T_{i}h)_{i\in\mathcal{S}}\|_{L_{1,\infty}(\mathcal{N};\ell_{2}^{rc})}\thickapprox
\|{T}h\|_{L_{1,\infty}(L_{\infty}(\Omega)\overline{\otimes}\mathcal N)}.$$
\end{lem}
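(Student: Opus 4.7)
The statement is essentially a one-shot application of the noncommutative Khintchine inequality in the weak $L_1$ endpoint. Before invoking it, note that because $h\in\mathcal{N}_{c,+}$ is bounded with $\tau$-finite support we have $h\in L_1(\mathcal{N})$, and each $T_i$ is a bounded operator on $L_1(\mathcal{N})$ as the difference of two Hardy--Littlewood averaging operators. Thus the finite sequence (or, by Remark~\ref{app}, any finite truncation of the infinite sequence) $(T_i h)_{i\in\mathcal{S}}$ lies in $L_1(\mathcal{N})\subset L_{1,\infty}(\mathcal{N})$ and the norm on the left-hand side of the asserted equivalence is well defined.

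The plan is to apply Proposition~\ref{nonkin} with the ambient semifinite von Neumann algebra $\mathcal{M}$ replaced by $\mathcal{N}=L_\infty(\mathbb{Z})\overline{\otimes}\mathcal{M}$ and with the generic sequence $(x_n)$ replaced by $(T_i h)_{i\in\mathcal{S}}$, in its weak $L_1$ form as recorded at the end of that proposition. This yields positive constants $c_1, C_1$ such that
\[
c_1\,\|(T_i h)_{i\in\mathcal{S}}\|_{L_{1,\infty}(\mathcal{N};\ell^{rc}_2)}
\le
\Big\|\sum_{i\in\mathcal{S}}\varepsilon_i T_i h\Big\|_{L_{1,\infty}(L_\infty(\Omega)\overline{\otimes}\mathcal{N})}
\le
C_1\,\|(T_i h)_{i\in\mathcal{S}}\|_{L_{1,\infty}(\mathcal{N};\ell^{rc}_2)}.
\]
By the very definition \eqref{9898} of the operator $T$, the middle quantity is exactly $\|Th\|_{L_{1,\infty}(L_\infty(\Omega)\overline{\otimes}\mathcal{N})}$, so rearranging the two inequalities delivers the equivalence $\|(T_i h)_{i\in\mathcal{S}}\|_{L_{1,\infty}(\mathcal{N};\ell^{rc}_2)}\thickapprox \|Th\|_{L_{1,\infty}(L_\infty(\Omega)\overline{\otimes}\mathcal{N})}$ claimed in the lemma.

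There is no genuine obstacle here: the substantive content is packaged inside Proposition~\ref{nonkin}, whose weak-type extension (due to Lust-Piquard--Pisier and later refinements cited in the preliminaries) is what makes the argument work at the endpoint $p=1$. The only minor point to flag is that \eqref{9898} formally involves an infinite sum over $i\in\mathcal{S}$; as in Remark~\ref{app}, this is understood via uniform bounds on finite partial sums, and since the Khintchine constants $c_1, C_1$ in Proposition~\ref{nonkin} are independent of the length of the sequence, the equivalence passes to the limit without change.
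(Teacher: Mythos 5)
Your proof is correct and follows essentially the same route as the paper, which simply observes that the equivalence is an immediate consequence of the weak $L_1$ noncommutative Khintchine inequality (the final clause of Proposition~\ref{nonkin}) applied over $\mathcal{N}$ to the finite sequence $(T_i h)_{i\in\mathcal S}$, together with the definition of $T$ in \eqref{9898}. Your preliminary remarks about well-definedness and the finite-truncation convention are sensible hygiene but do not alter the argument.
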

\begin{lem}\label{L1}\rm
Let $(S_{k,i})_{k,i\in\Z}$ be a sequence of bounded linear operators on $L_2(\mathcal{N})$. Let $h\in L_2(\mathcal{N})$.
If $(u_{n})_{n\in\Z}$ and $(v_{n})_{n\in\Z}$ are two sequences of operators in $L_{2}(\mathcal{N})$ such that $h=\sum_{n\in\Z}u_{n}$ and $\sum_{n\in\Z}\|v_{n}\|_{2}^{2}<\infty$, then
$$\sum_{k\in\Z}\|(S_{k,i}h)_{i}\|_{L_2(\mathcal{N};\ell_{2}^{rc})}^{2}\leq w^{2}\sum_{n\in\Z}\|v_{n}\|_{2}^{2}$$
provided that there exists a sequence $(\sigma(j))_{j\in\Z}$ of positive numbers with $w=\sum_{j\in\Z}\sigma(j)<\infty$ such that
\begin{align}\label{HLX917}
\|(S_{k,i}u_{n})_{i}\|_{L_2(\mathcal{N};\ell_{2}^{rc})}\leq \sigma(n-k)\|v_{n}\|_{2}
\end{align}
for every $n,k$.
\end{lem}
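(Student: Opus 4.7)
The plan is to treat Lemma~\ref{L1} as a Cotlar--Stein style almost orthogonality estimate, so that the whole argument reduces to the triangle inequality in $L_{2}(\mathcal{N};\ell_{2}^{rc})$ together with a Cauchy--Schwarz step with carefully chosen weights and a change of the order of summation.

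First I would fix $k$ and use the decomposition $h=\sum_{n}u_{n}$ combined with the triangle inequality for the norm $\|\cdot\|_{L_{2}(\mathcal{N};\ell_{2}^{rc})}$, followed immediately by the standing hypothesis \eqref{HLX917}, to obtain
\begin{equation*}
\bigl\|(S_{k,i}h)_{i}\bigr\|_{L_{2}(\mathcal{N};\ell_{2}^{rc})}
\;\leq\;\sum_{n\in\Z}\bigl\|(S_{k,i}u_{n})_{i}\bigr\|_{L_{2}(\mathcal{N};\ell_{2}^{rc})}
\;\leq\;\sum_{n\in\Z}\sigma(n-k)\,\|v_{n}\|_{2}.
\end{equation*}
The only subtlety here is that the triangle inequality in $L_{2}(\mathcal{N};\ell_{2}^{rc})$ is automatic (at $p=2$ both the row and column norms coincide with the Hilbert--Schmidt type norm on $L_{2}(\mathcal{N};\ell_{2})$), so nothing deeper than a linearity argument is needed.

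Next I would square and invoke Cauchy--Schwarz with the splitting $\sigma(n-k)=\sigma(n-k)^{1/2}\cdot\sigma(n-k)^{1/2}$, which gives
\begin{equation*}
\Bigl(\sum_{n\in\Z}\sigma(n-k)\|v_{n}\|_{2}\Bigr)^{2}
\;\leq\;\Bigl(\sum_{n\in\Z}\sigma(n-k)\Bigr)\Bigl(\sum_{n\in\Z}\sigma(n-k)\|v_{n}\|_{2}^{2}\Bigr)
\;=\;w\sum_{n\in\Z}\sigma(n-k)\|v_{n}\|_{2}^{2}.
\end{equation*}
Finally, summing over $k\in\Z$ and interchanging the order of summation (which is legitimate since all terms are non-negative) yields
\begin{equation*}
\sum_{k\in\Z}\bigl\|(S_{k,i}h)_{i}\bigr\|_{L_{2}(\mathcal{N};\ell_{2}^{rc})}^{2}
\;\leq\;w\sum_{n\in\Z}\|v_{n}\|_{2}^{2}\sum_{k\in\Z}\sigma(n-k)
\;=\;w^{2}\sum_{n\in\Z}\|v_{n}\|_{2}^{2},
\end{equation*}
using translation invariance of $\sum_{k}\sigma(n-k)=w$. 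There is no genuine obstacle: the statement is essentially bookkeeping around the Schur test, and the real content is that the hypothesis \eqref{HLX917} has already packaged the noncommutative $\ell_{2}^{rc}$ norm into scalars, so the argument is identical to its classical counterpart.
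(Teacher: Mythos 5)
Your argument is correct and is essentially the same as the paper's: both begin with the triangle inequality in $L_{2}(\mathcal{N};\ell_{2}^{rc})$ and the hypothesis \eqref{HLX917}, and then bound $\sum_{k}\bigl(\sum_{n}\sigma(n-k)\|v_{n}\|_{2}\bigr)^{2}\le w^{2}\sum_{n}\|v_{n}\|_{2}^{2}$; the paper simply cites this last step as Young's inequality for $\ell^{1}*\ell^{2}\subset\ell^{2}$, whereas you prove it inline via the standard Cauchy--Schwarz splitting and Tonelli interchange.
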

\begin{proof}
By the triangle inequality in $L_2(\mathcal{N};\ell_{2}^{rc})$, (\ref{HLX917}) and the Young inequality in $\ell_{2}$, we deduce that
\begin{align*}
\sum_{k\in\Z}\|(S_{k,i}h)_{i}\|_{L_2(\mathcal{N};\ell_{2}^{rc})}^{2}
\leq&\ \hskip1pt \sum_{k\in\Z}\bigg(\sum_{n\in\Z}\|(S_{k,i}u_{n})_{i}\|_{L_2(\mathcal{N};\ell_{2}^{rc})}\bigg)^{2}\\
\leq&\ \sum_{k\in\Z}\bigg(\sum_{n\in\Z}\sigma(n-k)\|v_{n}\|_{2}\bigg)^{2}\\
\leq&\ \bigg(\sum_{n\in\Z}\sigma(n)\bigg)^{2}\bigg(\sum_{k\in\Z}\|v_{k}\|_{2}^{2}\bigg),
\end{align*}
which finishes the proof.
\end{proof}

Let $A$ be a subset of $\Z$, define
\begin{align}\label{bad347237289}
	\mathcal{I}(A,n)=\bigcup_{{\begin{subarray}{c}
				I\in\I_{n} \\ \partial A\cap I\neq \emptyset
	\end{subarray}}} A\cap I
\end{align}
and
\begin{align}\label{bad47237289}
	\mathcal{I}_1(A,n)=\bigcup_{{\begin{subarray}{c}
				I\in\I_{n} \\ \partial A\cap I\neq \emptyset
	\end{subarray}}} I,
\end{align}
where $\partial A$ means the boundary of $A$.

\begin{lem}\label{KeyLem}
Let $h\in L_2({\mathcal{N}})$. Then for any $x\in\Z$, $n\in\mathbb N$ and subset $B\subseteq\Z$,
\begin{align*}
  &\Big\|\sum_{y\in \mathcal{I}(x+B,n)}h(y)\Big\|^2_{L_2(\M)}\le |\mathcal{I}_1(x+B,n)|\sum_{y\in \mathcal{I}_1(x+B,n)}\|\mathsf{E}_n(h\1_{x+B})(y)\|^2_{L_2(\M)};\\
  &\Big\|\sum_{y\in \mathcal{I}(x+B,n)}h(y)\Big\|^2_{L_2(\M)}\le |\mathcal{I}(x+B,n)|\sum_{y\in \mathcal{I}(x+B,n)}\|h(y)\|^2_{L_2(\M)}.
\end{align*}
\end{lem}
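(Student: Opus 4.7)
The plan is to derive both bounds from the Cauchy--Schwarz inequality in the Hilbert space $L_2(\mathcal M)$; the second one is a direct application, while the first requires a preliminary dyadic regrouping.

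For the second inequality, I would simply view $\mathcal{I}(x+B,n)$ as a finite index set and apply Cauchy--Schwarz in $L_2(\mathcal M)$:
\[
\Bigl\|\sum_{y\in\mathcal I(x+B,n)}h(y)\Bigr\|_{L_2(\mathcal M)}^{2}\le |\mathcal I(x+B,n)|\sum_{y\in\mathcal I(x+B,n)}\|h(y)\|_{L_2(\mathcal M)}^{2}.
\]
This is just the estimate $\|\sum_j a_j\|^2\le N\sum_j\|a_j\|^2$ for $N$ vectors in a Hilbert space, applied to the vectors $a_y=h(y)\in L_2(\mathcal M)$.

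For the first inequality, the key observation is that $\mathcal I(x+B,n)$ is, by definition, the restriction of $x+B$ to the union of dyadic intervals touching $\partial(x+B)$, so
\[
\sum_{y\in\mathcal I(x+B,n)}h(y)=\sum_{\substack{I\in\mathcal I_n\\ \partial(x+B)\cap I\neq\emptyset}}\,\sum_{y\in I}(h\mathbf 1_{x+B})(y).
\]
Since $\mathsf E_n(h\mathbf 1_{x+B})$ is constant on each $I\in\mathcal I_n$ and equals $\frac{1}{|I|}\sum_{y\in I}(h\mathbf 1_{x+B})(y)$, picking any representative $y_I\in I$ gives
\[
\sum_{y\in I}(h\mathbf 1_{x+B})(y)=|I|\,\mathsf E_n(h\mathbf 1_{x+B})(y_I).
\]
Let $N$ denote the number of dyadic intervals $I\in\mathcal I_n$ appearing in the sum, so that $|\mathcal I_1(x+B,n)|=N|I|=N\cdot 2^{n}$. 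Applying Cauchy--Schwarz in $L_2(\mathcal M)$ to the $N$ vectors $|I|\,\mathsf E_n(h\mathbf 1_{x+B})(y_I)$ yields
\[
\Bigl\|\sum_{I}|I|\,\mathsf E_n(h\mathbf 1_{x+B})(y_I)\Bigr\|_{L_2(\mathcal M)}^{2}\le N\sum_{I}|I|^{2}\,\|\mathsf E_n(h\mathbf 1_{x+B})(y_I)\|_{L_2(\mathcal M)}^{2}.
\]
Using again the fact that $\mathsf E_n(h\mathbf 1_{x+B})$ is constant on each $I$, I would rewrite the right-hand side as
\[
N|I|\sum_{y\in\mathcal I_1(x+B,n)}\|\mathsf E_n(h\mathbf 1_{x+B})(y)\|_{L_2(\mathcal M)}^{2}=|\mathcal I_1(x+B,n)|\sum_{y\in\mathcal I_1(x+B,n)}\|\mathsf E_n(h\mathbf 1_{x+B})(y)\|_{L_2(\mathcal M)}^{2},
\]
which is exactly the required estimate.

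There is no real obstacle here: both inequalities are Hilbert-space Cauchy--Schwarz applied to suitably grouped sums, and the mild subtlety is only the bookkeeping in the first inequality, namely recognising that summation of $h\mathbf 1_{x+B}$ over a dyadic interval equals $|I|$ times the conditional expectation $\mathsf E_n(h\mathbf 1_{x+B})$ evaluated at any point of $I$, so that the counting factor $N$ combines with one factor of $|I|$ to reproduce $|\mathcal I_1(x+B,n)|$.
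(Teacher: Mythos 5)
Your proof is correct and follows essentially the same route as the paper: both rest on the key identity that $\sum_{y\in \mathcal{I}(x+B,n)}h(y)$ equals the sum of $\mathsf{E}_n(h\1_{x+B})$ over $\mathcal{I}_1(x+B,n)$, followed by a triangle-inequality-plus-Cauchy--Schwarz step. The only cosmetic difference is that you apply Cauchy--Schwarz at the level of the $N$ dyadic intervals (with the weight $|I|$ pulled out) and then spread the factor $|I|^2$ back over points, whereas the paper applies it directly to the $|\mathcal{I}_1(x+B,n)|$ points; the two bookkeepings are equivalent.
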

\begin{proof}
We first prove the first inequality. It is easy to check that
\begin{equation*}
  \sum_{y\in \mathcal{I}(x+B,n)}h(y)=\sum_{\substack{I\in\I_n\\ I\cap\partial(x+B)\neq\emptyset}}\sum_{y\in I}\mathsf{E}_n(h\1_{x+B})(y)=
  \sum_{y\in \mathcal{I}_1(x+B,n)}\mathsf{E}_n(h\1_{x+B})(y).
\end{equation*}
By the Minkowski and Cauchy-Schwarz inequalities, we obtain
\begin{align*}
  \Big\|\sum_{y\in \mathcal{I}(x+B,n)}h(y)\Big\|^2_{L_2(\M)}&\le \Big(\sum_{y\in \mathcal{I}_1(x+B,n)}\|\mathsf{E}_n(h\1_{x+B})(y)\|_{L_2(\M)}\Big)^2\\
  &\le|\mathcal{I}_1(x+B,n)|\sum_{y\in \mathcal{I}_1(x+B,n)}\|\mathsf{E}_n(h\1_{x+B})(y)\|^2_{L_2(\M)}.
\end{align*}
The same argument gives
\begin{align*}
  \Big\|\sum_{y\in \mathcal{I}(x+B,n)}h(y)\Big\|^2_{L_2(\M)}&\le \Big(\sum_{y\in \mathcal{I}(x+B,n)}\|h(y)\|_{L_2(\M)}\Big)^2\\
  &\le|\mathcal{I}(x+B,n)|\sum_{y\in \mathcal{I}(x+B,n)}\|h(y)\|^2_{L_2(\M)}.
\end{align*}
This completes the proof.
\end{proof}

\bigskip

Now we are ready to prove the weak type $(1,1)$ estimate in Theorem \ref{t5343232}. Note that we just consider the case that each $A_{n_i}$ is written as $[0,n_i]$, since another case $A_{n_{i}}$ of the form $[-n_i,n_i]$ can be handled in the same way.

By decomposing $f = f_{1}-f_{2} +i(f_{3}-f_{4})$ with $f_{j}\geq0$ such that $\|f_{j}\|_{1}\leq\|f\|_{1}$ for $j=1,2,3,4$, we may assume that $f$ is positive. Moreover, since $\mathcal N_{c,+}$ is dense in $L_1(\mathcal N)_{+}$, by the standard approximation argument, it suffices to consider $f\in \mathcal N_{c,+}$. Now fix one $f\in \mathcal N_{c,+}$ and a $\lambda\in(0,+\infty)$.
Using Theorem \ref{czdecom}, we can decompose $f$ as $f=g+b$. Then the distribution inequality gives (\ref{distri}),
$$\widetilde{\varphi}\big(\chi_{(\lambda,\infty)}(|Tf|)\big)\leq \widetilde{\varphi}\big(\chi_{(\lambda/2,\infty)}(|Tg|)\big)+\widetilde{\varphi}\big(\chi_{(\lambda/2,\infty)}(|Tb|)\big),$$
where $\widetilde{\varphi}=\int_{\Omega}\otimes\varphi$.
Therefore, by Lemma \ref{inte1234}, it suffices to show
\begin{align}\label{HLX7}
\widetilde{\varphi}(\chi_{(\lambda/2,\infty)}(|Tb|))\lesssim\frac{\| f\|_{1}}{\lambda},
\end{align}
\begin{align}\label{HLX15}
\widetilde{\varphi}(\chi_{(\lambda/2,\infty)}(|Tg|))\lesssim\frac{\| f\|_{1}}{\lambda}.
\end{align}

\subsection{Weak type estimate for the bad function: (\ref{HLX7})}\quad

\medskip

Using the projection $\zeta$ introduced in Proposition~\ref{czdecom}, we decompose $Tb$  as
$$Tb = (\1_\mathcal{N} - \zeta) T
b (\1_\mathcal{N} - \zeta) + \zeta \hskip1pt T b (\1_\mathcal{N} - \zeta)
+ (\1_\mathcal{N} - \zeta)T b \zeta + \zeta \hskip1pt T b
\zeta.$$
In particular, by Proposition~\ref{czdecom}(i), we find
\begin{align*}
\hskip1pt \widetilde{\varphi} \big(\chi_{(\lambda/2,\infty)}(|Tb|)
\big)
\lesssim&\ \hskip1pt \varphi (\1_\mathcal{N} - \zeta) +
\hskip1pt \widetilde{\varphi} \big(\chi_{(\lambda/8,\infty)}(|\zeta Tb\zeta|)
\big)\\
\lesssim&\ \frac{ \|f\|_{1}}{\lambda}+\widetilde{\varphi} \big(\chi_{(\lambda/8,\infty)}(|\zeta Tb\zeta|)
\big).
\end{align*}
Hence, we are reduced to showing
\begin{align*}
\widetilde{\varphi} \big(\chi_{(\lambda/8,\infty)}(|\zeta Tb\zeta|)\big)\lesssim\frac{ \|f\|_{1}}{\lambda}.
\end{align*}
Note that the Chebychev inequality gives
$$\lambda^{2}\widetilde{\varphi} \big(\chi_{(\lambda/8,\infty)}(|\zeta Tb\zeta|)\big)\lesssim\|\zeta Tb\zeta\|^{2}_{L_{2}(L_{\infty}(\Omega)\overline{\otimes}\mathcal N)}.$$
Hence, it is enough to prove
\begin{eqnarray}\label{9121}
	\|\zeta Tb\zeta\|^{2}_{L_{2}(L_{\infty}(\Omega)\overline{\otimes}\mathcal N)}\lesssim\lambda^{2}\sum_{n}\|p_{n}\|^{2}_{2},
\end{eqnarray}
due to Cuculescu's construction and (\ref{346679}), $$\sum_{n}\|p_{n}\|^{2}_{2}=\sum_{n}\|p_{n}\|_{1}\lesssim\frac{\|f\|_{1}}{\lambda}.$$
To estimate (\ref{9121}), we first use the orthogonality of $\varepsilon_{i}$ to get
\begin{eqnarray*}\label{333}
	\|\zeta Tb\zeta\|^{2}_{L_{2}(L_{\infty}(\Omega)\overline{\otimes}\mathcal N)}=
	\sum_{i\in\mathcal{S}}
	\|\zeta \, T_i \hskip-1pt b \, \zeta
	\|^{2}_{2}=\sum_{i\in\mathcal{S}}\|
	\zeta \, (M_{A_{n_{i}}}-M_{A_{n_{i+1}}}) \hskip-1pt b \, \zeta\|^{2}_{2}.
\end{eqnarray*}
For each $k\in\Z$, let $\mathcal{S}_k$ be the set of $i$ such that $[n_i,n_{i+1})\subseteq [2^k,2^{k+1})$. Then clearly $\mathcal{S}=\cup_{k\in\Z}\mathcal{S}_k$ since for $k<0$, $\mathcal{S}_k$ is empty. With this convention, we deduce that
$$\sum_{i\in \mathcal{S}}\|
\zeta \, (M_{A_{n_{i}}}-M_{A_{n_{i+1}}}) \hskip-1pt b \, \zeta\|^{2}_{2}=\sum_{k}\sum_{i\in \mathcal{S}_{k}}\|
\zeta \, (M_{A_{n_{i}}}-M_{A_{n_{i+1}}}) \hskip-1pt b \, \zeta\|^{2}_{2}.$$
Hence, (\ref{9121}) is reduced to showing
\begin{eqnarray}\label{bad}
	\sum_{k}\sum_{i\in \mathcal{S}_{k}}\|
	\zeta \, (M_{A_{n_{i}}}-M_{A_{n_{i+1}}}) \hskip-1pt b \, \zeta\|^{2}_{2}\lesssim\lambda^{2}\sum_{n}\|p_{n}\|^{2}_{2}.
\end{eqnarray}

To show \eqref{bad}, by noting the definition of $m_{\lambda}(f)$, we can express $b$ as $b=\sum_{n\leq m_{\lambda}(f)}b_{n}$, where $b_{n}=p_n (f-f_{n}) q_n+q_{n+1}(f-f_{n})p_n$ as in (\ref{czd6}). On the other hand, let
$(S_{k,i}h)_{i\in \mathcal{S}_{k}}=(\zeta(M_{A_{n_{i}}}-M_{A_{n_{i+1}}})b\zeta)_{i\in \mathcal{S}_{k}}$, $u_{n}=b_{n}$ and $v_{n}=p_{n}$ in Lemma \ref{L1}. Then  it suffices to show
\begin{eqnarray}\label{bad1}
	\sum_{i\in \mathcal{S}_{k}}\|\zeta(M_{A_{n_{i}}}-M_{A_{n_{i+1}}})b_{n}\zeta\|^{2}_{2}\lesssim2^{-|k-n|}\lambda^{2}\|p_{n}\|^{2}_{2}.
\end{eqnarray}


In the following, we divide the proof of (\ref{bad1}) into several steps.
\begin{lem}\label{badfunction1}
Fix $i\in \mathcal{S}_{k}$. Then for $n\ge k$,
$$
	\zeta(x) (M_{A_{n_{i}}}-M_{A_{n_{i+1}}}) b_{n}(x)\zeta(x)=0,\;\forall x\in\Z.
$$
\end{lem}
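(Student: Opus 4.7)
The plan is to exploit the support-cancellation of the bad function $b_n$ provided by part (iii) of Proposition \ref{czdecom}, which asserts that $\zeta(x) b_n(y) \zeta(x) = 0$ whenever $y\in 5I_{x,n}$, where $I_{x,n}\in\I_n$ denotes the dyadic interval of length $2^n$ containing $x$. Unfolding the averages $M_{A_{n_j}}$ yields
\begin{align*}
\zeta(x) M_{A_{n_j}} b_n(x)\zeta(x) = \frac{1}{|A_{n_j}|}\sum_{z\in A_{n_j}}\zeta(x) b_n(x+z)\zeta(x),
\end{align*}
so the lemma will follow at once from the geometric inclusion
$$x + A_{n_j} \subseteq 5I_{x,n}\qquad \text{for } j\in\{i,i+1\},\ i\in\mathcal{S}_k,\ n\geq k.$$

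To verify this inclusion, first note that $i\in\mathcal{S}_k$ forces $n_i, n_{i+1}\in[2^k, 2^{k+1}]$, and therefore $n_j\leq 2^{k+1}\leq 2^{n+1}$ whenever $n\geq k$. Writing $I_{x,n} = [s\cdot 2^n, (s+1)\cdot 2^n)$, one has $5I_{x,n} = [(s-2)\cdot 2^n, (s+3)\cdot 2^n)$. Regardless of whether $A_{n_j}$ has the form $[0,n_j]$ or $[-n_j, n_j]$, the translate $x + A_{n_j}$ is contained in $[x - 2^{n+1}, x + 2^{n+1}]$. Combined with $x\in[s\cdot 2^n, (s+1)\cdot 2^n)$, this gives
$$x - 2^{n+1} \geq (s-2)\cdot 2^n \qquad \text{and} \qquad x + 2^{n+1} < (s+3)\cdot 2^n,$$
so $x + A_{n_j} \subseteq 5I_{x,n}$, as required. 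This geometric point is precisely what motivates the dilation factor $5$ in the definition of $\zeta$ in (\ref{czd9}).

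Applying the cancellation property term by term in the displayed sum, each summand $\zeta(x) b_n(x+z)\zeta(x)$ vanishes, hence $\zeta(x) M_{A_{n_i}} b_n(x)\zeta(x) = 0 = \zeta(x) M_{A_{n_{i+1}}} b_n(x) \zeta(x)$, and subtracting gives the claim. The proof is essentially a one-step verification: no substantive analytic difficulty arises, and the only point that must be recorded carefully is the scale comparison $n_j \le 2^{k+1}$ afforded by $i\in\mathcal{S}_k$, which is exactly what guarantees that the whole orbit $x+A_{n_j}$ stays inside the Calder\'on--Zygmund exceptional set $5I_{x,n}$.
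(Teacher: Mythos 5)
Your proof is correct and follows essentially the same route as the paper's: both deduce the lemma from the cancellation property $\zeta(x)\,b_n(y)\,\zeta(x)=0$ for $y\in 5I_{x,n}$ in Proposition \ref{czdecom}(iii) together with the geometric inclusion $x+A_{n_j}\subseteq 5I_{x,n}$, which holds because $i\in\mathcal S_k$ forces $n_j\le 2^{k+1}\le 2^{n+1}$ when $n\ge k$. Your explicit verification of the inclusion for both shapes of $A_{n_j}$ is a slightly more detailed write-up of what the paper records in a single line, but the underlying argument is identical.
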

\begin{proof}
This follows from the observation  $\zeta(x)M_{A_{n_{i}}} b_{n}(x)\zeta(x)=\zeta(x)M_{A_{n_{i+1}}} b_{n}(x)\zeta(x)=0,\;\forall x\in\Z$. To see this, fix one $x\in\mathbb Z$. The cancellation property announced in Proposition~\ref{czdecom}(iii) implies
\begin{align*}
	\zeta(x)M_{A_{n_{i+1}}}b_{n}(x)\zeta(x)&=\zeta(x)\frac1{|A_{n_{i+1}}|}\sum_{y\in x+A_{n_{i+1}}}b_{n}(y)\1_{y\notin 5I_{x,n}}\zeta(x)=0
\end{align*}
since $x+A_{n_{i+1}}\subset 5I_{x,n}$ and $k\le n$. The same reasoning implies $\zeta(x)M_{A_{n_{i}}}b_{n}(x)\zeta(x)=0$. This finishes the proof.
\end{proof}
With Lemma \ref{badfunction1}, since $\zeta$ is a projection, it suffices to show for $n< k$
\begin{eqnarray}\label{bad12343}
\sum_{i\in \mathcal{S}_{k}}\|(M_{A_{n_{i}}}-M_{A_{n_{i+1}}})b_{n}\|^{2}_{2}\lesssim2^{n-k}\lambda^{2}\|p_{n}\|^{2}_{2}.
\end{eqnarray}
To show (\ref{bad12343}), by applying the Minkowski inequality, we have
\begin{align*}
\begin{split}
\sum_{i\in \mathcal{S}_{k}}\|(M_{A_{n_{i}}}-M_{A_{n_{i+1}}})b_{n}\|^{2}_{2}&\lesssim\sum_{i\in \mathcal{S}_{k}}\sum_{x\in\Z}
\Big\|\frac{1}{|A_{n_{i+1}}|}\sum_{y\in x+A_{n_{i+1}}\setminus A_{n_{i}}}b_{n}(y)\Big\|^{2}_{L_{2}(\mathcal{M})}\\
&\quad+\sum_{i\in \mathcal{S}_{k}}\Big(\frac{1}{|A_{n_{i}}|}-\frac{1}{|A_{n_{i+1}}|}\Big)^{2}
\sum_{x\in\Z}\Big\|\sum_{y\in x+A_{n_{i}}}b_{n}(y)\Big\|^{2}_{L_{2}(\mathcal{M})}\\
&\triangleq I^{1}_{k,n}+I^{2}_{k,n}.
\end{split}
\end{align*}

\subsubsection{{\bf{Estimate of}} $I^{1}_{k,n}$}
\begin{lem}\label{bad-1}
For $n< k$, we have
$$I^{1}_{k,n}\lesssim2^{n-k}\lambda^{2}\|p_{n}\|^{2}_{2}.$$
\end{lem}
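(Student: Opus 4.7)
The plan is to exploit the cancellation property $\mathsf{E}_n b_n = 0$ from Proposition~\ref{czdecom}(iii), which in particular implies that for every $I\in\I_n$, $\sum_{y\in I}b_n(y)=0$. Since $n<k$ and $|A_{n_{i+1}}|\approx 2^k$, each translated set $x+B_i$ with $B_i:=A_{n_{i+1}}\setminus A_{n_i}=(n_i,n_{i+1}]$ decomposes as a disjoint union of dyadic intervals from $\I_n$ fully contained in $x+B_i$ together with the boundary-crossing part $\mathcal{I}(x+B_i,n)$ (as defined in \eqref{bad347237289}). The sum of $b_n$ over each fully contained dyadic interval vanishes, so
$$\sum_{y\in x+B_i}b_n(y)=\sum_{y\in\mathcal{I}(x+B_i,n)}b_n(y).$$
Since $x+B_i$ has only two boundary points, $|\mathcal{I}(x+B_i,n)|\le 2\cdot 2^n$.

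Next, I would apply the second inequality of Lemma~\ref{KeyLem} to obtain
$$\Big\|\sum_{y\in\mathcal{I}(x+B_i,n)}b_n(y)\Big\|^2_{L_2(\M)}\lesssim 2^n\sum_{y\in\mathcal{I}(x+B_i,n)}\|b_n(y)\|^2_{L_2(\M)}.$$
Combined with $|A_{n_{i+1}}|\gtrsim 2^k$, this gives
$$I^1_{k,n}\lesssim\frac{2^n}{2^{2k}}\sum_{i\in\mathcal{S}_k}\sum_{x\in\Z}\sum_{y\in\mathcal{I}(x+B_i,n)}\|b_n(y)\|^2_{L_2(\M)}.$$
The sets $(B_i)_{i\in\mathcal{S}_k}$ are pairwise disjoint and contained in an interval of length $\lesssim 2^k$, so for fixed $x$, the inner double sum is bounded by $\sum_{y\in x+A_{2^{k+1}}}\|b_n(y)\|^2_{L_2(\M)}$. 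Swapping the order of summation in $x$ and $y$ then produces the factor $|A_{2^{k+1}}|\lesssim 2^k$, yielding
$$I^1_{k,n}\lesssim \frac{2^n}{2^{2k}}\cdot 2^k\,\|b_n\|^2_{L_2(\mathcal N)}=2^{n-k}\,\|b_n\|^2_{L_2(\mathcal N)}.$$

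To finish, it remains to show the standard noncommutative Calder\'on--Zygmund $L_2$-estimate $\|b_n\|^2_2\lesssim\lambda^2\|p_n\|^2_2$. Recalling $\|p_n\|^2_2=\varphi(p_n)=\|p_n\|_1$, I would interpolate between $\|b_n\|_1\lesssim\lambda\|p_n\|_1$ (which follows directly from $\|f_n\|_{p_n\M p_n}\lesssim\lambda$ via the Cuculescu inequalities $p_n\le q_{n+1}$ together with (\ref{czd1123})) and $\|b_n\|_\infty\lesssim\lambda$ (using again $p_n\le q_{n+1}$, so $p_nf_n=p_nq_{n+1}f_nq_{n+1}$, combined with the commutation relations (\ref{czd5})). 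The main technical point here is the careful use of Cuculescu's construction to bound the mixed products $p_n(f-f_n)q_n$ and $q_{n+1}(f-f_n)p_n$; this is the only nontrivial obstacle, as the reduction-to-boundary argument is clean once the cancellation $\mathsf{E}_nb_n=0$ is properly invoked.
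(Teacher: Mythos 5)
Your opening steps are correct and agree with the paper's: the cancellation $\mathsf{E}_n b_n=0$ reduces the inner sum to the boundary set $\mathcal{I}(x+B_i,n)$, and invoking one of the inequalities in Lemma~\ref{KeyLem} together with the size estimates $|A_{n_{i+1}}|\approx 2^k$, $|\mathcal{I}(x+B_i,n)|\lesssim 2^n$ and the almost-disjointness of the $B_i$'s legitimately produces $I^1_{k,n}\lesssim 2^{n-k}\|b_n\|_2^2$.

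The gap is in the final step: the claimed estimate $\|b_n\|_\infty\lesssim\lambda$ is false, and so is the interpolated $L_2$ bound $\|b_n\|_2^2\lesssim\lambda^2\|p_n\|_2^2$. Recall that
$$b_n = p_n(f-f_n)q_n + q_{n+1}(f-f_n)p_n = p_nfq_n + q_{n+1}fp_n - q_{n+1}f_np_n,$$
using $p_nf_nq_n=0$. Your justification for the $L_\infty$ bound only addresses $p_nf_n = p_nq_{n+1}f_nq_{n+1}$, i.e.\ the \emph{averaged} function $f_n$; but $b_n$ also contains $p_nfq_n$ and $q_{n+1}fp_n$, which involve the raw function $f$. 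Cuculescu's construction bounds $p_nf_np_n$, $q_nf_nq_n$, $q_{n+1}f_nq_{n+1}$ by $\lambda$; it says nothing about $p_nfq_n$. Indeed $f\in\mathcal N_{c,+}$ is only $L_1$-normalized, and the whole point of the Calder\'on--Zygmund decomposition is that the bad part $b$ is \emph{not} $L_\infty$-bounded (only the good part $g$ is). In the scalar commutative case, $\|b_n\|_2^2\approx\int_{p_n}f^2$, which is uncontrollable by $\lambda^2|p_n|$ when $f\in L_1\setminus L_2$.

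This is precisely why the paper uses the \emph{first} inequality of Lemma~\ref{KeyLem} (the one with $\mathsf{E}_n$) rather than the second: applying $\mathsf{E}_n$ before estimating allows one to replace $f\1_{x+B_i}$ by $\mathsf{E}_n(f\1_{x+B_i})\le f_n$, and it is only this conditional expectation $f_n$ — not $f$ itself — that Cuculescu's projections render bounded ($p_nf_np_n\le2\lambda$, $q_nf_nq_n\le\lambda$, $q_{n+1}f_nq_{n+1}\lesssim\lambda$). Concretely, the paper splits $b_n$ into the three pieces $p_nfq_n$, $q_{n+1}fp_n$, $q_{n+1}f_np_n$, and for each one uses the $\mathsf{E}_n$-form of Lemma~\ref{KeyLem}, H\"older, and the Cuculescu bounds to obtain the $\lambda^2$-factor without ever bounding $b_n$ in $L_2$ or $L_\infty$. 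To repair your argument you would need to switch to that first inequality and carry the conditional expectation through the computation.
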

\begin{proof}

First, the cancellation property-Proposition~\ref{czdecom}(iii) of $b_{n}$ gives
$$I^{1}_{k,n}=\sum_{i\in \mathcal{S}_{k}}\sum_{x\in\Z}
\Big\|\frac{1}{|A_{n_{i+1}}|}\sum_{y\in \mathcal{I}(x+A_{n_{i+1}}\setminus A_{n_{i}},n)}b_{n}(y)\Big\|^{2}_{L_{2}(\mathcal{M})}.$$
On the other hand, observe that
\begin{align}\label{bad34237289}
	b_{n}=p_n f q_n+q_{n+1}fp_n-q_{n+1}f_np_n.
\end{align}
Indeed, by (\ref{czd1}), $p_{n}=q_{n+1}-q_{n}\leq q_{n+1}$; moreover, by Cuculescu's construction-Lemma \ref{cucu}(i), we obtain
$$p_n f_{n} q_n=p_nq_{n+1} f_{n} q_{n+1}q_n=p_nq_nq_{n+1} f_{n} q_{n+1}=0.$$
This gives the desired expression (\ref{bad34237289}).

With the observation (\ref{bad34237289}) and the Minkowski inequality, we see that to obtain the desired inequality for the term $I^{1}_{k,n}$, it suffices to estimate the following three terms
\begin{eqnarray*}
	I^{1}_{k,n,1}& \triangleq &\sum_{i\in \mathcal{S}_{k}}\sum_{x\in\Z}
	\Big\|\frac{1}{|A_{n_{i+1}}|}\sum_{y\in \mathcal{I}(x+A_{n_{i+1}}\setminus A_{n_{i}},n)}(p_n f q_n)(y)\Big\|^{2}_{L_{2}(\mathcal{M})}\\
	I^{1}_{k,n,2}& \triangleq &\sum_{i\in \mathcal{S}_{k}}\sum_{x\in\Z}
	\Big\|\frac{1}{|A_{n_{i+1}}|}\sum_{y\in \mathcal{I}(x+A_{n_{i+1}}\setminus A_{n_{i}},n)}(q_{n+1} f p_n)(y)\Big\|^{2}_{L_{2}(\mathcal{M})}\\
	I^{1}_{k,n,3}& \triangleq &\sum_{i\in \mathcal{S}_{k}}\sum_{x\in\Z}
	\Big\|\frac{1}{|A_{n_{i+1}}|}\sum_{y\in \mathcal{I}(x+A_{n_{i+1}}\setminus A_{n_{i}},n)}(q_{n+1}f_np_n)(y)\Big\|^{2}_{L_{2}(\mathcal{M})},
\end{eqnarray*}
We first deal with the term $I^{1}_{k,n,1}$. By Lemma~\ref{KeyLem}, we  have
\begin{equation}\label{Bad-1}
\begin{split}
&\Big\|\frac{1}{|A_{n_{i+1}}|}\sum_{y\in\mathcal{I}(x+A_{n_{i+1}}\setminus A_{n_{i}},n)}(p_{n}fq_n)(y)\Big\|^{2}_{L_{2}(\mathcal{M})}\\
&\lesssim 2^{n-2k}\sum_{y\in \mathcal{I}_1(x+A_{n_{i+1}}\setminus A_{n_{i}},n)}\tau(|\mathsf{E}_n(p_{n}fq_n\1_{x+A_{n_{i+1}}\setminus A_{n_{i}}})(y)|^2),
\end{split}
\end{equation}
where we used the fact that for all $i\in\mathcal S_k$, $2^k\le |A_{n_{i}}|\le2^{k+1}$ and $|\mathcal{I}_1(x+A_{n_{i+1}}\setminus A_{n_{i}},n)|\lesssim 2^{n}$.

Note that $f\1_{x+A_{n_{i+1}}\setminus A_{n_{i}}}$ is positive in $\mathcal{N}$ and $f\1_{x+A_{n_{i+1}}\setminus A_{n_{i}}}\le f$. Since $\mathsf{E}_n$ is a positive map, we obtain $\mathsf{E}_n(f\1_{x+A_{n_{i+1}}\setminus A_{n_{i}}})\le f_n$. Moreover, by the H\"{o}lder inequality,
\begin{equation}\label{bad-s1}
\begin{split}
	&\quad\tau(|p_n(y)\mathsf{E}_n(f\1_{x+A_{n_{i+1}}\setminus A_{n_{i}}})(y)q_n(y)|^2)\\
	& =\tau\Big(p_{n}(y)\mathsf{E}_n(f\1_{x+A_{n_{i+1}}\setminus A_{n_{i}}})(y)q_{n}(y)\mathsf{E}_n(f\1_{x+A_{n_{i+1}}\setminus A_{n_{i}}})(y)p_{n}(y)\Big)\\
	& \leq\tau\Big(p_{n}(y)\mathsf{E}_n(f\1_{x+A_{n_{i+1}}\setminus A_{n_{i}}})(y)p_{n}(y)\Big)\|q_{n}(y)\mathsf{E}_n(f\1_{x+A_{n_{i+1}}\setminus A_{n_{i}}})(y)q_{n}(y)\|_{\M}\\
	& \leq\tau\Big(p_{n}(y)\mathsf{E}_n(f\1_{x+A_{n_{i+1}}\setminus A_{n_{i}}})(y)p_{n}(y)\Big)\|q_{n}(y)f_{n}(y)q_{n}(y)\|_{\M}\\
	& \leq\lambda\tau\Big(p_{n}(y)\mathsf{E}_n(f\1_{x+A_{n_{i+1}}\setminus A_{n_{i}}})(y)p_{n}(y)\Big).
\end{split}
\end{equation}
Combining the above estimate with~\eqref{Bad-1}, we get
\begin{align*}
  I^{1}_{k,n,1}\lesssim\lambda2^{n-2k}\sum_{x\in\Z}\sum_{i\in \mathcal{S}_{k}}\sum_{y\in\mathcal{I}_1(x+A_{n_{i+1}}\setminus A_{n_{i}},n)}\tau(p_{n}(y)\mathsf{E}_n(f\1_{x+A_{n_{i+1}}\setminus A_{n_{i}}})(y)p_{n}(y)).
\end{align*}
Note that for any fixed $x\in\Z$, $\cup_{i\in\mathcal S_k}\mathcal{I}(x+A_{n_{i+1}}\setminus A_{n_{i}},n)\subseteq x+A_{2^{k+1}}$, which implies
\begin{equation*}
\begin{split}
&\sum_{i\in \mathcal{S}_{k}}\sum_{y\in\mathcal{I}_1(x+A_{n_{i+1}}\setminus A_{n_{i}},n)}\tau(p_{n}(y)\mathsf{E}_n(f\1_{x+A_{n_{i+1}}\setminus A_{n_{i}}})(y)p_{n}(y))\\
  &=\sum_{i\in\mathcal S_k}\sum_{\substack{I\in\I_n,\\ I\cap\partial(x+A_{n_{i+1}}\setminus A_{n_{i}})\neq\emptyset}}\sum_{y\in I}\tau(\mathsf{E}_n(p_nf\1_{x+A_{n_{i+1}}\setminus A_{n_{i}}}p_n)(y))\\
  &=\sum_{i\in\mathcal S_k}\sum_{\substack{I\in\I_n,\\ I\cap\partial(x+A_{n_{i+1}}\setminus A_{n_{i}})\neq\emptyset}}\sum_{y\in I\cap (x+A_{n_{i+1}}\setminus A_{n_{i}})}\tau(p_n(y)f(y)p_n(y))\\
  &=\sum_{i\in\mathcal S_k}\sum_{y\in\mathcal{I}(x+A_{n_{i+1}}\setminus A_{n_{i}},n)}\tau(p_n(y)f(y)p_n(y))\\
  &\le \sum_{y\in x+A_{2^{k+1}}}\tau(p_n(y)f(y)p_n(y)),\\
\end{split}
\end{equation*}
where in the second equality, {we used the fact that $I\in\sigma_n$, so $\chi_I\otimes\1_\M\in \mathcal N_n$ and $\varphi\circ\mathsf{E}_n=\varphi$.} Therefore, putting these observations together, we finally get
\begin{align*}
   I^{1}_{k,n,1}&\lesssim\lambda2^{n-2k}\sum_{x\in\Z}\sum_{y\in x+A_{2^{k+1}}}\tau(p_n(y)f(y)p_n(y))\lesssim\lambda2^{n-k}\sum_{x\in\Z}\tau(p_n(x)f(x)p_n(x))\\
   &\lesssim\lambda^22^{n-k}\sum_{x\in\Z}\tau(p_n(x))=\lambda^22^{n-k}\|p_n\|_2^2,
\end{align*}
where the second inequality followed from the Fubini theorem and the last inequality from the trace-preserving property of $\mathsf{E}_n$  and (\ref{czd1123}). This finishes the proof of $I^{1}_{k,n,1}$.

We now turn to the terms
$I^{1}_{k,n,2}$ and $I^{1}_{k,n,3}$. Note that $q_{n+1}\in\mathcal N_n$ and
\begin{align}\label{bad3423729}
q_{n+1}f_nq_{n+1}\lesssim q_{n+1}f_{n+1}q_{n+1}\leq\lambda.
\end{align}
Then the argument \eqref{bad-s1} also works for the terms $I^{1}_{k,n,2}$ and $I^{1}_{k,n,3}$. As a consequence, we can estimate these two terms in a similar way as in the proof of $I^{1}_{k,n,1}$. Hence, we omit the details and the proof is complete.
\end{proof}

\medskip

\subsubsection{{\bf{Estimate of}} $I^{2}_{k,n}$}
\begin{lem}\label{bad12333}\rm
	For $n< k$, we have
	$$I^{2}_{k,n}\lesssim2^{n-k}\lambda^{2}\|p_{n}\|^{2}_{2}.$$
\end{lem}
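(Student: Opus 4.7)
The strategy is to mirror the argument of Lemma \ref{bad-1}, with the averaging set $A_{n_{i+1}}\setminus A_{n_i}$ there replaced by $A_{n_i}$ and with the different prefactor $(1/|A_{n_i}|-1/|A_{n_{i+1}}|)^2$ tracked separately. Using the cancellation $\mathsf E_n b_n=0$ I would first rewrite
\[
\sum_{y\in x+A_{n_i}}b_n(y)=\sum_{y\in\mathcal I(x+A_{n_i},n)}b_n(y),
\]
decompose $b_n$ via \eqref{bad34237289} as $p_nfq_n+q_{n+1}fp_n-q_{n+1}f_np_n$ to obtain three sub-terms $I^2_{k,n,1}, I^2_{k,n,2}, I^2_{k,n,3}$, and apply Lemma \ref{KeyLem} to each. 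Since $|\mathcal I_1(x+A_{n_i},n)|\lesssim 2^n$, the $L_2(\M)$-norm squared is controlled by $2^n$ times a sum of traces over $\mathcal I_1(x+A_{n_i},n)$; the H\"older-type step in \eqref{bad-s1} (or \eqref{bad3423729} for the $q_{n+1}$-first sub-terms) then produces a factor of $\lambda$ and brings in $\tau(p_n(y)\mathsf E_n(f\1_{x+A_{n_i}})(y)p_n(y))$, and the $\mathsf E_n$-absorption used in Lemma \ref{bad-1} reduces this $y$-sum to $\sum_{y\in\mathcal I(x+A_{n_i},n)}\tau(p_n(y)f(y)p_n(y))$.

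The crucial new step is a Fubini/multiplicity argument: interchanging $\sum_x$ and $\sum_y$, I would show that for each fixed $y$ the set $\{x\in\Z:y\in\mathcal I(x+A_{n_i},n)\}$ has cardinality $\lesssim 2^n$, since the boundary points $x$ and $x+n_i$ of $x+A_{n_i}$ must lie in the dyadic $n$-interval containing $y$, giving at most $2^n$ choices for $x$ from each constraint. Combined with the global identity $\sum_y\tau(p_n(y)f(y)p_n(y))=\varphi(p_nf_np_n)\lesssim \lambda\|p_n\|_2^2$ via \eqref{czd1123}, this yields
\[
I^2_{k,n,1}\lesssim \lambda^2\cdot 2^{2n}\sum_{i\in\mathcal S_k}\Bigl(\frac1{|A_{n_i}|}-\frac1{|A_{n_{i+1}}|}\Bigr)^2\|p_n\|_2^2.
\]

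Finally, since $|A_{n_i}|\asymp|A_{n_{i+1}}|\asymp 2^k$ for $i\in\mathcal S_k$, the prefactor is comparable to $(n_{i+1}-n_i)^2/2^{4k}$. The two elementary bounds $\max_{i\in\mathcal S_k}(n_{i+1}-n_i)\le 2^k$ and $\sum_{i\in\mathcal S_k}(n_{i+1}-n_i)\le 2^k$ (the latter from disjointness of the sub-intervals $[n_i,n_{i+1})\subseteq[2^k,2^{k+1})$) give $\sum_{i\in\mathcal S_k}(n_{i+1}-n_i)^2\le 2^{2k}$, hence $\sum_{i\in\mathcal S_k}(1/|A_{n_i}|-1/|A_{n_{i+1}}|)^2\lesssim 2^{-2k}$ and therefore $I^2_{k,n,1}\lesssim 2^{2(n-k)}\lambda^2\|p_n\|_2^2\le 2^{n-k}\lambda^2\|p_n\|_2^2$ since $n<k$; the sub-terms $I^2_{k,n,2}$ and $I^2_{k,n,3}$ are handled identically. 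The main subtlety, compared to Lemma \ref{bad-1}, is that the sets $\mathcal I(x+A_{n_i},n)$ are \emph{not} disjoint across $i$, so the per-$x$ containment in $x+A_{2^{k+1}}$ used there must be replaced by the per-$y$ multiplicity bound above; the extra $(n_{i+1}-n_i)^2$ weight coming from the telescoping prefactor is precisely what compensates this loss.
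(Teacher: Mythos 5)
Your proposal is correct, and in fact yields the slightly stronger bound $I^2_{k,n}\lesssim 2^{2(n-k)}\lambda^2\|p_n\|_2^2$, of which the required $2^{n-k}$ is a consequence since $n<k$. The opening moves---the cancellation reduction, the splitting via \eqref{bad34237289} into $I^2_{k,n,1},I^2_{k,n,2},I^2_{k,n,3}$, and the application of Lemma~\ref{KeyLem}---match the paper's proof. The divergence is in the estimate of the inner sum over $x$ and $y$. The paper applies the crude pointwise bound $\tau(|\mathsf{E}_n(p_nfq_n\1_{x+A_{n_i}})(y)|^2)\lesssim\lambda^2\tau(p_n(y))$, which is uniform in both $x$ and $i$; it then enlarges the $y$-range from $\mathcal I_1(x+A_{n_i},n)$ to $x+A_{2^{k+2}}$ and evaluates $\sum_x\sum_{y\in x+A_{2^{k+2}}}\tau(p_n(y))\sim 2^k\|p_n\|_2^2$. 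You instead retain the sharper H\"older bound $\lesssim\lambda\,\tau(p_n(y)\mathsf{E}_n(f\1_{x+A_{n_i}})(y)p_n(y))$ as in the $I^1_{k,n,1}$ estimate of Lemma~\ref{bad-1}, absorb $\mathsf{E}_n$ to pass to $\sum_{y\in\mathcal I(x+A_{n_i},n)}\tau(p_n(y)f(y)p_n(y))$, and then Fubini using the per-$y$ multiplicity count $|\{x:y\in\mathcal I(x+A_{n_i},n)\}|\lesssim 2^n$ (each boundary condition forces $x$ into a translate of $I_{y,n}$); this replaces the $2^k$ by $2^n$, buying the extra $2^{n-k}$. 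Your bound $\sum_{i\in\mathcal S_k}(1/|A_{n_i}|-1/|A_{n_{i+1}}|)^2\lesssim 2^{-2k}$ via $\sum_i d_i^2\le(\max_i d_i)(\sum_i d_i)\le 2^{2k}$ for $d_i=n_{i+1}-n_i$ is equivalent to the paper's telescoping estimate $\sum_i(1/|A_{n_i}|-1/|A_{n_{i+1}}|)^2\le\bigl(\sum_i(1/|A_{n_i}|-1/|A_{n_{i+1}}|)\bigr)^2$. One small framing remark: the paper's $I^2$ argument also never sums the sets $\mathcal I(x+A_{n_i},n)$ over $i$---the $i$-sum is peeled off entirely through the $\sum_i(1/|A_{n_i}|-1/|A_{n_{i+1}}|)^2$ prefactor---so your multiplicity count is a genuine refinement of the Fubini step (improving the constant) rather than a repair of an obstruction the paper would otherwise have to face.
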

\begin{proof}
By the same arguments of $I^{1}_{k,n}$, that is using the cancellation property and the definition of $b_n$, we are reduced to estimating the following three terms:
\begin{eqnarray*}
	I^{2}_{k,n,1}& \triangleq &\sum_{i\in \mathcal{S}_{k}}\Big(\frac{1}{|A_{n_{i}}|}-\frac{1}{|A_{n_{i+1}}|}\Big)^{2}\sum_{x\in\Z}\Big\|\sum_{y\in \mathcal{I}(x+A_{n_{i}},n)}(p_n f q_n)(y)\Big\|^{2}_{L_{2}(\mathcal{M})}\\
	I^{2}_{k,n,2}& \triangleq &\sum_{i\in \mathcal{S}_{k}}\Big(\frac{1}{|A_{n_{i}}|}-\frac{1}{|A_{n_{i+1}}|}\Big)^{2}\sum_{x\in\Z}\Big\|\sum_{y\in \mathcal{I}(x+A_{n_{i}},n)}(q_{n+1} f p_n)(y)\Big\|^{2}_{L_{2}(\mathcal{M})}\\
	I^{2}_{k,n,3}& \triangleq &\sum_{i\in \mathcal{S}_{k}}\Big(\frac{1}{|A_{n_{i}}|}-\frac{1}{|A_{n_{i+1}}|}\Big)^{2}\sum_{x\in\Z}\Big\|\sum_{y\in \mathcal{I}(x+A_{n_{i}},n)}(q_{n+1} f_n p_n)(y)\Big\|^{2}_{L_{2}(\mathcal{M})}.
\end{eqnarray*}

We begin with the term $I^{2}_{k,n,1}$. Using Lemma~\ref{KeyLem}, the fact that $\mathcal{I}_1(x+A_{n_{i}},n)\subseteq x+A_{2^{k+2}}$ and $|\mathcal{I}_{1}(x+A_{n_{i}},n)|\lesssim2^{n}$ for all ${i\in\mathcal{S}_k}$, we deduce that
\begin{align*}
  \Big\|\sum_{y\in\mathcal{I}(x+A_{n_{i}},n)}(p_{n}fq_n)(y)\Big\|^{2}_{L_{2}(\M)}&\lesssim 2^{n}\sum_{y\in \mathcal{I}_1(x+A_{n_{i}},n)}\tau(|\mathsf{E}_n(p_{n}fq_n\1_{x+A_{n_{i}}})(y)|^2)\\
  &\le 2^{n}\sum_{y\in x+A_{2^{k+2}}}\tau(|\mathsf{E}_n(p_{n}fq_n\1_{x+A_{n_{i}}})(y)|^2).
\end{align*}
Since $\mathsf{E}_n$ is a unital completely positive map, 
by H\"{o}lder's inequality and (\ref{czd1123}), we obtain
\begin{align*}
\tau(|\mathsf{E}_n(p_{n}fq_n\1_{x+A_{n_{i}}})(y)|^2)\leq\tau\Big(p_{n}(y)f_{n}(y)p_{n}(y)\Big)\|q_{n}(y)f_{n}(y)q_{n}(y)\|_{\M}\lesssim\lambda^{2}\tau(p_{n}(y)).
\end{align*}
Combining the above observations with the fact that for $i\in\mathcal{S}_{k}$, $A_{2^{k}}\subset A_{n_{i}}\subset A_{2^{k+1}}$, we have
\begin{align*}
  I^{2}_{k,n,1}&\lesssim2^{n}\lambda^2\sum_{i\in \mathcal{S}_{k}}\Big(\frac{1}{|A_{n_{i}}|}-\frac{1}{|A_{n_{i+1}}|}\Big)^{2}\sum_{x\in\Z}\sum_{y\in x+A_{2^{k+2}}}\tau(p_{n}(y))\\
  &\le 2^{n}\lambda^2\Big(\frac{1}{|A_{2^{k}}|}-\frac{1}{|A_{2^{k+1}}|}\Big)^2 \sum_{x\in\Z}\sum_{z\in x+A_{n_{2^{k+2}}}} \tau(p_{n}(z))\\
  &\lesssim 2^{n-k}\lambda^2\|p_n\|_{2}^2.
\end{align*}

The same arguments also work  for the terms $I^{2}_{k,n,2}$ and $I^{2}_{k,n,3}$  just by noticing the relation (\ref{bad3423729}) and $q_{n+1}\in\mathcal N_n$, we omit the proofs. The lemma is proved.
\end{proof}

\bigskip

\begin{proof}[Proof of estimate \eqref{bad1}.]
By Lemmas \ref{badfunction1}, \ref{bad-1} and \ref{bad12333},  we conclude the desired estimate \eqref{bad1} and complete the argument for $Tb$.
\end{proof}
\subsection{Weak type estimate for the good function:~(\ref{HLX15})}\quad

In order to estimate the good part, we need the following proposition, which is a Hilbert-valued analogue of the commutative result. Its proof could be done quite similarly as in the classical setting (see e.g.~\cite{JRW98}) and we omit it.
\begin{prop}\label{t1}
Let $h\in L_{2}(\mathcal N)$. Then
$$\|Th\|_{L_{2}(L_{\infty}(\Omega)\overline{\otimes}\mathcal N)}\lesssim\|h\|_{2}.$$
\end{prop}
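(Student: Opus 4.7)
My plan is to reduce the inequality to a scalar Fourier multiplier estimate on the torus $\mathbb T$ via orthogonality of the Rademachers and Plancherel's theorem on $\ell_2(\mathbb Z;L_2(\mathcal M))$, and then to carry out the multiplier bound as in the classical setting of \cite{JRW98}. First I would square the norm: by the orthonormality of $\{\varepsilon_i\}$ in $L_2(\Omega)$ together with the Fubini property of $L_2(L_\infty(\Omega)\overline\otimes\mathcal N)$,
\[
\|Th\|_{L_2(L_\infty(\Omega)\overline\otimes\mathcal N)}^2=\sum_{i\in\mathcal S}\|T_ih\|_{L_2(\mathcal N)}^2,\qquad T_i=M_{A_{n_i}}-M_{A_{n_{i+1}}}.
\]

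Next, each averaging operator $M_{A_n}$ is a convolution on $\mathbb Z$ with scalar symbol $\widehat M_{A_n}(\xi)=|A_n|^{-1}\sum_{y\in A_n}e^{2\pi i y\xi}$. Identifying $L_2(\mathcal N)$ with $\ell_2(\mathbb Z;L_2(\mathcal M))$ and applying Plancherel fibrewise in the $L_2(\mathcal M)$-variable, the desired inequality becomes the uniform scalar estimate
\[
\sup_{\xi\in\mathbb T}\;\sum_{i\in\mathcal S}|m_i(\xi)|^2\le C,\qquad m_i(\xi):=\widehat M_{A_{n_i}}(\xi)-\widehat M_{A_{n_{i+1}}}(\xi).
\]

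To prove the multiplier bound I would decompose $\mathcal S=\bigcup_k\mathcal S_k$ where $\mathcal S_k$ collects the indices with $[n_i,n_{i+1})\subseteq[2^k,2^{k+1})$, fix $\xi\in(0,1/2)$, and set $k_\xi:=\lfloor\log_2|\xi|^{-1}\rfloor$. For $k\le k_\xi$ (low frequency), a first-order Taylor expansion of the Dirichlet kernel gives $|m_i(\xi)|\lesssim(n_{i+1}-n_i)|\xi|$; combined with the trivial estimate $\sum_{i\in\mathcal S_k}(n_{i+1}-n_i)^2\le 2^{2k}$ (the intervals being disjoint inside $[2^k,2^{k+1})$), this yields $\sum_{i\in\mathcal S_k}|m_i(\xi)|^2\lesssim 2^{2k}|\xi|^2$, which sums to a bounded geometric series in $k\le k_\xi$. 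For $k>k_\xi$ (high frequency), I would use the rewriting
\[
m_i(\xi)=\frac{n_{i+1}-n_i}{|A_{n_{i+1}}|}\,\widehat M_{A_{n_i}}(\xi)-\frac{1}{|A_{n_{i+1}}|}\sum_{y=n_i+1}^{n_{i+1}}e^{2\pi i y\xi}
\]
together with the Dirichlet bound $|\widehat M_{A_n}(\xi)|\lesssim(2^k|\xi|)^{-1}$ for $n\asymp 2^k$, subdividing $\mathcal S_k$ further according to whether $(n_{i+1}-n_i)|\xi|\le 1$ or $>1$; in both cases one obtains $\sum_{i\in\mathcal S_k}|m_i(\xi)|^2\lesssim(2^k|\xi|)^{-1}$, and the geometric series in $k>k_\xi$ converges to a constant.

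The only genuine work lies in the multiplier estimate of the third paragraph. Because all noncommutative content has been absorbed into the Plancherel identification, the analysis reduces entirely to classical harmonic analysis on $\mathbb Z$ and proceeds exactly as in \cite{JRW98}; this is why the authors invoke the classical reference and omit the details.
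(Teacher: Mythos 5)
Your proposal is correct and takes exactly the route the paper has in mind: the paper omits the proof with a pointer to \cite{JRW98}, and what you describe (Rademacher orthogonality to convert the $L_2(L_\infty(\Omega)\overline{\otimes}\mathcal N)$ norm into $\sum_{i}\|T_ih\|_{L_2(\mathcal N)}^2$, then Plancherel on $\ell_2(\mathbb Z;L_2(\mathcal M))$ to reduce to the scalar bound $\sup_\xi\sum_{i\in\mathcal S}|m_i(\xi)|^2\lesssim 1$) is precisely why the noncommutative $L_2$ case is no harder than the commutative one.

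One small caution on the low-frequency step: the phrase ``first-order Taylor expansion of the Dirichlet kernel'' is a slight shortcut. A naive Taylor estimate gives $|m_i(\xi)|\lesssim\delta_i|\xi|+(2^k|\xi|)^2$ with $\delta_i=n_{i+1}-n_i$, and the $(2^k|\xi|)^2$ error term (which is independent of $i$) would ruin the $\ell_2$ sum over $\mathcal S_k$ since $\#\mathcal S_k$ may be as large as $2^k$. The clean way to get the Lipschitz bound $|m_i(\xi)|\lesssim\delta_i|\xi|$ is to use the very rewriting you already display for the high-frequency case,
$$m_i(\xi)=\frac{\delta_i}{|A_{n_{i+1}}|}\Bigl(\widehat M_{A_{n_i}}(\xi)-\frac{1}{\delta_i}\sum_{y=n_i+1}^{n_{i+1}}e^{2\pi iy\xi}\Bigr),$$
and observe that each parenthesized term is an average of $e^{2\pi iy\xi}$ with $0\le y\lesssim 2^{k+1}$, hence equals $1+O(2^k|\xi|)$; this directly yields $|m_i(\xi)|\lesssim\frac{\delta_i}{2^k}\cdot 2^k|\xi|=\delta_i|\xi|$ with no spurious error term. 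With that adjustment, both the low- and high-frequency sums close as you state, and the proof is complete.
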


With this proposition at hand, we prove easily the estimate \eqref{HLX15}.
\begin{proof}[Proof of estimate \eqref{HLX15}.]
We clearly have
$$\widetilde{\varphi}(\chi_{(\lambda/2,\infty)}(|Tg|))\leq\frac{\|Tg\|^{2}_{L_{2}(L_{\infty}(\Omega)\overline{\otimes}\mathcal N)}}{\lambda^{2}}
\lesssim\frac{\|g\|_{2}^{2}}{\lambda^{2}}\leq\frac{\|g\|_{1}\|g\|_{\infty}}{\lambda^{2}}
\lesssim\frac{\|f\|_{1}}{\lambda},$$
as a consequence of the Chebychev inequality, Proposition~\ref{t1}, the H\"{o}lder inequality and conclusion (ii) in Theorem \ref{czdecom}.
This completes the proof.
\end{proof}

\section{Proof of Theorem \ref{t5343232}: $(L_{\infty},\mathrm{BMO})$ and strong type $(p,p)$ estimates}\label{strong}
In this section, we examine the $(L_{\infty},\mathrm{BMO})$ and strong type $(p,p)$ estimates stated in Theorem \ref{t5343232}.

\subsection{$(L_{\infty},\mathrm{BMO})$ estimate}\quad

\medskip

We first recall the definition of $\mathrm{BMO}$ spaces associated
to the von Neumann algebra $\mathcal R=\mathcal N
\overline{\otimes} \mathcal{B}(\ell_{2})$ equipped with the tensor trace $\psi=\varphi\otimes tr$ where $tr$ is the canonical trace on $\mathcal{B}(\ell_{2})$.
According to \cite{MP}, the dyadic $\mathrm{BMO}$ space $\mathrm{BMO}_{d}(\mathcal R)$ is defined as a subspace of $L_{\infty}(\M\overline{\otimes} \mathcal{B}(\ell_{2});L_2^{rc}(\Z;dx/(1+|x|)^{2}))$ with
$$\|f\|_{\mathrm{BMO}_{d}(\mathcal R)} \, = \, \max \Big\{
\|f\|_{\mathrm{BMO}_{\! d}^r(\mathcal R)}, \|f\|_{\mathrm{BMO}_{\! d}^c(\mathcal R)}
\Big\} \, < \, \infty,$$ where the row and column dyadic $\mathrm{BMO}_{d}$
norms are given by
\begin{eqnarray*}
\|f\|_{\mathrm{BMO}_{\! d}^r(\mathcal R)} & = & \sup_{I \in\I} \Big\| \Big(
\frac{1}{|I|} \sum_{x\in I} \Big|\big(f(x) -\frac1{|I|}\sum_{y\in I} f(y)\big)^{\ast}\Big|^2 \Big)^{\frac12} \Big\|_{\M\overline{\otimes} \mathcal{B}(\ell_{2})}, \\
\|f\|_{\mathrm{BMO}_{\! d}^c(\mathcal R)} & = & \sup_{I \in\I} \Big\| \Big(
\frac{1}{|I|} \sum_{x\in I} \Big|f(x) -\frac1{|I|}\sum_{y\in I} f(y)\Big|^2  \Big)^{\frac12} \Big\|_{\M\overline{\otimes} \mathcal{B}(\ell_{2})}.
\end{eqnarray*}

By the definition of BMO spaces, we are reduced to showing
\begin{align}\label{94}
\Big\|
\sum_{i\in \mathcal{S}} T_i \hskip-1pt f \otimes e_{i1}
\Big\|_{\mathrm{BMO}_{d}(\mathcal{R})} \lesssim \,
\|f\|_\infty,
\end{align}
and
\begin{align}\label{924}
	\Big\|
	\sum_{i\in \mathcal{S}} T_i \hskip-1pt f \otimes e_{1i}
	\Big\|_{\mathrm{BMO}_{d}(\mathcal{R})} \lesssim \,
	\|f\|_\infty.
\end{align}
However, it suffices to estimate (\ref{94}) since~\eqref{924} can be obtained just by noting $(T_if)^*=T_i(f^*)$.  Notice that (\ref{94}) is equivalent to
\begin{align}\label{95}
\Big\|
\sum_{i\in \mathcal{S}} T_i \hskip-1pt f \otimes e_{i1}
\Big\|_{\mathrm{BMO}_{d}^{c}(\mathcal R)} \lesssim \,
\|f\|_\infty
\end{align}
and
\begin{align}\label{96}
\Big\|
\sum_{i\in \mathcal{S}} T_i \hskip-1pt f \otimes e_{i1}
\Big\|_{\mathrm{BMO}_{d}^{r}(\mathcal R)} \lesssim \,
\|f\|_\infty.
\end{align}

Now let us prove (\ref{94}).
\begin{proof}[Proof of (\ref{94}).]
Let $f\in L_{\infty}(\mathcal N)$ and $I$ be a dyadic cube in $\Z$. Decompose $f$ as
 $f=f\chi_{ 3I}+f\chi_{\Z\setminus 3I}\triangleq f_1+f_2$, where $3I$ denotes the interval with the same center as $I$ such that $|3I|=3|I|$. If we set $\alpha_{I,i}=T_{i}f_2(c_I)$ where $c_I$ is the center of $I$ or the left neighborhood in $\mathbb Z$ of the center if the center does not belong to $\mathbb Z$, and $\alpha_{I}=\sum\limits_{i}\alpha_{I,i}\otimes e_{i1}$, then
$$T_{i}f(x)-\alpha_{I,i}=T_{i}f_1(x)+(T_{i}f_2(x)-\alpha_{I,i})\triangleq B_{i1}f+B_{i2}f.$$
We first prove (\ref{95}). By the operator convexity of square function $x\mapsto|x|^{2}$, we obtain
$$\big|\sum_{i\in \mathcal{S}}(T_{i}f-\alpha_{I,i})\otimes e_{i1}\big|^{2}\leq2\big|\sum_{i\in \mathcal{S}}B_{i1}f\otimes e_{i1}\big|^{2}+2\big|\sum_{i\in \mathcal{S}}B_{i2}f\otimes e_{i1}\big|^{2}.$$
The first term $B_{1}f=\sum\limits_{i} B_{i1} \hskip-1pt f \otimes e_{i1}$ is easy to estimate. Indeed,
\begin{align*}
  \Big\| \Big(\frac{1}{|I|}\sum_{x\in I} (B_1
f(x))^\ast (B_1f(x)) \Big)^{\frac{1}{2}} \Big\|_{\M \overline{\otimes}
\mathcal{B}(\ell_{2})}^{2}&=\Big\| \Big(\frac{1}{|I|}\sum_{x\in I} \sum_{i\in \mathcal{S}}|T_{i}f_{1}(x)|^{2} \Big)^{\frac{1}{2}} \Big\|^{2}_{\M}\\
& =\frac{1}{|I|}\Big\| \sum_{x\in I} \sum_{i\in \mathcal{S}}|T_{i}f_{1}(x)|^{2} \Big\|_{\M}\\
& \le \frac{1}{|I|} \sup_{\|a\|_{L_2(\M)} \le 1}\tau \sum_{x\in\Z}
\sum_{i\in \mathcal{S}}|T_{i}f_{1}(x)a|^{2}\\
& = \frac{1}{|I|} \sup_{\|a\|_{L_2(\M)} \le 1} \|(T_{i}(f
\chi_{3I}a))_{i\in\mathcal{S}}\|_{L_2(\mathcal{N};\ell_{2}^{rc})}^2\\
& \lesssim \|f\|_\infty^{2},
\end{align*}
where in the third inequality, we considered elements in $\M$ as bounded linear operators on $L_{2}(\M)$ via the left multiplication and the last inequality follows from the $L_2$-boundedness of $T$, namely Proposition \ref{t1}.

Now we turn to the second term $B_{2}f=\sum\limits_{i\in \mathcal{S}} B_{i2} \hskip-1pt f \otimes e_{i1}$. Note that
\begin{align*}
&B_{2}f(x)^{\ast}B_{2}f(x)=\sum_{i\in \mathcal{S}}|T_{i}f_{2}(x)-T_{i}f_{2}(c_{I})|^{2}\\
&\ =\sum_{i\in \mathcal{S}}|(M_{A_{n_{i}}}f_{2}(x)-M_{A_{n_{i+1}}}f_{2}(x))-
(M_{A_{n_{i}}}f_{2}(c_{I})-M_{A_{n_{i+1}}}f_{2}(c_{I}))|^2\\
&\ =\sum_{k}\sum_{i\in \mathcal{S}_{k}}|(M_{A_{n_{i}}}f_{2}(x)-M_{A_{n_{i+1}}}f_{2}(x))-
(M_{A_{n_{i}}}f_{2}(c_{I})-M_{A_{n_{i+1}}}f_{2}(c_{I}))|^2\\
&\ \triangleq\sum_{k}\sum_{i\in \mathcal{S}_{k}}|F_{k,i}(x)|^{2}.
\end{align*}
We claim that for any $k$ satisfying $2^{k+1}<|I|$, $F_{k,i}(x)=0$ for any $i\in \mathcal{S}_{k}$ and $x\in I$. Indeed, fix $i\in \mathcal{S}_{k}$ and $x\in I$. Since $2^{k+1}<|I|$ and $f_2$ is supported in $\Z\setminus 3I$, a simple geometric observation implies that both $M_{A_{n_{i+1}}}f_{2}$ and $M_{A_{n_{i}}}f_{2}$ are supported in $\Z\setminus I$. This is precisely the claim.  Hence,
$$B_{2}f(x)^{\ast}B_{2}f(x)=\sum_{k:2^{k+1} \geq |I|}\sum_{i\in \mathcal{S}_{k}}|F_{k,i}(x)|^{2}.$$
In the following, we further split the summation over $\mathcal{S}_{k}$ into two parts by comparing $n_{i+1}-n_i$ and $|I|$. More precisely,
we decompose $B_{2}f(x)^{\ast}B_{2}f(x)$ as
\begin{align*}
  \sum_{k:2^{k+1} \geq |I|}\sum_{{\begin{subarray}{c}
i\in \mathcal{S}_{k} \\ n_{i+1}-n_{i}<|I|
\end{subarray}}}|F_{k,i}(x)|^{2}+\sum_{k:2^{k+1} \geq|I|}\sum_{{\begin{subarray}{c}
i\in \mathcal{S}_{k} \\ n_{i+1}-n_{i}\geq|I|
\end{subarray}}}|F_{k,i}(x)|^{2}.
\end{align*}

Let us estimate terms of the case $n_{i+1}-n_{i}<|I|$. By the operator-convexity of square function $x\mapsto|x|^{2}$,
\begin{align*}
	\begin{split}
	  \sum_{k:2^{k+1} \geq|I|}\sum_{{\begin{subarray}{c}
				i\in \mathcal{S}_{k} \\ n_{i+1}-n_{i}<|I|
	\end{subarray}}}|F_{k,i}(x)|^{2}&\lesssim\sum_{k:2^{k+1} \geq|I|}\sum_{{\begin{subarray}{c}
	i\in \mathcal{S}_{k} \\ n_{i+1}-n_{i}<|I|
\end{subarray}}}|M_{A_{n_{i}}}f_{2}(x)-M_{A_{n_{i+1}}}f_{2}(x)|^2\\
		&\quad+\sum_{k:2^{k+1} \geq|I|}\sum_{{\begin{subarray}{c}
					i\in \mathcal{S}_{k} \\ n_{i+1}-n_{i}<|I|
		\end{subarray}}}|M_{A_{n_{i}}}f_{2}(c_{I})-M_{A_{n_{i+1}}}f_{2}(c_{I})|^2.
	\end{split}
\end{align*}
Now we claim that to complete the argument of the case $n_{i+1}-n_{i}<|I|$, it is enough to show for any $z\in I$
\begin{align}\label{235}
\|M_{A_{n_{i}}}f_{2}(z)-M_{A_{n_{i+1}}}f_{2}(z)\|_{\M}\lesssim\|f\|_{\infty}|I|^{\frac{1}{2}}\Big(
\int_{|A_{n_{i}}|}^{|A_{n_{i+1}}|}\frac{1}{u^{2}}du\Big)^{\frac{1}{2}}.
\end{align}
Indeed, by (\ref{235}) and the Minkowski inequality, we have
\begin{align*}
\Big\|\Big(\frac{1}{|I|}\sum_{x\in I}\sum_{k:2^{k+1} \geq|I|}\sum_{{\begin{subarray}{c}
				i\in \mathcal{S}_{k} \\ n_{i+1}-n_{i}<|I|
	\end{subarray}}}|F_{k,i}(x)|^{2}\Big)^{\frac{1}{2}}\Big\|^2_{\M}
&=\Big\|\frac{1}{|I|}\sum_{k:2^{k+1} \geq|I|}
\sum_{{\begin{subarray}{c}
			i\in \mathcal{S}_{k} \\ n_{i+1}-n_{i}<|I|
\end{subarray}}}|F_{k,i}(x)|^{2}\Big\|_{\M} \\
&\leq\frac{1}{|I|}\sum_{k:2^{k+1} \geq|I|}\sum_{{\begin{subarray}{c}
				i\in \mathcal{S}_{k} \\ n_{i+1}-n_{i}<|I|
	\end{subarray}}}\|F_{k,i}(x)\|_{\M}^{2}\\
	&\lesssim\|f\|^2_{\infty}\sum_{k:2^{k+1} \geq|I|}\sum_{i\in \mathcal{S}_{k} }\int_{|A_{n_{i}}|}^{|A_{n_{i+1}}|}\frac{1}{u^{2}}du\\
		& \leq\|f\|^2_{\infty}\sum_{k:2^{k+1} \geq|I|}\int_{2^{k}}^{2^{k+1}}\frac{1}{u^{2}}du\\
	& \lesssim\|f\|^2_{\infty},
\end{align*}
which is the desired estimate. It remains to show (\ref{235}). To this end, fixing $z\in I$, and applying the H\"{o}lder inequality and the fact that $n_{i+1}-n_{i}<|I|$, we have
\begin{align*}
&	\|M_{A_{n_{i}}}f_{2}(z)-M_{A_{n_{i+1}}}f_{2}(z)\|_{\M}\\
&=\Big\|\Big(\frac{1}{|A_{n_{i}}|}-\frac{1}{|A_{n_{i+1}}|}\Big)\sum_{y\in z+A_{n_{i}}}f_{2}(y)+\frac{1}{|A_{n_{i+1}}|}
\sum_{y\in z+A_{n_{i+1}}\setminus A_{n_{i}}}f_{2}(y)\Big\|_{\M}\\
&\leq\Big(\frac{1}{|A_{n_{i}}|}-\frac{1}{|A_{n_{i+1}}|}\Big)|A_{n_{i}}|\|f_{2}\|_{\infty}+\frac{1}{|A_{n_{i+1}}|}
(|A_{n_{i+1}}|-|A_{n_{i}}|)\|f_{2}\|_{\infty}\\
&\lesssim\|f\|_{\infty}\int_{|A_{n_{i}}|}^{|A_{n_{i+1}}|}\frac{1}{u}du\\
& \lesssim\|f\|_{\infty}|I|^{\frac{1}{2}}\Big(
\int_{|A_{n_{i}}|}^{|A_{n_{i+1}}|}\frac{1}{u^{2}}du\Big)^{\frac{1}{2}}.
\end{align*}

Let us now turn to the case $n_{i+1}-n_{i}\geq |I|$. Likewise, we use the operator-convexity of square function $x\mapsto|x|^{2}$ to obtain
\begin{align*}
	\begin{split}
	\sum_{k:2^{k+1} \geq|I|}\sum_{{\begin{subarray}{c}
				i\in \mathcal{S}_{k} \\ n_{i+1}-n_{i}\geq |I|
	\end{subarray}}}|F_{k,i}(x)|^{2}&\lesssim\sum_{k:2^{k+1} \geq|I|}\sum_{{\begin{subarray}{c}
	i\in \mathcal{S}_{k} \\ n_{i+1}-n_{i}\geq |I|
\end{subarray}}}|M_{A_{n_{i}}}f_{2}(x)-M_{A_{n_{i}}}f_{2}(c_{I})|^2\\
		&\quad+\sum_{k:2^{k+1} \geq|I|}\sum_{{\begin{subarray}{c}
					i\in \mathcal{S}_{k} \\ n_{i+1}-n_{i}\geq|I|
		\end{subarray}}}|M_{A_{n_{i+1}}}f_{2}(x)-M_{A_{n_{i+1}}}f_{2}(c_{I})|^2.
	\end{split}
\end{align*}
In this case, observe that for any $A_{n_{i}}$,
\begin{align*}
	\|M_{A_{n_{i}}}f_{2}(x)-M_{A_{n_{i}}}f_{2}(c_{I})\|_{\M}
	& =\frac{1}{|A_{n_{i}}|}\sum_{y\in\Z}\|f_{2}(y)(\chi_{x+A_{n_{i}}}(y)-\chi_{c_I+A_{n_{i}}}(y))\|_\infty\\
		&\leq \frac{1}{|A_{n_{i}}|}\|f\|_{\infty}|(c_{I}+A_{n_{i}})\Delta (x+A_{n_{i}})|,
\end{align*}
where $\Delta$ denotes the usual symmetric difference of two sets. Note that $|(c_{I}+A_{n_{i}})\Delta (x+A_{n_{i}})|\lesssim |x-c_I|$. Then
$$\|M_{A_{n_{i}}}f_{2}(x)-M_{A_{n_{i}}}f_{2}(c_{I})\|_{\infty}\lesssim\frac{1}{|A_{n_{i}}|}|x-c_I|\|f\|_{\infty}.$$
Moreover, it is not difficult to verify that the number of $i\in \mathcal{S}_{k}$ such that $n_{i+1}-n_i\geq|I|$  is smaller than $\frac{2^k}{|I|}$. Therefore,
\begin{align*}
\Big\|\Big(\frac{1}{|I|}\sum_{x\in I}\sum_{k:2^{k+1} \geq|I|}\sum_{{\begin{subarray}{c}
				i\in \mathcal{S}_{k} \\ n_{i+1}-n_{i}\geq|I|
	\end{subarray}}}|F_{k,i}(x)|^{2}\Big)^{\frac{1}{2}}\Big\|^2_{\infty}&\leq\frac{1}{|I|}\sum_{x\in I}\sum_{k:2^{k+1} \geq|I|}\sum_{{\begin{subarray}{c}
				i\in \mathcal{S}_{k} \\ n_{i+1}-n_{i}\geq|I|
	\end{subarray}}}\|F_{k,i}(x)\|^{2}_{\infty}\\
	& \lesssim\|f\|^{2}_{\infty}\frac{1}{|I|}\sum_{x\in I}\sum_{k:2^{k+1} \geq|I|}\frac{2^k}{|I|}\frac{1}{|A_{n_{i}}|^{2}}|x-c_I|^2\\
	& \leq\|f\|^2_{\infty}\sum_{k:2^{k+1} \geq|I|}\frac{2^k}{|I|}\frac{1}{|A_{n_{i}}|^{2}}|I|^2\\
	&\leq\|f\|^2_{\infty}|I|\sum_{k:2^{k+1} \geq|I|}2^{-k-1}\lesssim\|f\|^2_{\infty},
\end{align*}
where we used the relations $|x-c_I|\leq|I|$ and $|A_{n_i}|\thickapprox2^{k+1}$. So we complete the proof of (\ref{95}).

We now consider (\ref{96}). However, we just deal with the term $B_{1}f=\sum\limits_{i\in \mathcal{S}} B_{i1} \hskip-1pt f \otimes e_{i1}$, since $B_{2}f$ can be treated as before.
We note that
\begin{align*}
& \Big\| \Big(\frac{1}{|I|}\sum_{x\in I} (B_1
f(x)) (B_1f(x))^\ast\Big)^{\frac{1}{2}} \Big\|_{\M  \overline{\otimes}
\mathcal{B}(\ell_{2})}^{2}\\
& = \frac{1}{|I|} \Big\|
\sum_{i_1,i_2\in \mathcal{S}} \Big[ \sum_{x\in I} T_{i_1}f_{1}(x)
T_{i_2} f_{1}^\ast(x) \Big] \otimes e_{i_1,i_2} \Big\|_{\M
 \overline{\otimes} \mathcal{B}(\ell_{2})} \\
&\triangleq \frac{1}{|I|}
\|\Lambda\|_{\M  \overline{\otimes} \mathcal{B}(\ell_{2})}.
\end{align*}
Note that $\Lambda$ is a positive operator acting on $\ell_{2}(L_2(\M))$ $(=L_2(\M;\ell_{2}^{rc}))$. Hence,
\begin{align*}
 &\frac{1}{|I|} \Big\| \sum_{x\in I} (B_1 \hskip-1pt
f(x))(B_1 \hskip-1pt f(x))^\ast \Big\|_{\M  \overline{\otimes}
\mathcal{B}(\ell_{2})} = \frac{1}{|I|} \sup_{\|a\|_{L_2(\M;\ell_{2}^{rc})} \le 1} \big\langle \Lambda a, a \big\rangle \\
& =  \frac{1}{|I|} \sup_{\|a\|_{L_2(\M;\ell_{2}^{rc})} \le 1} \hskip4.7pt
\tau \Big( \big[ \sum_{i_1\in \mathcal{S}} a_{i_1}^\ast \otimes e_{1i_1}
\big] \, \Lambda \,
\big[ \sum_{i_2\in \mathcal{S}} a_{i_2} \otimes e_{i_21} \big] \Big) \\
& = \frac{1}{|I|} \sup_{\|a\|_{L_2(\M;\ell_{2}^{rc})} \le 1}
\hskip4.7pt \tau \sum_{x\in I} \Big| \sum_{i\in \mathcal{S}}
T_i f_{1}^\ast(x) a_i \Big|^2\\
& \lesssim  \frac{1}{|I|} \sup_{\|a\|_{L_2(\M;\ell_{2}^{rc})} \le 1}
\hskip4.7pt \tau \sum_{x\in\Z}\sum_{i\in \mathcal{S}}
|f_{1}^\ast (x)a_i|^2  \\
& \le  \frac{1}{|I|}
\sup_{\|a\|_{L_2(\M;\ell_{2}^{rc})} \le 1}
\hskip4.7pt \tau (\sum_{i:i\in \mathcal{S}}|a_{i}|^2) |3I| \, \|f\|_\infty^2 \ \lesssim
\ \|f\|_\infty^2,
\end{align*}
where in the first inequality we applied Proposition \ref{t1}. This proves (\ref{96}).
Finally, putting all the estimates obtained so far together with their row analogues, we get
$$\max \Big\{ \Big\|
\sum_{i\in \mathcal{S}} T_i \hskip-1pt f \otimes e_{i1}
\Big\|_{\mathrm{BMO}_{d}(\mathcal{R})}, \Big\|
\sum_{i\in \mathcal{S}} T_i \hskip-1pt f \otimes e_{1i}
\Big\|_{\mathrm{BMO}_{d}(\mathcal{R})} \Big\} \lesssim \|f\|_\infty.$$
This completes the proof of the $(L_{\infty},\mathrm{BMO})$ estimate.
\end{proof}
\subsection{Strong type $(p,p)$ estimates}\quad

\medskip

In this subsection, we complete the proof of Theorem \ref{t5343232} by showing the strong type $(p,p)$ estimates.
\begin{prop}\label{11}\rm
Let $1<p<\infty$. Then $(T_i)_{i\in \mathcal{S}}$ is bounded from $L_{p}(\mathcal{N})$ to $L_p(\mathcal{N};
\ell_{2}^{rc})$.
\end{prop}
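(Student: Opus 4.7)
The proof will proceed by interpolation between three endpoint estimates: the weak-type $(1,1)$ bound from part~(i) of Theorem~\ref{t5343232}, the $(L_\infty,\mathrm{BMO})$ estimate just established in this section, and a straightforward $L_2$ bound that should be recorded first. For $f\in L_2(\mathcal N)$, the noncommutative Khintchine inequality (Proposition~\ref{nonkin}) yields
$$
\|(T_if)_{i\in\mathcal S}\|_{L_2(\mathcal N;\ell_2^{rc})} \thickapprox \Big\|\sum_{i\in\mathcal S}\varepsilon_i T_if\Big\|_{L_2(L_\infty(\Omega)\overline{\otimes}\mathcal N)} = \|Tf\|_{L_2(L_\infty(\Omega)\overline{\otimes}\mathcal N)},
$$
and the right-hand side is $\lesssim\|f\|_2$ by Proposition~\ref{t1}.

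For $1<p<2$, the space $L_p(\mathcal N;\ell_2^{rc})$ is defined as the sum $L_p(\mathcal N;\ell_2^c)+L_p(\mathcal N;\ell_2^r)$ with the natural sum norm, which fits cleanly into the real interpolation framework. Applying the noncommutative Marcinkiewicz-type interpolation theorem between the weak $(1,1)$ bound and the strong $(2,2)$ bound obtained above produces the desired strong $(p,p)$ estimate, with a constant that is $O((p-1)^{-1})$ near $p=1$ but finite on any compact subinterval of $(1,2)$. For $2<p<\infty$, one instead uses that $L_p(\mathcal N;\ell_2^{rc})=L_p(\mathcal N;\ell_2^c)\cap L_p(\mathcal N;\ell_2^r)$, so it suffices to control the column and row norms independently. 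Since the $\mathrm{BMO}$ bound was established separately for the column element $\sum_{i} T_if\otimes e_{i1}$ and the row element $\sum_{i} T_if\otimes e_{1i}$, interpolating each of them with the $L_2$ endpoint via the noncommutative complex interpolation identity of Musat type, roughly $(\mathrm{BMO}_d(\mathcal R),L_2(\mathcal R))_{2/p}\approx L_p(\mathcal R)$, yields $L_p$ bounds in the column and row norms separately, which combine to give the intersection norm estimate.

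The main obstacle I anticipate is essentially bookkeeping: verifying that the Musat-type interpolation theorem applies cleanly to our operator-valued dyadic space $\mathrm{BMO}_d(\mathcal R)$, with $\mathcal R=\mathcal N\overline{\otimes}\mathcal B(\ell_2)$. This should cause no real trouble since the dyadic filtration on $\mathcal N$ lifts to $\mathcal R$, the algebra $\mathcal R$ is semifinite, and the statement is by now standard in noncommutative martingale theory. A secondary point is making sure that the real interpolation for $1<p<2$ is being applied to the sum-valued target $L_p(\mathcal N;\ell_2^{rc})$ with the correct identification of intermediate spaces, which has been worked out in detail in earlier works on noncommutative maximal and ergodic inequalities such as \cite{JX}; we will simply invoke it.
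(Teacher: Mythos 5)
Your proposal is essentially correct and follows the same route as the paper: the $L_2$ endpoint from Proposition~\ref{t1}, the weak $(1,1)$ bound, and the $(L_\infty,\mathrm{BMO})$ bound, combined with real interpolation for $1<p<2$ and complex (Musat-type) interpolation for $2<p<\infty$ applied to the column and row linearizations $T_cf=\sum_i T_if\otimes e_{i1}$, $T_rf=\sum_i T_if\otimes e_{1i}$. The one small divergence is in the $1<p<2$ case: the paper sidesteps the delicate question of real interpolation of the sum spaces $L_p(\mathcal N;\ell_2^{rc})$ by first passing to the Rademacher linearization $Tf=\sum_i\varepsilon_i T_if$ (so the interpolation happens between ordinary $L_{1,\infty}$ and $L_2$ on $L_\infty(\Omega)\overline{\otimes}\mathcal N$), and only then translating back to $\ell_2^{rc}$ via Proposition~\ref{nonkin}; your version instead asks for the interpolation identity to hold directly for the $\ell_2^{rc}$-valued spaces, which is a heavier (if ultimately available) technical input than what the paper actually needs.
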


\begin{proof}
Proposition \ref{t1} gives the result for $p=2$. For the case $1<p<2$, by
applying the weak type $(1,1)$ estimate of ${T}$ and Proposition \ref{t1}, we conclude that ${T}$ is bounded from $L_{p}(\mathcal{N})$ to $L_p(L_{\infty}(\Omega)\overline{\otimes}\mathcal{N})$ by real interpolation \cite{P2}. Thus $(T_i)_{i\in \mathcal{S}}$ is bounded from $L_{p}(\mathcal{N})$ to $L_p(\mathcal{N};
\ell_{2}^{rc})$ according to noncommutative Khintchine's inequalities---Proposition~\ref{nonkin}. 

Consider the case $2<p<\infty$.
If we set $T_{c}f=\sum_{i\in \mathcal{S}} T_i \hskip-1pt f \otimes e_{i1}$ and $T_{r}f=\sum_{i\in \mathcal{S}} T_i \hskip-1pt f \otimes e_{1i}$, then Proposition~\ref{t1} and $(L_\infty,\mathrm{BMO})$ estimates yield that $T_{c}$ and $T_{r}$ are bounded from $L_p(\mathcal N)$ to $L_p(\mathcal{N}\overline{\otimes}\mathcal B({\ell_2}))$ via complex interpolation \cite{Mu}. Therefore, $(T_i)$ is bounded from $L_p(\mathcal N)$ to $L_p(\mathcal{N};
\ell_{2}^{rc})$ for all $2< p<\infty$.
\end{proof}

\section{Proof of Theorem~\ref{theorem11}}\label{main}
In this section, we prove Proposition \ref{trans} and Theorem~\ref{theorem11}.
\begin{proof}[Proof of Proposition~\ref{trans}]
In the following we only consider the one-sided ergodic averages while the two-sided ones can be handled quite similarly.
By the standard approximation argument stated in Remark \ref{app}, it suffices to
show for any fixed integer $i_0\geq1$
\begin{align}\label{erg1}
\big\|\big(M_{n_{i}}(T)x-M_{n_{i+1}}(T)x\big)_{0\leq i\leq i_0}\big\|_{L_p(\M;\ell^{rc}_{2})} \lesssim\|x\|_{L_{p}(\M)}.
\end{align}

For each $n\in\N$, define
$$M^{\prime}_{n}:L_{p}(\Z;L_{p}(\M))\rightarrow L_{p}(\Z;L_{p}(\M)),\ M^{\prime}_{n}f(k)=\frac{1}{n+1}\sum_{l=0}^{n}f(l+k),\ \forall k\in\Z.$$
Set $N=n_{i_0+1}$. Let $m$ be a large integer bigger than $N$.  Fix $x\in L_{p}(\M)$. Define a $L_{p}(\M)$-valued function $f_{m}$ on $\Z$ as
$$f_{m}(l)=T^{l}x,\ \ \mbox{if}\ 0\le l\leq m+N;\ \ f_{m}(l)=0,\, \mbox{otherwise}.$$
Then for all $0\leq k\leq m$ and $1\leq n\leq N$,
$$T^{k}M_{n}(T)x=\frac{1}{n+1}\sum_{l=0}^{n}T^{k+l}x=\frac{1}{n+1}\sum_{l=0}^{n}f_{m}(l+k)=M^{\prime}_{n}f_{m}(k).$$
Therefore, for $0\leq k\leq m$ and $0\le i\le i_0$,
$$T^{k}(M_{n_{i}}(T)x-M_{n_{i+1}}(T)x)=M^{\prime}_{n_{i}}f_{m}(k)-M^{\prime}_{n_{i+1}}f_{m}(k).$$
Note that $T$ is a power bounded operator, namely $\sup_{k\in\Z}\|T^k\|_{L_p(\M)\rightarrow L_p(\M)}<\8$. Then for each $k\in\mathbb N$, Lemma~\ref{exten-lin} implies
\begin{align}\label{ergo32}
\begin{split}
  &\big\|\big(M_{n_{i}}(T)x-M_{n_{i+1}}(T)x\big)_{0\leq i\leq i_0}\big\|_{L_p(\M;\ell^{rc}_{2})}\\
& \lesssim\big\|\big(T^{k}(M_{n_{i}}(T)x-M_{n_{i+1}}(T)x)\big)_{0\leq i\leq i_0}\big\|_{L_p(\M;\ell^{rc}_{2})} \\
& =\big\|\big(M^{\prime}_{n_{i}}f_{m}(k)-M^{\prime}_{n_{i+1}}f_{m}(k)\big)_{0\leq i\leq i_0}\big\|_{L_p(\M;\ell^{rc}_{2})}.
\end{split}
\end{align}

Now we prove (\ref{erg1}). Note that by the Fubini theorem and the noncommutative Khintchine inequalities, namely Proposition~\ref{nonkin}, it is clear that $L_p(\M\otimes\ell_\8(\mathbb N);\ell_2^{rc})\approx\ell_p(\mathbb N;L_p(\M;\ell_2^{rc}))$. Based on this observation,~\eqref{ergo32} and the assumption~\eqref{assume-1}, we have
\begin{align*}
&\big\|\big(M_{n_{i}}(T)x-M_{n_{i+1}}(T)x\big)_{0\leq i\leq i_0}\big\|^p_{L_p(\M;\ell^{rc}_{2})}\\ &\lesssim\frac{1}{m+1}\sum_{k=0}^{m}\big\|\big(M^{\prime}_{n_{i}}f_{m}(k)-M^{\prime}_{n_{i+1}}f_{m}(k)\big)_{0\leq i\leq i_0}\big\|^{p}_{L_p(\M;\ell^{rc}_{2})}\\
  &\lesssim\frac{1}{m+1}\big\|\big(M^{\prime}_{n_{i}}f_{m}-M^{\prime}_{n_{i+1}}f_{m}\big)_{0\leq i\leq i_0}\big\|^p_{L_p(\M\overline{\otimes}\ell_{\infty}(\mathbb Z);\ell^{rc}_{2})}\\
  &\lesssim\frac{1}{m+1}\|f_{m}\|^{p}_{L_p(\M\overline{\otimes}\ell_{\infty}(\Z))}\lesssim\frac{m+N+1}{m+1}\|x\|^{p}_{L_{p}(\M)}.
\end{align*}
By the arbitrariness of $m$, we find
$$\big\|\big(M_{n_{i}}(T)x-M_{n_{i+1}}(T)x\big)_{0\leq i\leq i_0}\big\|_{L_p(\M;\ell^{rc}_{2})}\lesssim\|x\|_{L_{p}(\M)}.$$
So we finish the proof of (\ref{erg1}).

\end{proof}

Now, we are able to conclude Theorem~\ref{theorem11}.
\begin{proof}[Proof of Theorem~\ref{theorem11}.]
Actually, Theorem~\ref{theorem11} follows immediately from Proposition~\ref{trans} and Theorem~\ref{Th5}.
\end{proof}

\section{Proof of Theorem \ref{thm-1}}\label{main-thm2}

In this section, we prove Proposition~\ref{extend of Lam} and Theorem~\ref{thm-1}. Before that, we
give some notations and lemmas. We first recall the definitions of {\it{dilations}} and {\it{$N$-dilation}} in the noncommutative setting (see e.g. \cite{HRW}).
\begin{defi}
Let $1\le p\le\infty$ and $T:L_p(\mathcal{M},\tau_{\mathcal M})\rightarrow L_p(\mathcal{M},\tau_{\mathcal M})$ be a contraction. We call the operator $T$ has a {\it{dilation}} if there exist a von Neumann algebra $\mathcal A$ equipped with a normal faithful semifinite trace $\tau_{\mathcal A}$, two contraction linear operators $Q:L_p(\mathcal{A},\tau_{\mathcal A})\rightarrow L_p(\mathcal{M},\tau_{\mathcal M})$, $J:L_p(\mathcal{M},\tau_{\mathcal M})\rightarrow L_p(\mathcal{A},\tau_{\mathcal A})$ and an isometry  $U:L_p(\mathcal{A},\tau_{\mathcal A})\rightarrow L_p(\mathcal{A},\tau_{\mathcal A})$ such that 
	\begin{equation}\label{decom of T}
		T^n=QU^nJ,\ \forall n\in\mathbb{N}\cup\{0\}.
	\end{equation}
\end{defi}
We can use the following diagrams to represent the above decomposition
\begin{displaymath}
	\xymatrix{ L_p(\mathcal M, \tau_{\mathcal M}) \ar[r]^{T^n} \ar[d]_J & L_p(\mathcal M, \tau_{\mathcal M}) \\
		L_p(\mathcal A, \tau_{\mathcal A}) \ar[r]^{U^n}                  & L_p(\mathcal A, \tau_{\mathcal A})\ar[u]_Q}
\end{displaymath}
for all $n\ge 0$.

We call the operator $T$ has an {\it{$N$-dilation}} if~\eqref{decom of T} holds for all $n\in\{0,1,\cdots, N\}$. 
	
Let $1\le p<\8$.  Let $\mathbb{SS}(L_p(\mathcal M))$ denote the set of all Lamperti contractions on $L_p(\mathcal M)$. Moreover, for a given  set $A$ consisting of operators on $L_p(\mathcal M)$, we denote by $\mbox{conv}\{A\}$ the convex hull of $A$, namely
$$\mbox{conv}\{A\}=\bigg\{\sum_{i=1}^n\lambda_iT_i:T_i\in A,\sum_{i=1}^n\lambda_i=1, \lambda_i\ge 0, n\in\mathbb{N}\bigg\}.$$

The following result which can be seen as a {\it{$N$-dilation}} theorem for $\mbox{conv}(\mathbb{SS}(L_p(\mathcal M)))$ was established in~\cite[Corollary 4.7]{HRW}.
\begin{lem}\label{dilation}
Let $1< p<\8$. Each operator $T\in \emph{conv}(\mathbb{SS}(L_p(\mathcal M)))$ has an $N$-dilation for all $N\in\mathbb{N}$.
\end{lem}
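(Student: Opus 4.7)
The plan is to reduce the convex-combination case to the dilation theorem for a single Lamperti contraction---the principal dilation result of~\cite{HRW}---by introducing an auxiliary Bernoulli randomization that realizes the convex weights. Writing $T=\sum_{i=1}^{m}\lambda_{i}T_{i}$ with each $T_{i}$ a Lamperti contraction and $(\lambda_{i})_{i=1}^{m}$ a probability vector, I would start from the expansion
\[
T^{n}\;=\;\sum_{\omega_{1},\dots,\omega_{n}}\lambda_{\omega_{1}}\cdots\lambda_{\omega_{n}}\,T_{\omega_{n}}\cdots T_{\omega_{1}}\;=\;\mathbb{E}_{\omega}\bigl[T_{\omega_{n}}\cdots T_{\omega_{1}}\bigr],
\]
valid for every $0\le n\le N$, where $\omega=(\omega_{k})_{k\ge 1}$ is i.i.d.\ on $\{1,\dots,m\}$ with marginal $(\lambda_{i})$. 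This reduces the task to dilating each random word $T_{\omega_{n}}\cdots T_{\omega_{1}}$ to an isometric action on a common space and then averaging over $\omega$.

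Since Lamperti contractions are closed under composition, each such word is itself a Lamperti contraction, so the single-operator dilation theorem in~\cite{HRW} (built on the structural Jordan $\ast$-morphism-times-partial-isometry decomposition of Lamperti operators combined with a crossed-product construction, which yields an isometry on a larger semifinite noncommutative $L_{p}$-space) applies. I would assemble a common ``joint'' dilation algebra $\mathcal{A}_{\ast}$---for instance a $W^{\ast}$-tensor product of the individual dilation algebras of $T_{1},\dots,T_{m}$---carrying compatible isometries $\widetilde{U}_{1},\dots,\widetilde{U}_{m}$ together with contractions $J_{\ast}\colon L_{p}(\mathcal{M})\to L_{p}(\mathcal{A}_{\ast})$ and $Q_{\ast}\colon L_{p}(\mathcal{A}_{\ast})\to L_{p}(\mathcal{M})$ satisfying $T_{i}^{n}=Q_{\ast}\widetilde{U}_{i}^{n}J_{\ast}$ for every $i$ and every $n\le N$.

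Next, I would adjoin the probabilistic factor: let $\Omega=\{1,\dots,m\}^{\mathbb{Z}}$ carry the Bernoulli product measure $\mathbb{P}$ with marginal $(\lambda_{i})$, set $\mathcal{A}=L_{\infty}(\Omega)\,\bar{\otimes}\,\mathcal{A}_{\ast}$ with trace $\mathbb{P}\otimes\tau_{\mathcal{A}_{\ast}}$, and define the global operator $U$ on $L_{p}(\mathcal{A})$ as the Bernoulli shift on $L_{\infty}(\Omega)$ composed with the fiberwise rule ``apply $\widetilde{U}_{\omega_{0}}$ on the $\mathcal{A}_{\ast}$-factor according to the coordinate $\omega_{0}$.'' Bernoulli invariance of $\mathbb{P}$ together with the isometry of each $\widetilde{U}_{i}$ forces $U$ to be an isometry on $L_{p}(\mathcal{A})$. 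Setting $J(x)=\mathbf{1}_{\Omega}\otimes J_{\ast}(x)$ and $Q=Q_{\ast}\circ\mathbb{E}_{\Omega}$, with $\mathbb{E}_{\Omega}$ the integration over $\Omega$, a direct calculation yields $QU^{n}J=T^{n}$ for $0\le n\le N$: iterating $U$ produces the random word $\widetilde{U}_{\omega_{n-1}}\cdots\widetilde{U}_{\omega_{0}}$ on the $\mathcal{A}_{\ast}$-slot; composing with $J_{\ast}$ and $Q_{\ast}$ and invoking the single-operator dilation relations gives $T_{\omega_{n-1}}\cdots T_{\omega_{0}}$; integration over $\Omega$ together with independence of the $\omega_{k}$ finally returns $T^{n}$.

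The principal obstacle is engineering $U$ so that it is globally isometric on $L_{p}(\mathcal{A})$ while reproducing exactly the correct random word after $n$ iterations---this requires synchronizing the Bernoulli shift with the measurable field $\omega\mapsto \widetilde{U}_{\omega_{0}}$ and verifying the $L_{p}$-isometry on this mixed product, which is where the restriction $1<p<\infty$ enters (via the good duality and interpolation properties of noncommutative $L_{p}$). The restriction to an \emph{$N$}-dilation rather than a full dilation valid for every $n\in\mathbb{N}$ is precisely what allows a finite construction and circumvents the delicate inductive-limit argument that would otherwise be needed to handle convex combinations whose individual dilations do not \emph{a priori} intertwine.
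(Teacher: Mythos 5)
The paper does not prove this lemma itself: it quotes it verbatim from \cite[Corollary 4.7]{HRW}. Your proposal therefore has to stand as a self-contained argument, and it has a genuine gap.

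The Bernoulli-randomization skeleton is sensible (it is close in spirit to Rota-type random-rotation constructions), and the identity $T^{n}=\mathbb{E}_{\omega}[T_{\omega_n}\cdots T_{\omega_1}]$ is correct. The problem is the middle step. You propose to ``assemble a common joint dilation algebra $\mathcal{A}_*$\ldots carrying compatible isometries $\widetilde U_1,\dots,\widetilde U_m$ together with contractions $J_*$, $Q_*$ satisfying $T_i^n=Q_*\widetilde U_i^n J_*$ for every $i$ and $n\le N$'' and then ``invoke the single-operator dilation relations'' to obtain $T_{\omega_{n-1}}\cdots T_{\omega_0}=Q_*\widetilde U_{\omega_{n-1}}\cdots\widetilde U_{\omega_0}J_*$. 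Neither part of this is justified. Dilating each $T_i$ individually produces $m$ separate quadruples $(\mathcal{A}_i,J_i,Q_i,U_i)$; a $W^*$-tensor product of the $\mathcal{A}_i$ does not automatically come with a single $J_*:L_p(\mathcal M)\to L_p(\mathcal A_*)$ and $Q_*:L_p(\mathcal A_*)\to L_p(\mathcal M)$ working simultaneously for all $i$. What you are implicitly requiring is a \emph{simultaneous} dilation of the tuple $(T_1,\dots,T_m)$, which is strictly stronger than the existence of individual dilations and needs its own construction.

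The second part of the step is the more fundamental gap. Even granting a common $(J_*,Q_*)$ with $T_i^n=Q_*\widetilde U_i^n J_*$, this only governs the iterates of a \emph{single} $T_i$; it says nothing about mixed words. Concretely,
\[
T_1T_2=(Q_*\widetilde U_1 J_*)(Q_*\widetilde U_2 J_*)
\]
equals $Q_*\widetilde U_1\widetilde U_2 J_*$ only if $J_*Q_*$ acts as the identity on the range of $\widetilde U_2 J_*$, and nothing in the hypotheses arranges that. This ``word-dilation'' property is precisely what makes dilating a convex combination nontrivial, and it is the entire content of the construction in \cite[Corollary 4.7]{HRW}, which builds the dilation directly from the partial-isometry/Jordan-morphism structure of Lamperti operators so that the required compatibility for arbitrary words holds by design. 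Your Bernoulli skew-product is correctly engineered to recover the average $\mathbb{E}_\omega[\cdot]$ once word dilation is available, but the word dilation itself is the missing ingredient, not a consequence of the single-operator relations.
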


Now we present a characterization theorem for isometric operators established in \cite{Yea81,Jun-Ruan-Sher05}.
Let $\mathcal M$ and $\mathcal A$ be two von Neumann algebras. A complex linear map $J:\mathcal M\rightarrow\mathcal A$ is called a Jordan $*$-homomorphism if $J(x)^*=J(x^*)$ and $J(x^2)=J(x)^2$ for all $x\in\mathcal M$. Moreover, $J$ is called the Jordan $*$-monomorphism if $J$ is an injective Jordan $*$-homomorphism.

The following lemma was obtained by St{\o}rmer~\cite{St65}.
\begin{lem}\label{Jordan}
Let $J:\mathcal M\rightarrow\mathcal A$ be a normal (completely additive, ultraweakly continuous) Jordan $*$-homomorphism. Let $\widetilde{\mathcal{A}}$ denote the von Neumann subalgebra generated by $J(\mathcal{M})$ in $\mathcal{A}$, and $\mathcal{Z}_{\widetilde{\mathcal{A}}}$ be the center of $\widetilde{\mathcal{A}}$. Then there are two projections $e,f\in \mathcal{Z}_{\widetilde{\mathcal{A}}}$ satisfying $e+f=1_{\widetilde{\mathcal{A}}}$, such that the map $x\rightarrow J(x)e$ is a $*$-homomorphism and $x\rightarrow J(x)f$ is a $*$-anti-homomorphism.
\end{lem}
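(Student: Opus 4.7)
The plan is to extract the splitting from the polynomial consequences of $J(x^2) = J(x)^2$ combined with a nilpotent-ideal argument inside $\widetilde{\mathcal A}$. First I would polarize the defining identity: substituting $x+y$ gives the bilinear rule
\begin{equation*}
J(xy+yx) = J(x)J(y) + J(y)J(x),
\end{equation*}
and iterating this polarization yields the cubic identity $J(xyx) = J(x)J(y)J(x)$ together with the three-variable extension
\begin{equation*}
J(xyz+zyx) = J(x)J(y)J(z) + J(z)J(y)J(x).
\end{equation*}
These, combined with the involution compatibility $J(x^*) = J(x)^*$, are the full algebraic content of $J$ being a Jordan $*$-homomorphism.

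Next I would introduce the multiplicative defect $D(x,y) := J(xy) - J(x)J(y)$. The bilinear identity immediately forces $D(x,y) = -D(y,x)$. A standard polynomial computation (going back to the Jacobson--Rickart treatment of Jordan homomorphisms of rings) using the cubic identity shows that products of the form $D(x,y)\,a\,D(u,v)$ with $a\in\widetilde{\mathcal A}$ all vanish. Hence the $\sigma$-weakly closed two-sided ideal $\mathcal K \subset \widetilde{\mathcal A}$ generated by $\{D(x,y):x,y\in\mathcal M\}$ satisfies $\mathcal K^2 = 0$. Since a von Neumann algebra admits no nonzero nilpotent $\sigma$-weakly closed two-sided ideal, the left and right supports of $\mathcal K$ coincide with a common central projection $e \in \mathcal Z_{\widetilde{\mathcal A}}$. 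Setting $f := 1_{\widetilde{\mathcal A}} - e$, one has $D(x,y)f = 0$, i.e.\ $J(xy)f = J(x)J(y)f$, so $x \mapsto J(x)f$ is a genuine $*$-homomorphism (it already respects the involution, and now respects products).

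Finally, I would apply the mirror argument to the reverse defect $\widetilde D(x,y) := J(xy) - J(y)J(x)$, which satisfies the analogous vanishing under the swap $x \leftrightarrow y$, to obtain a central projection under which $x \mapsto J(x)e$ becomes a $*$-anti-homomorphism. The main obstacle, and the delicate point of the argument, is verifying that the two central projections produced are \emph{exact} complements rather than merely orthogonal: here one exploits the consequence of the bilinear identity,
\begin{equation*}
D(x,y) + \widetilde D(x,y) = J(xy) - J(yx) \in J(\mathcal M),
\end{equation*}
together with the fact that $\widetilde{\mathcal A}$ is $\sigma$-weakly generated by $J(\mathcal M)$, to conclude that on the complementary projection of the support of $D$ the reverse defect $\widetilde D$ must vanish. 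Normality of $J$ is used throughout to pass from algebraic identities on $J(\mathcal M)$ to the $\sigma$-weak closure $\widetilde{\mathcal A}$ and to guarantee that $e$ and $f$ lie in $\mathcal Z_{\widetilde{\mathcal A}}$ itself, not merely in some weakly dense $*$-subalgebra.
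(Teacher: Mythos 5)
The paper does not prove this lemma; it quotes it directly from St{\o}rmer~\cite{St65}, so there is no in-paper argument to compare against and your proposal has to stand on its own. It does not: the decisive claim is wrong. You assert that the Jacobson--Rickart manipulations give
\begin{equation*}
D(x,y)\,a\,D(u,v)=0 \qquad \text{for all } a\in\widetilde{\mathcal A},
\end{equation*}
where $D(x,y)=J(xy)-J(x)J(y)$, and conclude that the $\sigma$-weakly closed two-sided ideal $\mathcal K$ generated by the $D(x,y)$ satisfies $\mathcal K^{2}=0$. But in a von Neumann algebra every $\sigma$-weakly closed two-sided ideal has the form $z\widetilde{\mathcal A}$ for a central projection $z$ and is therefore idempotent; $\mathcal K^{2}=0$ would force $\mathcal K=0$, i.e.\ $D\equiv 0$, i.e.\ $J$ is already multiplicative, making the lemma vacuous. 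That is false: for $J=$ transpose on $M_{2}(\mathbb C)$ one has $D(e_{12},e_{21})=e_{11}-e_{22}\neq 0$, and in fact $D(e_{12},e_{21})^{2}=1$. Your sentence ``the left and right supports of $\mathcal K$ coincide with a common central projection $e$'' is already incompatible with $\mathcal K^{2}=0$: that scenario has no nonzero solution.

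The correct Jacobson--Rickart/Herstein identity that St{\o}rmer exploits is of \emph{mixed} type. Writing $T(x,y)=J(xy)-J(y)J(x)$ for the anti-multiplicative defect, the polynomial manipulation yields $D(x,y)\,a\,T(u,v)=0$, first for $a\in J(\mathcal M)$ and then, by normality, for all $a\in\widetilde{\mathcal A}$. This makes the central supports $e$ (of the ideal generated by the $D$'s) and $f$ (of the ideal generated by the $T$'s) orthogonal, and $J(\cdot)(1-e)$ is a $*$-homomorphism while $J(\cdot)(1-f)$ is a $*$-anti-homomorphism. One still has to upgrade $ef=0$ to $e+f=1$: on $1-e-f$ both $D$ and $T$ vanish, so $J$ lands in a commutative part and that piece can be absorbed into either summand. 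Your observation that $D(x,y)+T(x,y)=J(xy)-J(yx)\in J(\mathcal M)$ does not by itself furnish this step. The polarization, the identity $J(xyx)=J(x)J(y)J(x)$, the antisymmetry $D(x,y)=-D(y,x)$, and the use of normality to pass to $\widetilde{\mathcal A}$ are all fine; the proposal fails at the nilpotency claim and leaves the complementarity of $e$ and $f$ unjustified.
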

The following characterization of isometric operators will be frequently used.
\begin{prop}\cite{Yea81,Jun-Ruan-Sher05}\label{isometry}
Let $1\le p\neq 2<\8$. Assume that  $T:L_p(\mathcal{M},\tau_{\mathcal M})\rightarrow L_p(\mathcal{A},\tau_{\mathcal A})$ is a bounded linear operator. Then $T$ is
an isometry if and only if there exist uniquely a partial isometry $w\in\mathcal{A}$, a normal Jordan $*$-monomorphism $J:\mathcal M\rightarrow\mathcal A$, and a positive self-adjoint operator $b$ affiliated with $\mathcal{A}$, such that
\begin{enumerate}[\noindent]
  \item~\emph{(i)}~$w^{*}w=supp b=J(1)$;
  \item~\emph{(ii)}~For all $x\in\mathcal M$, $J(x)$ commutes with every spectral projection of $b$;
  \item~\emph{(iii)}~$T(x)=wbJ(x)$ for all $x\in\mathcal{S}_{\mathcal M}$;
  \item~\emph{(iv)}~$\tau_{\mathcal{A}}(b^pJ(x))=\tau_{\mathcal{M}}(x)$ for all $x\in\mathcal{M}_+$.
\end{enumerate}
\end{prop}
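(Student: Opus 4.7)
My plan follows the Yeadon-type strategy for characterizing $L_p$-isometries via polar decomposition. For the sufficiency direction, given $T(x)=wbJ(x)$ satisfying (i)--(iv), I would apply Lemma~\ref{Jordan} to split $J$ into a $*$-homomorphism on a central projection $e$ and a $*$-anti-homomorphism on $1-e$, both commuting (by (ii)) with every spectral projection of $b$. Together with $J(1)=w^{*}w=\mathrm{supp}(b)$, this yields $|T(x)|^{p}=J(|x|^{p})\,b^{p}$ (or the anti-homomorphic analogue) on the relevant support, and condition (iv) then collapses $\tau_{\mathcal A}(|T(x)|^{p})$ to $\tau_{\mathcal M}(|x|^{p})$, proving that $T$ is an isometry.

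The substantive direction begins with a noncommutative Lamperti-type step: for $p\neq 2$, any isometry $T$ must preserve disjointness in the sense that $(Te)^{*}(Tf)=(Te)(Tf)^{*}=0$ whenever $e,f\in\mathcal M$ are orthogonal $\tau_{\mathcal M}$-finite projections. I would extract this from the equality case of the noncommutative Clarkson inequality applied to the identity $\|T(e+f)\|_{p}^{p}+\|T(e-f)\|_{p}^{p}=2(\|e\|_{p}^{p}+\|f\|_{p}^{p})$, where strict convexity of $\|\cdot\|_{p}$ for $p\neq 2$ forces both the left and right supports of $Te$ and $Tf$ to be orthogonal.

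Next I would apply polar decomposition, writing $T(e)=w_{e}\,|T(e)|$ for each $\tau_{\mathcal M}$-finite projection $e$. The disjointness preservation above guarantees that the partial isometries $w_{e}$ are consistent as $e$ ranges over a filter of projections exhausting the support of $T$, so they assemble into a single partial isometry $w\in\mathcal A$. Defining $J(e)$ as the range projection of $w^{*}T(e)$ and $b$ as the positive self-adjoint operator affiliated with $\mathcal A$ satisfying $|T(e)|=b\,J(e)$, one obtains a map $J$ on projections that is orthoadditive. I would then extend $J$ linearly from projections to $\mathcal{S}_{\mathcal M}$ and by normality to all of $\mathcal M$, and read off (iv) directly from $\tau_{\mathcal A}(b^{p}J(e))=\|T(e)\|_{p}^{p}=\tau_{\mathcal M}(e)$.

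The principal obstacle is upgrading $J$ from an orthoadditive map on projections to a genuine Jordan $*$-homomorphism on $\mathcal M$, i.e.\ establishing $J(x^{2})=J(x)^{2}$ for self-adjoint $x$. The standard device is to expand the scalar identity $\|T(e+\lambda f)\|_{p}^{p}=\|e+\lambda f\|_{p}^{p}$ in the real parameter $\lambda$ for non-orthogonal projections $e,f$, use the tentative formula for $T$ to compute the left-hand side explicitly, and match coefficients. The coefficient that encodes the Jordan identity is nondegenerate precisely because $p\neq 2$; at $p=2$ it vanishes and no such rigidity is available, which is why the hypothesis $p\neq 2$ cannot be removed. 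Uniqueness of the triple $(w,J,b)$ finally follows by comparing any two such decompositions on each spectral projection of $b$ and invoking the faithfulness of $\tau_{\mathcal A}$.
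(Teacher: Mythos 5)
The paper does not prove this proposition: it is imported as a black box from Yeadon~\cite{Yea81} and Junge--Ruan--Sherman~\cite{Jun-Ruan-Sher05}, so there is no in-text argument to compare yours against. That said, your sketch does reproduce the main architecture of Yeadon's original proof: using the equality case of the noncommutative Clarkson inequality for $p\neq 2$ to obtain disjointness preservation $(Te)^{*}Tf = Te(Tf)^{*}=0$ for orthogonal $\tau$-finite projections, taking polar decompositions $T(e)=w_{e}|T(e)|$ and assembling a global partial isometry $w$, defining $J$ on projections as an orthoadditive map, and then upgrading $J$ to a normal Jordan $*$-monomorphism with $p\neq 2$ supplying the rigidity. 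The sufficiency direction is correct in spirit, but the step you wave through, namely collapsing $\tau_{\mathcal A}(|T(x)|^{p})$ to $\tau_{\mathcal M}(|x|^{p})$, does require an extra observation: on the anti-homomorphic piece cut out by $f$ one gets $J(|x^{*}|^{p})$ rather than $J(|x|^{p})$, and passing from one to the other uses that $y\mapsto \tau_{\mathcal A}(b^{p}J(y)f)$ is a tracial positive functional on $\mathcal M$ (which follows from (ii), (iv) and anti-multiplicativity). This is a genuine piece of the argument, not mere bookkeeping.

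Two further cautions on the necessity direction. First, the construction of a single positive self-adjoint $b$ affiliated with $\mathcal A$ out of the local moduli $|T(e)|$ is the hard analytic part of Yeadon's proof; you mention the assembly of $w$ over a filter of projections but pass over the corresponding consistency issue for $b$, which is not automatic. Second, your device for the Jordan identity (expanding $\|T(e+\lambda f)\|_{p}^{p}=\|e+\lambda f\|_{p}^{p}$ in $\lambda$ and matching coefficients) is, as stated, somewhat circular: to expand the left-hand side explicitly you already need structural information about $T(e),T(f)$. Yeadon's own route is more operator-theoretic, working directly with support projections and disjointness to establish orthoadditivity and multiplicativity on commuting pairs before invoking normality. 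These are fixable, but they are precisely the places where the real work lives, and a reader would want to see them carried out rather than gestured at.
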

Now we are ready to prove Proposition~\ref{extend of Lam}.
\begin{proof}[Proof of Proposition~\ref{extend of Lam}]
Let $(x_n)_{1\leq n\leq N}$ be a finite sequence in $L_p(\mathcal M;\ell_2^{rc})$. By an approximation argument, without loss of generality, we may assume that $x_n\in\mathcal{S}_{\M}$ for all $1\le n\le N$. For $p=2$, the conclusion is trivially right. Indeed, in this case
\begin{equation*}
  \big\|\big(T(x_n)\big)_{n}\big\|^2_{L_2(\mathcal M;\ell_2^{rc})}=\sum_{n}\|Tx_n\|^2_{L_2(\M)}=\|(x_n)_{n}\|_{L_2(\M;\ell_2^{rc})}.
\end{equation*}

We now focus on the case $2<p<\8$. 
Since $T$ is an isometric operator, there exist $w,b,J$ such that $T=wbJ$ satisfying properties (i)-(iii) in Proposition~\ref{isometry}. Moreover, by Lemma~\ref{Jordan}, we may find projections $e,f$  with the property that $e$ and $f$ commute with $b$ such that
\begin{equation}\label{isom-T}
\begin{split}
  \big(T(x_n)\big)^*\big(T(x_n)\big)&=\big|\big(bJ(x_n)e+bJ(x_n)f\big)\big|^2=b^2J(x^*_nx_n)e+b^2J(x_nx^*_n)f.
\end{split}
\end{equation}

As a consequence,
	\begin{equation*}
		\begin{split}
			\sum_{n}\big(T(x_n)\big)^*\big(T(x_n)\big)&=b^2J\big(\sum_{n}x^*_nx_n\big)e+b^2J\big(\sum_{n}x_nx^*_n\big)f.
		\end{split}
	\end{equation*}
Set $y_1=\big(\sum_{n}x^*_nx_n\big)^{1/2}$ and $y_2=\big(\sum_{n}x_nx^*_n\big)^{1/2}$. Then
\begin{equation*}
 \sum_{n}\big(T(x_n)\big)^*\big(T(x_n)\big)=b^2J(y_1^2)e+b^2J(y_2^2)f.
\end{equation*}
Moreover, applying the property that $e$ and $f$ commute with $b$ again, we have
\begin{equation}\label{12}
	\begin{split}
		\big\|\big(T(x_n)\big)_{n}\big\|^p_{L_p(\mathcal M;\ell_2^c)}&=\tau\Big(\big(b^2J(y_1^2)e+b^2J(y_2^2)f\big)^{\frac p2}\Big)\\
&=\tau(b^pJ(y_1^p)e)+\tau(b^pJ(y_2^p)f).
	\end{split}
\end{equation}

On the other hand, by the fact that $T=wbJ$, we get
\begin{align*}
   &\big(T(x_n)\big)\big(T(x_n)\big)^*=(wbJ(x_n))(wbJ(x_n))^*\\
   &=wb^2J(x_n)J(x_n)^*w^*=wb^2J(x_nx_n^*)ew^*+wb^2J(x_n^*x_n)fw^*,
\end{align*}
which gives rise to
\begin{equation*}
\sum_{n} \big(T(x_n)\big)\big(T(x_n)\big)^*=wb^2J(y_2^2)ew^*+wb^2J(y_1^2)fw^*.
\end{equation*}
Then by the above observations and Proposition \ref{isometry}(i), we find
\begin{equation}\label{isom-T2}
\begin{split}
   &\big\|\big(T(x_n)\big)_{n}\big\|^p_{L_p(\mathcal M;\ell_2^r)}=\|w(b^2J(y_2^2)e+b^2J(y_1^2)f)w^*\|_{L_{\frac p2}(\M)}^{\frac p2}\\
   &=\|(b^2J(y_2^2)e+b^2J(y_1^2)f)^{\frac12}w^*w(b^2J(y_2^2)e+b^2J(y_1^2)f)^{\frac12}\|_{L_{\frac p2}(\M)}^{\frac p2}\\
   &=\|b^2J(y_2^2)e+b^2J(y_1^2)f\|_{L_{\frac p2}(\M)}^{\frac p2}\\
   &=\tau(b^pJ(y_2^p)e)+\tau(b^pJ(y_1^p)f).
\end{split}
\end{equation}
Therefore, combining (\ref{12}) with (\ref{isom-T2}), we arrive at
\begin{align*}
  &\big\|\big(T(x_n)\big)_{n}\big\|^p_{L_p(\mathcal M;\ell_2^c)}+\big\|\big(T(x_n)\big)_{n}\big\|^p_{L_p(\mathcal M;\ell_2^r)}\\
  &=\tau(b^pJ(y_1^p)e)+\tau(b^pJ(y_2^p)f)+\tau(b^pJ(y_2^p)e)+\tau(b^pJ(y_1^p)f)\\
  &=\tau(b^pJ(y_1^p))+\tau(b^pJ(y_2^p)).
\end{align*}
Moreover, by Proposition~\ref{isometry}(iv), we have
\begin{equation*}
\begin{split}
  \big\|\big(T(x_n)\big)_{n}\big\|^p_{L_p(\mathcal M;\ell_2^c)}+\big\|\big(T(x_n)\big)_{n}\big\|^p_{L_p(\mathcal M;\ell_2^r)}&=\tau(y_1^p)+\tau(y_2^p)\\
  &=\|(x_n)_n\|^p_{L_p(\mathcal M;\ell_2^r)}+\|(x_n)_n\|^p_{L_p(\mathcal M;\ell_2^c)},
\end{split}
\end{equation*}
which yields  $\|\big(T(x_n)\big)_{n}\|_{L_p(\M;\ell_2^{rc})}=\|(x_n)_n\|_{L_p(\M;\ell_2^{rc})}$. So  $T$ is an isometry on $L_p(\M;\ell_2^{rc})$ and we finish the proof of the case $2<p<\infty$.

We then turn to the case $1< p<2$. Since $(x_n)_{n}\in L_p(\mathcal M;\ell_2^{rc})$, for any given $\varepsilon>0$, there are two finite sequences $(g_n)_{n}$ and $(h_n)_{n}$ such that $x_n=g_n+h_n$ and
	\begin{equation}\label{Mix-norm}
		\|(g_n)_{n}\|^p_{L_p(\mathcal M;\ell_2^{c})}+\|(h_n)_{n}\|^p_{L_p(\mathcal M;\ell_2^{r})}\le \|(x_n)_{n}\|^p_{L_p(\mathcal M;\ell_2^{rc})}+\varepsilon.
	\end{equation}
Then by Lemma \ref{Jordan}, we may decompose $T(x_n)$ as
\begin{equation*}
  T(x_n)=T(g_n)e+T(h_n)f+(T(g_n)f+T(h_n)e).
\end{equation*}
By the definition of the norm $\|\cdot\|_{L_p(\mathcal M;\ell_2^{rc})}$, we have
\begin{equation}\label{p<2}
	\begin{split}
		&\big\|\big(T(x_n)\big)_{n}\big\|^p_{L_p(\mathcal M;\ell_2^{rc})}\\
&\le \big\|\big(T(g_n)e+T(h_n)f\big)_{n}\big\|^p_{L_p(\mathcal M;\ell_2^{c})}+
\big\|\big(T(g_n)f+T(h_n)e\big)_{n}\big\|^p_{L_p(\mathcal M;\ell_2^{r})}.
\end{split}
\end{equation}

Set $y_3=\big(\sum_{n}g^*_ng_n\big)^{1/2}$ and $y_4=\big(\sum_{n}h_nh^*_n\big)^{1/2}$.  Similar to (\ref{12}), one has
\begin{equation}\label{isom-T4}
   \big\|\big(T(g_n)e+T(h_n)f\big)_{n}\big\|^p_{L_p(\mathcal M;\ell_2^{c})}=\tau(b^pJ(y_3^p)e)+\tau(b^pJ(y_4^p)f);
\end{equation}
and similar to (\ref{isom-T2}), one gets
\begin{equation}\label{isom-T5}
  \big\|\big(T(g_n)f+T(h_n)e\big)_{n}\big\|^p_{L_p(\mathcal M;\ell_2^{r})}=\tau(b^pJ(y_3^p)f)+\tau(b^pJ(y_4^p)e).
\end{equation}
Together~\eqref{isom-T4},~\eqref{isom-T5} and~\eqref{p<2} with Proposition \eqref{isometry}(iv) and~\eqref{Mix-norm}, we get
\begin{equation*}
  \big\|\big(T(x_n)\big)_{n}\big\|_{L_p(\mathcal M;\ell_2^{rc})}^p\le \|(x_n)_{n}\|^p_{L_p(\mathcal M;\ell_2^{rc})}+\varepsilon.
\end{equation*}
By the arbitrariness of $\varepsilon$, we proved that $T$ extends to a contraction on $L_p(\M;\ell_2^{rc})$.

Finally, we assume that $T$ is positive. It is sufficient to consider the case $1<p<2$. 
The following facts are taken from \cite{HRW}.
Note that $T$ is a positive isometry. Then $T$ is Lamperti and $T=bJ$, where $b,J$ are defined in Lemma \ref{Jordan} (see \cite[Theorem 3.3 and Remark 3.9]{HRW}). Let $\mathcal{A}$ be the von Neumann algebra generated by $J(\mathcal M)$. By Lemma~\ref{Jordan}, we may decompose $\mathcal{A}=\mathcal{A}_1\oplus \mathcal{A}_2$, where $\mathcal{A}_1$ and $\mathcal{A}_2$ are two von Neumann subalgebras of $\mathcal{A}$, and write $J=J_1+J_2$ such that $J_1:\M\rightarrow \mathcal{A}_1$ is a normal $*$-homomorphism and $J_2:\M\rightarrow \mathcal{A}_2$ is a normal $*$-anti-homomorphism. Let $\sigma: \mathcal{A}_2\rightarrow\mathcal{A}_2^{op}$ be the usual opposite map and define
$$\Sigma:\mathcal{A}\rightarrow\mathcal{A}_1\oplus\mathcal{A}_2^{op},~\quad \Sigma=\mbox{Id}_{\mathcal{A}_1}\oplus\sigma.$$
Then $\Sigma\circ J$ is a normal $*$-homomorphism and $\Sigma(J(\M))$ is a von Neumann subalgebra of $\mathcal{A}_1\oplus\mathcal{A}_2^{op}$.
Define
$$\varphi:\Sigma(J(\M))_+\rightarrow[0,\8],\quad x\mapsto\tau(b^p\Sigma^{-1}x).$$
Then $\varphi$ becomes a normal semifinite trace on $\Sigma(J(\M))$. Let $L_p(\Sigma(J(\M)),\varphi)$ be the associated noncommutative $L_p$ space. Then $\Sigma\circ J$ extends to a positive surjective isometry
$$\tilde{J}:L_p(\M,\tau)\rightarrow L_p(\Sigma(J(\M)),\varphi),\quad x\mapsto\Sigma(J(x)),$$
whence $\tilde{J}^{-1}$ is well defined, positive and isometric on $L_p(\Sigma(J(\M)),\varphi)$. We refer to \cite[Proposition 5.3]{HRW} for the  detailed description of above facts.

As being an extension of the normal $*$-homomorphism $\Sigma\circ J$, it is easy to check that $\tilde{J}$ extends to an isometry from $L_p(\M,\tau;\ell_2^{rc})$ onto $L_p(\Sigma(J(\M)),\varphi;\ell_2^{rc})$. And thus $\tilde{J}^{-1}$ extends also to an isometry from $L_p(\Sigma(J(\M)),\varphi;\ell_2^{rc})$ to $L_p(\M,\tau;\ell_2^{rc})$.

To complete the argument,
there remains to verify that the embedding
$$L_p(\Sigma(J(\M)),\varphi;\ell_2^{rc})\rightarrow L_p(\M,\tau;\ell_2^{rc}),\quad (x_n)_n\mapsto(b\Sigma^{-1}x_n)_n$$
is an isometry. Let $p^\prime$ be the conjugate index of $p$, that is $\frac{1}{p}+\frac{1}{p^\prime}=1$. Then $2<p^\prime<\8$. Let $(x_n)$ be a finite sequence in $L_{p^\prime}(\Sigma(J(M)),\varphi;\ell_2^{rc})$. Write $x_n=(x_{n,1},x_{n,2})\in L_{p^\prime}(\mathcal A_1)\oplus L_{p^\prime}(\mathcal A_2^{op})$. Then
\begin{align*}
&\|(b^{p/p^\prime}\Sigma^{-1}x_n)_n\|^{p^\prime}_{L_{p^\prime}(\M,\tau;\ell_2^{c})}=\Big\|\Big(\sum_n|b^{p/p^\prime}\Sigma^{-1}x_n|^2\Big)^{1/2}
\Big\|^{p^\prime}_{L_{p^\prime}(\M,\tau)}\\
&=\tau\Big(b^{p}\Big(\sum_n\Sigma^{-1}(x_n^*)\Sigma^{-1}(x_n)\Big)^{p^\prime/2}\Big)=
\tau\Big(b^{p}\Big(\sum_n\Big(x_{n,1}^*x_{n,1}+\sigma^{-1}(x_{n,2}x^*_{n,2})\Big)\Big)^{p^\prime/2}\Big)\\
&=\tau\Big(b^{p}\Big(\sum_nx_{n,1}^*x_{n,1}+\sigma^{-1}\big(\sum_nx_{n,2}x^*_{n,2}\big)\Big)^{p^\prime/2}\Big)\\
&=\tau\Big(b^{p}\Big(\sum_nx_{n,1}^*x_{n,1}\Big)^{p^\prime/2}\Big)+\tau\Big(b^p\Big(\sigma^{-1}\big(\sum_nx_{n,2}x^*_{n,2}\big)\Big)^{p^\prime/2}\Big)\\
&=\|(x_{n,1})_n\|^{p^\prime}_{L_{p^\prime}(\Sigma(J(M)),\varphi;\ell_2^{c})}+\|(x_{n,2})_n\|^{p^\prime}_{L_{p^\prime}(\Sigma(J(M)),\varphi;\ell_2^{r})}.
\end{align*}
Note that $(b^{p/p^\prime}\Sigma^{-1}x_n)^*=b^{p/p^\prime}\Sigma^{-1}x_n^*$. Then replacing $b^{p/p^\prime}\Sigma^{-1}x_n$ by $(b^{p/p^\prime}\Sigma^{-1}x_n)^*$ in the above identities, we deduce that
$$\|(b^{p/p^\prime}\Sigma^{-1}x_n)_n\|^{p^\prime}_{L_{p^\prime}(\M,\tau;\ell_2^{r})}=
\|(x_{n,1})_n\|^{p^\prime}_{L_{p^\prime}(\Sigma(J(M)),\varphi;\ell_2^{r})}+\|(x_{n,2})_n\|^{p^\prime}_{L_{p^\prime}(\Sigma(J(M)),\varphi;\ell_2^{c})}.$$
Combining the above two identities, we have
\begin{align*}
  &\|(b^{p/p^\prime}\Sigma^{-1}x_n)_n\|^{p^\prime}_{L_{p^\prime}(\M,\tau;\ell_2^{c})}
  +\|(b^{p/p^\prime}\Sigma^{-1}x_n)_n\|^{p^\prime}_{L_{p^\prime}(\M,\tau;\ell_2^{r})}\\
  &=\|(x_{n,1})_n\|^{p^\prime}_{L_{p^\prime}(\Sigma(J(M)),\varphi;\ell_2^{r})}+\|(x_{n,2})_n\|^{p^\prime}_{L_{p^\prime}(\Sigma(J(M)),\varphi;\ell_2^{r})}\\
  &+\|(x_{n,1})_n\|^{p^\prime}_{L_{p^\prime}(\Sigma(J(M)),\varphi;\ell_2^{c})}+\|(x_{n,2})_n\|^{p^\prime}_{L_{p^\prime}(\Sigma(J(M)),\varphi;\ell_2^{c})}\\
  &=\|(x_n)_n\|^{p^\prime}_{L_{p^\prime}(\Sigma(J(M)),\varphi;\ell_2^{rc})}.
\end{align*}
Hence the map
$$\phi:L_{p^\prime}(\Sigma(J(M)),\varphi;\ell_2^{rc})\rightarrow L_{p^\prime}(\M,\tau;\ell_2^{rc}),\quad (x_n)_n\mapsto(b^{p/p^\prime}\Sigma^{-1}x_n)_n$$
is an isometry.

By Proposition~\ref{dual}, we know that $(L_p(\Sigma(J(M)),\varphi;\ell_2^{rc}))^*=L_{p^\prime}(\Sigma(J(M)),\varphi;\ell_2^{rc})$. Let $\phi^*$ be the conjugate of $\phi$. Then we can check that $  \phi^*((b\Sigma^{-1}x_n)_n)=(x_n)_n$ (see the proof of Proposition 5.3 in~\cite{HRW}).

Therefore, recalling that $T$ is a contraction on $L_p(\M,\tau;\ell_2^{rc})$ with $T=bJ$ and together with above observations, we finally get
\begin{align*}
  &\|(x_n)_n\|_{L_{p}(\Sigma(J(M)),\varphi;\ell_2^{rc})}=\|\phi^*((b\Sigma^{-1}x_n)_n)\|_{L_{p}(\Sigma(J(M)),\varphi;\ell_2^{rc})}\\
  &\le \|(b\Sigma^{-1}x_n)_n\|_{L_{p}(\M,\tau;\ell_2^{rc})}=\|(T\tilde{J}^{-1}x_n)_n\|_{L_{p}(\M,\tau;\ell_2^{rc})}\\
  &\le\|(\tilde{J}^{-1}x_n)_n\|_{L_{p}(\M,\tau;\ell_2^{rc})}=\|(x_n)_n\|_{L_{p}(\Sigma(J(M)),\varphi;\ell_2^{rc})},
\end{align*}
which gives $\|(x_n)_n\|_{L_{p}(\Sigma(J(M)),\varphi;\ell_2^{rc})}=\|(b\Sigma^{-1}x_n)_n\|_{L_{p}(\M,\tau;\ell_2^{rc})}$. So the proof of Proposition~\ref{extend of Lam} is complete.
\end{proof}

\begin{remark}{\rm
For a positive isometry $T$ on  $L_p(\M)$ ($1< p<2$), once one gets that it extends to a contraction on $L_p(\M;\ell_2^{rc})$, there is another way to conclude its isometry: For $T=bJ$, we consider another operator $T'(x)=b^{p-1}J(x)$ initially defined on $\mathcal S_\mathcal M$, which extends to an isometry on $L_{p'}(\mathcal M;\ell_2^{rc})$. Thus one may deduce that
$$\big|\sum_n\tau(b^pJ(x_n)J(y_n))\big|\leq \big\|(T(x_n))_n\|_{{L_{p}(\M,\tau;\ell_2^{rc})}}\|(y_n)_n\|_{L_{p'}(\M,\tau;\ell_2^{rc})}.$$
 But by commutation relations and the tracial property $$\tau(b^pJ(x_n)J(y_n)) = \tau(b^pJ(y_n)J(x_n)),$$ and thus $$2\tau(b^pJ(x_n)J(y_n)) =\tau(b^pJ(x_ny_n+y_nx_n)) = 2\tau(x_ny_n).$$ Therefore,
 $$\big|\sum_n\tau(x_ny_n)\big|\leq \big\|(T(x_n))_n\|_{{L_{p}(\M,\tau;\ell_2^{rc})}}\|(y_n)_n\|_{L_{p'}(\M,\tau;\ell_2^{rc})},$$
 which gives
 $$\big\|(x_n)_n\|_{{L_{p}(\M,\tau;\ell_2^{rc})}}\leq\big\|(T(x_n))_n\|_{{L_{p}(\M,\tau;\ell_2^{rc})}}.$$

 Unfortunately, at the moment of writing, we still have no idea on how to remove the positivity assumption in the case $1< p<2$.
}
\end{remark}


Based on Proposition~\ref{extend of Lam}, a similar Coifman-Weiss's transference principle as Proposition \ref{trans} holds also for isometries and we thus get the following quantitative mean ergodic theorem for isometries.
\begin{lem}\label{thm-iso}
Let $1< p<\infty$ and $T:L_p(\mathcal M)\rightarrow L_p(\mathcal M)$ be an isometry. Let $(n_{i})_{i\in\N}$ be any increasing sequence of positive integers. Then for $2\le p<\infty$,
$$\big\|\big(M_{n_{i}}(T)x-M_{n_{i+1}}(T)x\big)_{i\in \N}\big\|_{L_p(\M;\ell^{rc}_{2})}\lesssim\|x\|_{L_{p}(\M)},\forall x\in L_p(\M).$$
If $T$ is moreover positive, then the above inequality holds also for $1<p<2$.
\end{lem}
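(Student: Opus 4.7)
My plan is to replicate the Coifman--Weiss transference scheme used in the proof of Proposition~\ref{trans}, but feeding in the isometric extension of $T$ to $L_p(\mathcal M;\ell_2^{rc})$ provided by Proposition~\ref{extend of Lam} in place of the (merely power bounded) two-sided estimate available under~\eqref{con-1}. Once one has the isometric extension, the square function inequality Theorem~\ref{Th5} on $L_p(\mathcal N)=L_p(L_\infty(\Z)\overline{\otimes}\mathcal M)$ does the rest of the work.

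Concretely, I would fix $x\in L_p(\mathcal M)$, an integer $i_0\ge 1$, set $N=n_{i_0+1}$, and for a large $m>N$ define the $L_p(\mathcal M)$-valued function on $\Z$ by $f_m(l)=T^l x$ for $0\le l\le m+N$ and $f_m(l)=0$ otherwise. As in the proof of Proposition~\ref{trans}, for every $0\le k\le m$ and $0\le i\le i_0$,
$$T^k\bigl(M_{n_i}(T)x-M_{n_{i+1}}(T)x\bigr)=M'_{n_i}f_m(k)-M'_{n_{i+1}}f_m(k),$$
where $M'_n$ denotes the Hardy--Littlewood average on $\Z$. Under the hypotheses of the lemma, Proposition~\ref{extend of Lam} tells us that $T$ extends to an isometry on $L_p(\mathcal M;\ell_2^{rc})$, so $\|T^k y\|_{L_p(\mathcal M;\ell_2^{rc})}=\|y\|_{L_p(\mathcal M;\ell_2^{rc})}$ for every finite sequence $y$ and every $k\ge 0$. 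Averaging over $k$ and using the identification $L_p(\mathcal M\overline{\otimes}\ell_\infty(\Z);\ell_2^{rc})\approx\ell_p(\Z;L_p(\mathcal M;\ell_2^{rc}))$ provided by Proposition~\ref{nonkin}, this gives
\begin{align*}
(m+1)\,\bigl\|\bigl(M_{n_i}(T)x-M_{n_{i+1}}(T)x\bigr)_{0\le i\le i_0}\bigr\|^p_{L_p(\mathcal M;\ell_2^{rc})}
&=\sum_{k=0}^{m}\bigl\|\bigl(M'_{n_i}f_m(k)-M'_{n_{i+1}}f_m(k)\bigr)_{i}\bigr\|^p_{L_p(\mathcal M;\ell_2^{rc})}\\
&\le \bigl\|\bigl(M'_{n_i}f_m-M'_{n_{i+1}}f_m\bigr)_{i}\bigr\|^p_{L_p(\mathcal N;\ell_2^{rc})}.
\end{align*}
Applying Theorem~\ref{Th5}(iii) to the right-hand side and then invoking the isometry property of $T$ on $L_p(\mathcal M)$ to bound $\|f_m\|^p_{L_p(\mathcal N)}=\sum_{l=0}^{m+N}\|T^l x\|^p_{L_p(\mathcal M)}\le (m+N+1)\|x\|^p_{L_p(\mathcal M)}$, we obtain
$$\bigl\|\bigl(M_{n_i}(T)x-M_{n_{i+1}}(T)x\bigr)_{0\le i\le i_0}\bigr\|_{L_p(\mathcal M;\ell_2^{rc})}\lesssim \Bigl(\tfrac{m+N+1}{m+1}\Bigr)^{1/p}\|x\|_{L_p(\mathcal M)}.$$
Letting $m\to\infty$ with $i_0$ fixed, and then $i_0\to\infty$ with the standard approximation in Remark~\ref{app}, yields the desired bound.

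The main conceptual step, and the only place where the positivity assumption is used when $1<p<2$, is the appeal to Proposition~\ref{extend of Lam}: the transference produces a factor $\sum_{k=0}^{m}\|T^k y\|^p$ which needs to coincide (up to a constant independent of $m$) with $(m+1)\|y\|^p$ in the vector-valued norm $\|\cdot\|_{L_p(\mathcal M;\ell_2^{rc})}$. This is automatic once $T$ extends to an isometry on $L_p(\mathcal M;\ell_2^{rc})$, which Proposition~\ref{extend of Lam} provides for $2\le p<\infty$ unconditionally and for $1<p<2$ under the positivity of $T$. Without positivity in the range $1<p<2$ the extension is only a contraction, and the identity $\sum_{k=0}^m\|T^k y\|^p=(m+1)\|y\|^p$ fails in the direction that matters here, which is precisely why the lemma is stated with a positivity hypothesis in that range.
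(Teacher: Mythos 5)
Your proposal is correct and reproduces exactly the argument the paper intends (and explicitly omits, pointing only to the upgrade of the estimate \eqref{ergo32} in the proof of Proposition~\ref{trans} to an equality via Proposition~\ref{extend of Lam}). You correctly identify the key point: the transference requires the one-sided bound $\|y\|_{L_p(\M;\ell_2^{rc})}\le\|T^k y\|_{L_p(\M;\ell_2^{rc})}$, which fails for a mere contraction but is automatic for an isometric extension, and this is precisely why positivity is imposed for $1<p<2$.
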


Lemma \ref{thm-iso} can be proved in a similar way as Proposition \ref{trans} by observing that the ``$\lesssim$" in \eqref{ergo32} can be strengthened to ``$=$" due to Proposition~\ref{extend of Lam}. We omit the details.

By Lemma \ref{thm-iso}, we now can conclude the proof of Theorem \ref{thm-1}.
\begin{proof}[Proof of Theorem \ref{thm-1}]
Recall that $M_n(T)=\frac{1}{n+1}\sum_{k=0}^nT^k$. By the density argument, it is enough to prove the theorem for any finite positive sequence $(n_i)_{0\le i\le N}$.

Fix an arbitrary $N\geq1$. Note that $T\in \mathfrak{S}$. Then by definition, we can find $(T_j)\subseteq\mbox{conv}(\mathbb{SS}(L_p(\mathcal M)))$ such that $T_j$ converges to $T$ in the sense of strong operator topology. Moreover, according to Lemma~\ref{dilation}, for each $T_j$, there exist two contractions $Q_{j,N}$, $J_{j,N}$ and one isometry $U_{j,N}$ such that $T_j^{n_i}=Q_{j,N}U_{j,N}^{n_i}J_{j,N}$ for every $0\le n_i\le n_N$. As a consequence, by Proposition~\ref{exten-lin}, $Q_{j,N}$ and $J_{j,N}$ extend to two bounded operators on $L_p(\mathcal M;\ell_2^{rc})$.
Therefore, for any fixed $x\in L_p(\mathcal M)$, together with  Lemma~\ref{thm-iso}, we get
\begin{align*}
&\|\big(M_{n_{i}}(T_j)x-M_{n_{i+1}}(T_j)x\big)_{0\le i\le N}\|_{L_p(\mathcal M;\ell_2^{rc})}\\&=\|(Q_{j,N}(M_{n_i}-M_{n_{i+1}})(U_{j,N})J_{j,N}x)_{0\le i\le N}\|_{L_p(\mathcal M;\ell_2^{rc})}\\
&\le \|((M_{n_i}-M_{n_{i+1}})(U_{j,N})J_{j,N}x)_{0\le i\le N}\|_{L_p(\mathcal M;\ell_2^{rc})}\\
&\lesssim\|J_{j,N}x\|_{L_p(\mathcal M)}\lesssim\|x\|_{L_p(\mathcal M)}.
\end{align*}

Consequently, combined with the noncommutative Khintchine inequality in $L_p$ space-Proposition~\ref{nonkin},
we finally deduce that
\begin{align*}
&\|\big(M_{n_{i}}(T)x-M_{n_{i+1}}(T)x\big)_{0\le i\le N}\|_{L_p(\mathcal M;\ell_2^{rc})}\\
&\le \|\big(\big(M_{n_{i}}-M_{n_{i+1}}\big)T_jx)_{0\le i\le N}\|_{L_p(\mathcal M;\ell_2^{rc})}+\|\big(\big(M_{n_{i}}-M_{n_{i+1}}\big)(T-T_j)x\big))_{0\le i\le N}\|_{L_p(\mathcal M;\ell_2^{rc})}\\
&\lesssim\|x\|_{L_p(\mathcal M)}+\|\big(\big(M_{n_{i}}-M_{n_{i+1}}\big)(T-T_j)x\big))_{0\le i\le N}\|_{L_p(\mathcal M;\ell_2^{rc})}\\
&\approx\|x\|_{L_p(\mathcal M)}+\bigg\|\sum_{i=0}^N\varepsilon_{i}\big(M_{n_{i}}-M_{n_{i+1}}\big)(T-T_j)x\bigg\|_{L_p(\Omega;L_p(\mathcal M))}\\
&\lesssim\|x\|_{L_p(\mathcal M)}+\sum_{i=0}^N\|\big(M_{n_{i}}-M_{n_{i+1}}\big)(T-T_j)x\|_{L_p(\mathcal M)},
\end{align*}
By letting $j\rightarrow\infty$, we obtain the desired conclusion.
\end{proof}

\begin{remark}\rm
Let $1<p<\8$ and let $(\Omega,\mu)$ be a $\sigma$-finite measure space. Suppose that $T:L_p(\Omega)\rightarrow L_p(\Omega)$ is a positive contraction. Define $\widetilde{T}=T\otimes I_{L_p(\M)}$.   By the Akcoglu dilation theorem (cf.~\cite{A}) and Theorem~\ref{thm-1}, we immediately obtain the quantitative mean ergodic theorem for $\widetilde{T}$, namely
$$\big\|\big(M_{n_{i}}(\widetilde{T})x-M_{n_{i+1}}(\widetilde{T})x\big)_{i\in \N}\big\|_{L_p(L_\infty(\Omega)\overline{\otimes}\M;\ell^{rc}_{2})}\lesssim \|x\|_{p}, ~\forall\ x\in L_p(L_\infty(\Omega)\overline{\otimes}\M;\ell_2^{rc}),$$
where $(n_{i})_{i\in\N}$ is any increasing sequence of positive integers.

\end{remark}


\end{document}